\documentclass{amsart}

\usepackage{amsmath,amssymb,amsthm}

\usepackage{booktabs}
\usepackage[colorlinks=true,linkcolor=blue,citecolor=blue,urlcolor=blue]{hyperref}

\usepackage[backrefs,msc-links]{amsrefs}

\usepackage[capitalise,noabbrev,nameinlink]{cleveref}

\providecommand{\Pic}{\operatorname{Pic}}
\providecommand{\Sym}{\operatorname{Sym}}
 
\providecommand{\F}{\mathbb F}
\providecommand{\Z}{\mathbb Z}

\providecommand{\PP}{\mathbb P}
\providecommand{\A}{\mathbb A}
\providecommand{\Aut}{\operatorname{Aut}}
\providecommand{\PAut}{\operatorname{PAut}}

\providecommand{\Hull}{\operatorname{Hull}}
\providecommand{\Supp}{\operatorname{Supp}}
\providecommand{\Proj}{\operatorname{Proj}}
\providecommand{\Res}{\operatorname{Res}}
\providecommand{\ev}{\operatorname{ev}}
\providecommand{\wt}{\operatorname{wt}}

\providecommand{\dv}{\operatorname{div}}
\providecommand{\Ram}{\operatorname{Ram}}
\providecommand{\XX}{\mathcal X}
\providecommand{\CC}{\mathcal C}
\providecommand{\YY}{\mathcal Y}

\providecommand{\bw}{\mathbf w}

\newcommand{\Pw}{\PP^2_{\bw}}

\theoremstyle{plain}
\newtheorem{thm}{Theorem}[section]
\newtheorem{prop}[thm]{Proposition}
\newtheorem{lem}[thm]{Lemma}
\newtheorem{cor}[thm]{Corollary}
\newtheorem{conj}[thm]{Conjecture}

\theoremstyle{definition}
\newtheorem{defn}[thm]{Definition}
\newtheorem{prob}[thm]{Problem}
\newtheorem{exa}[thm]{Example}
\newtheorem{rem}[thm]{Remark}

\crefname{thm}{Thm.}{Theorems}
\crefname{prop}{Prop.}{Propositions}
\crefname{lem}{Lem.}{Lemmas}
\crefname{cor}{Cor.}{Corollaries}
\crefname{conj}{Conj.}{Conjectures}
\crefname{defn}{Def.}{Definitions}
\crefname{exa}{Exa.}{Examples}
\crefname{rem}{Rem.}{Remarks}
\Crefname{rem}{Remark}{Remarks}
\crefname{equation}{Eq.}{Eqs.}

 \usepackage[scale=.85]{geometry}

\begin{document}

%   front 

\title{Hulls, linear equivalence, and weighted superelliptic codes}

\author[J. Mezinaj]{J. Mezinaj}
\address{Department of Mathematics and Statistics, Oakland University, Rochester, MI 48309}

\author[T. Shaska]{T. Shaska}
\address{Department of Computer Science \& Engineering, Oakland University, Rochester, MI 48309} 

\subjclass[2020]{Primary 94B27, 14G50; Secondary 14C20, 14H51, 14M25, 11T71}

\keywords{algebraic geometry codes, hulls, special divisors, linear equivalence, Picard group, superelliptic curves, weighted projective spaces}

\begin{abstract}
The containment of the code of the meet $G \wedge A$ in the hull, and the identity $G \vee A - D = K - G \wedge A$ exchanging meet and join, are known, and imposing that $G \wedge A$ be principal is how algebraic geometry codes with one-dimensional hull are built. We turn that construction into a measurement. For arbitrary divisors $G$ and $A$ we compute $C_L (D,G) \cap C_L (D,A)$ exactly: it is the code of the meet together with an excess $\varepsilon (G,A)$, canonically the quotient of the two and a subquotient of $H^1 ( \mathcal O ( G \wedge A ) )$. So $\varepsilon$ vanishes exactly when the meet is non-special, and where it does not vanish it certifies that $K - G \wedge A$ is linearly equivalent to an effective divisor, at degree zero the vanishing of a single class in the Picard group: the hull detects a linear equivalence rather than being built from one. Superelliptic curves $y^n = f(x)$ are where both sides can be computed, their weighted plane models in $\PP^2_{(1, n/c, d/c)}$, $c = \gcd (n,d)$, identifying the codes $\CC_s$ of weighted forms of degree $s$ with those of $s D_\infty$ and turning hulls into lattice counts. The range on which $\varepsilon$ is blind is an explicit interval of degrees, on which $\dim \Hull ( \CC_s ) = c \, \mu (s) - n \delta + 1 - g_\XX$, $\mu (s) = \min \{ s, M-s \}$, depends on the curve only through its number of affine rational points. Outside it the meet and the join are invariant under $s \mapsto M-s$ while $\varepsilon$ is not, so every asymmetry of the hull profile is excess and the threshold in $s$ is finer than the divisor class: two totally split curves of genus two, over $\F_7$ and over $\F_{11}$, present the same class at the same pair of degrees and are separated by the profile alone. Where $0 \leq \deg ( G \wedge A ) \leq 2 g_\XX - 2$ the hull is at most $g_\XX + 1$, so it is large only where it is blind, and over a prime field, under one explicit inequality on $(n,d,q)$, its maximum over the family is $\ell \left( \lfloor M/2 \rfloor D_\infty \right)$, attained exactly on the totally split locus.
\end{abstract}

\maketitle

\setcounter{tocdepth}{1}
\tableofcontents

%--------------------------------------------------------------

%-------------------------------------------------------------------------------------------------------------------
\section{Introduction}\label{sec-intro}

A hull is usually something one arranges. To build an algebraic geometry code with a prescribed hull one chooses the divisor so that the answer is forced in advance: a non-special divisor of small degree makes the hull the code of a meet \cite{ca-lo-ma, me-ta-qi}, a principal meet makes it one-dimensional \cite{sok}. The geometry is spent before the computation begins, and what comes out was put in. This paper reverses the arrangement. We compute the hull of an evaluation code under no hypotheses at all, and the part of the answer that is not forced by degrees turns out to be a linear equivalence on the curve.

The mechanism is one identity. Let $C = C_L (D,G)$ have dual $C_L (D,A)$, where $A = D - G + K$ for the canonical divisor attached to the duality. Then the meet and the join of the pair satisfy
\(
G \vee A - D \; = \; K - ( G \wedge A ) ,
\)
an equality of divisors and not of classes. Now the intersection of two evaluation codes always contains the code of their meet, and \cref{thm-meet} computes by how much it exceeds it: by a space of functions with poles bounded by the join, vanishing on $D$, and decomposable along $G$ and $A$. We call its dimension the excess. The identity puts that space inside $H^1$ of the meet, and two complementary consequences follow. Where the meet is non-special the excess vanishes, so the hull is a Riemann-Roch quantity, fixed by a degree, and it knows nothing further. Where the meet is special the excess may survive, and if it does it forces $K - G \wedge A$ to be linearly equivalent to an effective divisor; for a meet of degree $2 g_\XX - 2$ this is the vanishing of a single class in $\Pic ( \XX )$. The hull is blind precisely on the range where the classical constructions operate, and it sees precisely where they do not.

To see it one needs a family in which both sides can be computed, and superelliptic curves $\XX : y^n = f(x)$ are such a family. Their weighted plane models identify the codes $\CC_s$ cut out by weighted forms of degree $s$, which are the weighted projective Reed-Muller codes of \cite{au-etal} restricted to $\XX$, with the algebraic geometry codes of $s D_\infty$, and turn every Riemann-Roch dimension in sight into a count of lattice points in a triangle. The four regimes then become explicit intervals of degrees. On the blind one the hull is $c \, \mu (s) - n \delta + 1 - g_\XX$, a tent function of $s = \mu (s)$ reflected about a midpoint determined by $(n,d,q)$, depending on the curve through the number of its affine rational points and through nothing else. This is at once the most concrete formula in the paper and the statement that on that range the method has nothing to say.

Outside it, what the hull sees is finer than a divisor class. The meet and the join are unchanged by $s \mapsto M - s$, so the Riemann-Roch side of the count is symmetric about the midpoint and every asymmetry of the profile $s \mapsto \dim \Hull ( \CC_s )$ is excess, readable from a rank computation with no geometry at all. Yet the two symmetric degrees present the same class to be tested and need not return the same verdict, because the excess depends on the two Riemann-Roch spaces separately and not on their meet and join. On a totally split curve of genus two over $\F_7$ the degrees $s = 1$ and $s = 4$ both present $\Ram + 4 D_\infty - D$, of degree zero; the excess is $0$ at the first and $1$ at the second, and the second value certifies
\(
\Ram + 4 D_\infty \; \sim \; D .
\)
On this family that equivalence is automatic, so what the two degrees separate is not the class but the decomposability of a generator of $L ( K - G \wedge A )$ along the two divisors. A companion curve over $\F_{11}$, optimal and equally split, presents the same class at the corresponding pair of degrees and has a symmetric profile. No invariant of the numerical data $(n,d,q,\delta)$ separates the two.

Two further statements concern the size of the hull rather than what it detects, and they bound the phenomenon from opposite sides. Clifford's theorem applied to the meet and to its complement gives $\dim \Hull \leq g_\XX + 1$ whenever the meet has degree between $0$ and $2 g_\XX - 2$, so a hull is large only where it is blind and is small wherever it detects. The two mechanisms that do make it large are disjoint. One is the window, widest on the totally split locus, where the hull grows linearly in the degree; the other is a stratum on which a finite abelian group acts regularly on the evaluation points, so that the code is an ideal of the group algebra and the hull is measured by the failure of a defining set to be symmetric, a computation with no geometry in it. The window of such a stratum is empty. Over a prime field, under one explicit inequality between $\lfloor M/2 \rfloor$ and $(n-1)d$, the first mechanism dominates and the maximum of $\dim \Hull ( \CC_s )$ over the whole family is $\ell \left( \lfloor M/2 \rfloor D_\infty \right)$, attained exactly on the totally split locus; without it the two conspire, and we exhibit a curve over $\F_{11}$ on which the second exceeds the first by eleven.

A last section removes the hypothesis that $f$ have no rational root, at the opposite extreme from the one in force throughout, where $f$ splits completely over $\F_q$. The ramification then migrates from the join to the meet, every count of \cref{sec-hull} survives with $n \delta$ replaced by $d + n \delta$, and the order relation between the two divisors reverses, so that the upper half of the degree range is dual-containing rather than the lower half self-orthogonal. The Hermitian curve and its cyclic quotients, which the standing hypotheses exclude, enter the family there.

What is missing is a converse. A nonvanishing excess certifies the equivalence, but the equivalence does not certify a nonvanishing excess: the generator must also be decomposable, and the excess is in general strictly smaller than the bound $\ell ( K - G \wedge A )$ that contains it. Which elements are decomposable is the question this paper opens and does not close, and every statement we leave conjectural is a form of it, including the assertion that the whole profile $s \mapsto \varepsilon ( G_s, A_s )$ determines the weighted moduli point of $\XX$. The dimension of the hull is also the parameter governing the support splitting algorithm \cite{se-ssa} and the entanglement cost of the quantum code built from $C$ \cite{gu-ji-gu}; the end of \cref{sec-comput} records what the computations below say about both.

Throughout, $q$ is a power of a prime $p$, $\F_q$ is the field with $q$ elements, and $\overline{\F}_q$ is a fixed algebraic closure. All curves are smooth, projective and geometrically irreducible unless stated otherwise, and for a curve $\XX$ over $\F_q$ we write $\F_q (\XX)$ for its function field, $g_\XX$ for its genus, and $L(G) = \{ h \in \F_q (\XX)^* : \dv (h) + G \geq 0 \} \cup \{ 0 \}$ and $\ell (G) = \dim_{\F_q} L(G)$ for the Riemann-Roch space of a divisor $G$ and its dimension.

%--------------------------------------------------------------
\section{Preliminaries}\label{sec-prelim}

%\subsection{Weighted projective spaces}\label{ssec-wps}

Let $\bw = (w_0, \dots , w_m)$ be a tuple of positive integers and let $S = \F_q [ x_0, \dots , x_m ]$ be graded by $\wt (x_i) = w_i$, so that a monomial $x_0^{a_0} \cdots x_m^{a_m}$ has weighted degree $\sum_i a_i w_i$ and $S = \bigoplus_{s \geq 0} S_s$, where $S_s$ is the span of the monomials of weighted degree $s$. A polynomial $F \in S_s$ is called weighted homogeneous of degree $s$; equivalently
\[
F ( \lambda^{w_0} x_0, \dots , \lambda^{w_m} x_m ) = \lambda^s \, F ( x_0, \dots , x_m ), \qquad \lambda \in \overline{\F}_q^{\, *}.
\]

\begin{defn}\label{def-wps}
The weighted projective space with weights $\bw$ is $\PP^m_{\bw} = \Proj S$. Its $\overline{\F}_q$-points are the orbits of $\overline{\F}_q^{\, m+1} \setminus \{ 0 \}$ under the action
\[
\lambda \cdot ( x_0, \dots , x_m ) = ( \lambda^{w_0} x_0, \dots , \lambda^{w_m} x_m ), \qquad \lambda \in \overline{\F}_q^{\, *},
\]
and we write $[ x_0 : \cdots : x_m ]$ for the class of $(x_0, \dots , x_m)$. A point is $\F_q$-rational if it is fixed by the Frobenius, and $\PP^m_{\bw} ( \F_q )$ denotes the set of such points.
\end{defn}

The tuple $\bw$ is called well formed if $\gcd ( w_0, \dots , \widehat{w_i}, \dots , w_m ) = 1$ for every $i$. Every weighted projective space is isomorphic to one with well formed weights, and in the case $m = 2$ one has $\PP^2_{(1,a,b)} \cong \PP^2_{(1, ac, bc)}$ for every $c \geq 1$; we assume from now on that all weight systems are well formed. The space $\PP^m_{\bw}$ is the disjoint union of the locally closed strata
\[
U_i = \{ [x_0 : \cdots : x_m] : x_0 = \cdots = x_{i-1} = 0, \; x_i \neq 0 \}, \qquad 0 \leq i \leq m,
\]
and it is a normal projective variety whose singular locus is contained in the union of the strata $U_i$ with $w_i > 1$. We write $( \PP^m_{\bw} )^{\mathrm{sm}}$ for the smooth locus. For the arithmetic of weighted projective spaces over finite fields and for heights on them we refer to \cite{au-etal, sa-sh-h}.

\begin{defn}\label{def-wcurve}
A weighted plane curve of degree $e$ is the zero locus $\XX = V(F) \subseteq \PP^2_{\bw}$ of a weighted homogeneous polynomial $F \in S_e$, $S = \F_q [x_0, x_1, x_2]$, which is geometrically irreducible and reduced. Its homogeneous coordinate ring is $S ( \XX ) = S / (F)$, graded by weighted degree.
\end{defn}

%\subsection{Superelliptic curves}\label{ssec-superell}

\begin{defn}\label{def-superell}
Let $n \geq 2$ and $d \geq 3$ be integers with $p \nmid n$, and let $f \in \F_q [x]$ be a separable polynomial of degree $d$. The superelliptic curve of level $n$ attached to $f$ is the smooth projective model $\XX$ of the affine plane curve
\begin{equation}\label{eq-superell}
y^n = f(x).
\end{equation}
We write $\pi : \XX \to \PP^1$, $(x,y) \mapsto x$, for the degree $n$ cyclic covering determined by \cref{eq-superell}, and
\[
\tau : \XX \to \XX, \qquad \tau (x,y) = ( x, \zeta_n y ),
\]
for the superelliptic automorphism, where $\zeta_n \in \overline{\F}_q$ is a fixed primitive $n$-th root of unity.
\end{defn}

Throughout this paper we denote
\(
c = \gcd (n, d)\) , \( \bw = \left( 1, \tfrac{n}{c}, \tfrac{d}{c} \right)\), and \( e = \tfrac{nd}{c}\).
The covering $\pi$ is totally ramified over each of the $d$ roots of $f$, and the fibre over $x = \infty$ consists of $c$ points, each with ramification index $n/c$. We denote by 
\(
\Ram = \sum_{f (a) = 0} Q_a\), and \( D_\infty = \sum_{i=1}^{c} P_i\), 
for the reduced ramification divisor over the roots of $f$ and the reduced divisor at infinity. 

\begin{lem}\label{lem-basic}
With the notation above the genus of $\XX$ is given by 
\begin{equation}\label{eq-genus}
g_\XX = \frac{(n-1)(d-1) + 1 - c}{2},
\end{equation}
and, in $\mathrm{Div} ( \XX )$,
\begin{equation}\label{eq-divisors}
\begin{split}
\dv (x - a) &= n \, Q_a - \tfrac{n}{c} D_\infty \quad \text{for } f(a) = 0, \\
\dv (y) &= \Ram - \tfrac{d}{c} D_\infty, \\
\dv (dx) &= (n-1) \Ram - \left( \tfrac{n}{c} + 1 \right) D_\infty.
\end{split}
\end{equation}
\end{lem}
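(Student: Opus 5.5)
The plan is to read off the local structure of $\pi$ first, and then to get the genus from Riemann--Hurwitz and the three divisors by computing valuations place by place.

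\textbf{Local structure of $\pi$.} The extension $\F_q(\XX)/\F_q(x)$ is a Kummer extension of degree $n$ with $p \nmid n$, so all ramification is tame. For a place of $\PP^1$ at $x = b$ the ramification index of $\pi$ is $n/\gcd(n, v_b(f))$, and at infinity it is $n/\gcd(n, v_\infty(f)) = n/\gcd(n,d) = n/c$.
\begin{itemize}
\item Since $f$ is separable, $v_a(f) = 1$ at each root $a$. Such a place is therefore totally ramified, with a single point $Q_a$ above it.
\item At every other finite place $v_b(f) = 0$, so the place is unramified.
\item At infinity $v_\infty(f) = -d$. There are $c$ points $P_i$, each with index $n/c$; this count is stated in the setup and can be confirmed by $e \cdot r = n$ in the Kummer extension.
\end{itemize}

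\textbf{Genus.} Tame Riemann--Hurwitz for $\pi$ gives
\[
2g_\XX - 2 = n(-2) + d(n-1) + c\left(\tfrac{n}{c} - 1\right).
\]
Rearranging yields $2g_\XX = (n-1)(d-1) + 1 - c$, which is \cref{eq-genus}.

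\textbf{The divisors in \cref{eq-divisors}.} Each is computed from the valuations of $x$ and $y$.
\begin{itemize}
\item For $x - a$: on $\PP^1$ this function has a simple zero at $a$ and a simple pole at $\infty$. Pulling back multiplies each by the ramification index, giving $nQ_a - \tfrac{n}{c}D_\infty$.
\item For $y$: from $y^n = f(x)$ we get $n\,v_Q(y) = v_Q(f)$ at every point $Q$. At $Q_a$ this gives $n\,v(y) = n \cdot 1$, so $v(y) = 1$. At $P_i$ it gives $n\,v(y) = \tfrac{n}{c}(-d)$, so $v(y) = -d/c$. Elsewhere $v(y) = 0$. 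Hence $\dv(y) = \Ram - \tfrac{d}{c}D_\infty$.
\item For $dx$: the tame differential formula is $\dv(\pi^* dx) = \pi^*\dv_{\PP^1}(dx) + \sum_Q (e_Q - 1)Q$. Since $\dv_{\PP^1}(dx) = -2\infty$, this becomes
\[
-\tfrac{2n}{c}D_\infty + (n-1)\Ram + \left(\tfrac{n}{c} - 1\right)D_\infty = (n-1)\Ram - \left(\tfrac{n}{c} + 1\right)D_\infty .
\]
\end{itemize}

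\textbf{Check and main obstacle.} As a consistency check, the degree of $\dv(dx)$ is $(n-1)d - n - c = 2g_\XX - 2$, as it must be. The only delicate step is the structure of the fibre over $\infty$. I would justify it via the local Kummer analysis: $f \sim x^d$ at infinity, and adjoining an $n$-th root of $x^{-d}$ (a uniformizer power) yields $c$ places of index $n/c$. Because everything over $\overline{\F}_q$ is tame and all computations are geometric, no rationality issues arise.
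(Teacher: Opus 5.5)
Your proposal is correct and takes essentially the same approach as the paper. The paper cites Stichtenoth's Kummer-extension proposition for the genus and ramification data, which is the tame Riemann--Hurwitz computation you carry out explicitly, and it derives the three divisors just as you do: from $\pi^*(\infty) = \tfrac{n}{c} D_\infty$, by taking divisors in $y^n = f(x)$, and from $\dv(dx) = \pi^*\dv_{\PP^1}(dx) + \mathfrak{d}$ with the same different $\mathfrak{d} = (n-1)\Ram + (\tfrac{n}{c}-1)D_\infty$.
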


\begin{proof}
The extension $\F_q ( \XX ) / \F_q (x)$ is the Kummer extension defined by \cref{eq-superell}, and \cref{eq-genus} together with the ramification data recorded above is \cite[Prop.~3.7.3]{st}. The three identities in \cref{eq-divisors} follow from that data: the first because $Q_a$ is the unique point over $a$ and $(x)_\infty = \pi^* ( \infty ) = \tfrac{n}{c} D_\infty$, the second by taking divisors in \cref{eq-superell}, and the third from $\dv (dx) = \pi^* \dv_{\PP^1} (dx) + \mathfrak{d}$ with different $\mathfrak{d} = (n-1) \Ram + ( \tfrac{n}{c} - 1 ) D_\infty$.
\end{proof}

The subgroup $\langle \tau \rangle \cong \Z / n \Z$ is normal in the subgroup of $\Aut ( \XX )$ preserving the fibration $\pi$, and the reduced automorphism group $\overline{\Aut} ( \XX ) = \Aut ( \XX ) / \langle \tau \rangle$ embeds in $\mathrm{PGL}_2 ( \overline{\F}_q )$ as the group of fractional linear transformations preserving the branch locus of $\pi$. The groups occurring as $\Aut ( \XX )$, together with equations for the corresponding families, are classified in \cite{sa, sa-sh} in every characteristic, and the loci they determine in the moduli space are studied in \cite{mssv}.

%\subsection{The weighted model and the bridge lemma}\label{ssec-model}
Write $f(x) = \sum_{i=0}^{d} a_i x^i$ and let $S = \F_q [ z, x, y ]$ be graded by $\wt (z) = 1$, $\wt (x) = n/c$, $\wt (y) = d/c$. The polynomial
\begin{equation}\label{eq-weighted-model}
F ( z, x, y ) = y^n - \sum_{i=0}^{d} a_i \, x^i z^{\, n (d-i) / c}
\end{equation}
is weighted homogeneous of degree $e = nd/c$, and $\XX \cong V(F) \subseteq \Pw$; the affine equation \cref{eq-superell} is recovered by setting $z = 1$, and the points of $V(F)$ with $z = 0$ are the points of $D_\infty$. We refer to $V(F) \subseteq \Pw$ as the weighted model of $\XX$. For $n = 2$ and $d = 2g+2$ one has $c = 2$ and $\bw = (1,1,g+1)$, the classical weighted model of a hyperelliptic curve.

The following lemma is the mechanism behind everything that follows. It identifies the weighted grading with the filtration by pole order at infinity, and simultaneously with the eigenspace decomposition under $\tau$.

\begin{lem}\label{lem-bridge}
Let $A = \F_q [x,y] / ( y^n - f (x) )$ and let $s \geq 0$. Then:
\begin{enumerate}
\item $A$ is the integral closure of $\F_q [x]$ in $\F_q (\XX)$, and $A = \bigcup_{s \geq 0} L ( s D_\infty )$;
\item the set
\(
B_s = \left\{ \, x^i y^j \; : \; i \geq 0, \; 0 \leq j \leq n-1, \; \tfrac{n}{c} i + \tfrac{d}{c} j \leq s \, \right\}
\)
is an $\F_q$-basis of $L ( s D_\infty )$, and dehomogenisation at $z = 1$ induces an isomorphism of $\F_q$-vector spaces
\(
S ( \XX )_s \; \xrightarrow{\ \sim \ } \; L ( s D_\infty );
\)
\item consequently
\begin{equation}\label{eq-ell}
\ell ( s D_\infty ) = \sum_{j=0}^{n-1} \max \left\{ 0, \; \left\lfloor \frac{cs - dj}{n} \right\rfloor + 1 \right\}.
\end{equation}
\end{enumerate}
\end{lem}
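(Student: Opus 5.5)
The argument runs through a pole-order computation at $D_\infty$ for monomials $x^i y^j$, and the key input is that the $n$ exponents $j \in \{0,\dots,n-1\}$ give distinct residues modulo $n/c$.

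\emph{Part (1).} The extension $\F_q(\XX)/\F_q(x)$ is Kummer of degree $n$, and $f$ is separable with $p \nmid n$. The discriminant of $y^n - f(x)$ over $\F_q[x]$ is, up to a unit, $n^n f^{n-1}$. At each root $a$ of $f$ the polynomial $y^n - f(x)$ is Eisenstein at $x-a$, because $f$ is separable. So $A_{(x-a)}$ is regular at the point $Q_a$, and at every other finite point $A$ is étale over $\F_q[x]$. Hence $A$ is regular, hence normal, and it is the integral closure of $\F_q[x]$. The affine points of $\XX$ are exactly those not in $D_\infty$. The functions regular away from $D_\infty$ are therefore the integral closure of $\F_q[x]$, which gives $A = \bigcup_s L(sD_\infty)$.

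\emph{Part (2).} By \cref{lem-basic}, at each $P_i$ we have $v_{P_i}(x) = -n/c$ and $v_{P_i}(y) = -d/c$, so $x^i y^j$ has pole order exactly $\tfrac{n}{c}i + \tfrac{d}{c}j$ at every $P_i$. Since $\gcd(n/c, d/c) = 1$, the values $\tfrac{d}{c}j$ for $0 \le j \le n-1$ fall into residue classes modulo $n/c$, each class containing $c$ of them. So distinct pole orders at a single point $P_1$ do not separate all monomials, and this is the main obstacle.

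To handle it I will instead use that $A$ is free over $\F_q[x]$ with basis $1, y, \dots, y^{n-1}$. An element $h = \sum_j h_j(x) y^j$ lies in $L(sD_\infty)$ if and only if each term $h_j y^j$ does. I will prove this with the $\tau$-eigenspace decomposition. The terms $h_j y^j$ lie in distinct $\zeta_n^j$-eigenspaces, these are defined over $\overline{\F}_q$, and the projectors $\tfrac1n\sum_k \zeta^{-jk}\tau^k$ make sense since $p\nmid n$. Because $\tau$ fixes $D_\infty$ as a divisor, each projector preserves $L(sD_\infty)\otimes\overline{\F}_q$. Each term $h_j y^j$ has pole order $\tfrac nc\deg h_j + \tfrac dc j$ at each $P_i$, which gives $B_s$ as a basis.

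For the isomorphism, the monomials $z^k x^i y^j$ with $j<n$ form a basis of $S(\XX)_s$, because $F$ is monic in $y$ of degree $n$, so division by $F$ reduces $y$-degrees. These monomials have weight $k + \tfrac nc i + \tfrac dc j = s$ with $k \ge 0$. Setting $z=1$ sends this basis bijectively onto $B_s$.

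\emph{Part (3).} For fixed $j$, the admissible $i$ are $0 \le i \le \lfloor (cs - dj)/n \rfloor$, and this range is empty when the bound is negative. Summing over $j$ gives \cref{eq-ell}.
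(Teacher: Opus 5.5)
Your proposal is correct and follows the paper's proof. Parts (2) and (3) match it exactly: the $\tau$-eigencomponent argument for $h=\sum_j h_j(x)y^j$, the reduction of $y$-exponents modulo $F$ for the isomorphism, and the count of $B_s$. The only difference is part (1), where you argue directly via the discriminant $\pm n^n f^{n-1}$ and Eisenstein's criterion, while the paper simply cites \cite[Prop.~3.7.3]{st}.

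In that Eisenstein step, localise at the irreducible factors of $f$ over $\F_q$, each occurring to the first power by separability, rather than at $x-a$ for a root $a$. The roots need not lie in $\F_q$ (under (H1) none do). Alternatively, run the argument over $\overline{\F}_q$ and descend regularity.
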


\begin{proof}
Part (1) and the fact that the monomials $x^i y^j$ with $0 \leq j \leq n-1$ form a basis of $A$ as an $\F_q [x]$-module are the standard description of the integral closure in a Kummer extension, \cite[Prop.~3.7.3]{st}. By \cref{lem-basic} the function $x^i y^j$ is regular on the affine part and has polar divisor $( \tfrac{n}{c} i + \tfrac{d}{c} j ) D_\infty$, so $B_s \subseteq L ( s D_\infty )$. Conversely let $h = \sum_{j=0}^{n-1} h_j (x) y^j \in L ( s D_\infty )$ with $h_j \in \F_q [x]$; since $\tau$ permutes the points of $D_\infty$ the space $L ( s D_\infty )$ is $\tau$-stable, and over $\F_q ( \zeta_n )$ the term $h_j (x) y^j$ is the $\zeta_n^j$-eigencomponent of $h$, so it lies in $L ( s D_\infty )$ separately and $\tfrac{n}{c} \deg h_j + \tfrac{d}{c} j \leq s$. This gives (2) for $L ( s D_\infty )$, and the isomorphism because a monomial $z^k x^i y^j$ of weighted degree $s$ has $k = s - \tfrac{n}{c} i - \tfrac{d}{c} j \geq 0$ and is sent to $x^i y^j$, while $F$ reduces the exponent of $y$ modulo $n$. Part (3) is the count of $B_s$.
\end{proof}

Formula \cref{eq-ell} is a quasi-polynomial in $s$ of quasi-period dividing $n$, agreeing with $cs + 1 - g_\XX$ once $cs > 2 g_\XX - 2$, as it must by Riemann-Roch. The identification $S ( \XX )_s \cong L ( s D_\infty )$ is what makes $\{ \CC_s \}_s$ a family of algebraic geometry codes with a single, uniformly described sequence of divisors rather than a collection of unrelated codes. For $c = 1$ the divisor $D_\infty$ is a single rational point, and \cref{eq-ell} shows that its Weierstrass semigroup is the numerical semigroup $\langle n, d \rangle$ generated by the two weights, of Frobenius number $nd - n - d = 2 g_\XX - 1$; the curve is then a $C_{a,b}$ curve in the sense of \cite{sh-wa}.

%\subsection{Evaluation codes and hulls}\label{ssec-codes}

Let $\XX$ be a smooth projective geometrically irreducible curve over $\F_q$, let $D = P_1 + \cdots + P_N$ be a reduced divisor of distinct $\F_q$-rational points, and let $G$ be a divisor with $\Supp G \cap \Supp D = \emptyset$. The evaluation map
\[
\ev : L (G) \longrightarrow \F_q^N , \qquad \ev (h) = \left( h (P_1), \dots , h (P_N) \right)
\]
is then defined, and $C_L (D,G) = \ev \, L (G)$ is the algebraic geometry code attached to $D$ and $G$, of dimension $\ell (G) - \ell (G-D)$. In the superelliptic case we take the following evaluation set.

Let $\YY = \{ P_1, \dots , P_N \} \subseteq \XX ( \F_q ) \cap ( \Pw )^{\mathrm{sm}}$ be a set of $\F_q$-rational points of $\XX$ with $\YY \cap \Supp D_\infty = \emptyset$, and put $D = \sum_{P \in \YY} P$.

\begin{defn}\label{def-code}
For $s \geq 1$ the \textbf{weighted superelliptic code} of degree $s$ is the image $\CC_s = \CC_s ( \XX, \YY, \bw )$ of the evaluation map
\[
\ev_s : S ( \XX )_s \longrightarrow \F_q^N, \qquad \ev_s ( F ) = \left( \frac{F (P_1)}{z (P_1)^s}, \dots , \frac{F(P_N)}{z(P_N)^s} \right).
\]
\end{defn}

The normalization by $z^s$ is the one used for weighted projective Reed-Muller codes in \cite{au-etal}; it is well defined because $z$ has weight one and does not vanish on $\YY$. By \cref{lem-bridge} we have, for every $s$,
\begin{equation}\label{eq-identification}
\CC_s = C_L ( D, s D_\infty ),
\end{equation}
the algebraic geometry code attached to the divisors $D$ and $s D_\infty$, and $\dim \CC_s = \ell ( s D_\infty ) - \ell ( s D_\infty - D )$.

\begin{defn}\label{def-hull}
The \textbf{hull of a linear code} $C \subseteq \F_q^N$ is $\Hull (C) = C \cap C^\perp$, where $C^\perp$ is the dual with respect to the standard bilinear form. The code $C$ is \textbf{self-orthogonal} if $\Hull (C) = C$, \textbf{self-dual} if $C = C^\perp$, and \textbf{linear complementary dual}, abbreviated LCD, if $\Hull (C) = 0$.
\end{defn}

We shall use the classical duality for algebraic geometry codes in the following form, for which we refer to \cite[Ch.~2]{st}. If $\eta$ is a differential with $v_P ( \eta ) = -1$ and $\Res_P ( \eta ) = 1$ for every $P \in \Supp D$, then
\begin{equation}\label{eq-duality}
C_L ( D, G )^\perp = C_L \left( D, \, D - G + \dv ( \eta ) \right).
\end{equation}
Consequently, writing $G_1 \wedge G_2$ and $G_1 \vee G_2$ for the divisors whose coefficient at each place is respectively the minimum and the maximum of those of $G_1$ and $G_2$, so that
\[
L ( G_1 \wedge G_2 ) = L (G_1) \cap L (G_2), \qquad G_1 \wedge G_2 + G_1 \vee G_2 = G_1 + G_2 ,
\]
one has
\begin{equation}\label{eq-hull-bound}
C_L \left( D, \, G \wedge ( D - G + \dv ( \eta ) ) \right) \subseteq \Hull \left( C_L ( D, G ) \right).
\end{equation}
Finally we recall that $\dim \Hull (C)$ is invariant under permutation equivalence of codes, but not under monomial equivalence when $q \geq 4$: by \cite{ca-me-ta-qi-pe} every linear code over $\F_q$ with $q \geq 4$ is monomially equivalent to an LCD code.

%------------------------------------------------------------------------
%------------------------------------------------------------------------
\section{The hull of an evaluation code}\label{sec-window}

The hull of an evaluation code is an intersection of two Riemann-Roch images, and an intersection of images is not necessarily  the image of an intersection. 

Let $\XX$ be a smooth projective geometrically irreducible curve of genus $g_\XX$ over $\F_q$, let $D = P_1 + \cdots + P_N$ be a reduced divisor of distinct $\F_q$-rational points, and let $G$ and $A$ be divisors with $\Supp G \cap \Supp D = \Supp A \cap \Supp D = \emptyset$. Denote by
\[
\begin{split}
R ( G, A ) 			&	:= \big( L(G) + L(A) \big) \cap L ( G \vee A - D ), \\
R_0 ( G, A ) 		&	:= L ( G - D ) + L ( A - D ), \\
\varepsilon ( G, A ) 	&	:= \dim_{\F_q} R (G,A) / R_0 (G,A) .
\end{split}
\]

\begin{thm}\label{thm-meet}
Let $\XX$, $D$, $G$ and $A$ be as above. Then the following hold.
\begin{enumerate}
\item $R_0 (G,A) \subseteq R (G,A)$; in particular $\varepsilon (G,A) \geq 0$.
\item $C_L ( D, G \wedge A ) \; \subseteq \; C_L ( D, G ) \cap C_L ( D, A )$.
\item There is a canonical isomorphism of $\F_q$-vector spaces
\begin{equation}\label{eq-meet-quotient}
\big( C_L ( D, G ) \cap C_L ( D, A ) \big) \big/ C_L ( D, G \wedge A ) \; \cong \; R (G,A) / R_0 (G,A) .
\end{equation}
\item Consequently,
\begin{equation}\label{eq-meet-exact}
\dim \big( C_L ( D, G ) \cap C_L ( D, A ) \big) \; = \; \ell ( G \wedge A ) - \ell ( G \wedge A - D ) + \varepsilon ( G, A ) .
\end{equation}
\end{enumerate}
\end{thm}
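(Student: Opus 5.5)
The plan is to realize the intersection $C_L(D,G) \cap C_L(D,A)$ through pairs of functions, and to compare it with $R(G,A)$ by taking differences. The one input used repeatedly is the following. If $H$ is a divisor with $\Supp H \cap \Supp D = \emptyset$ and $h \in L(H)$, then $\ev(h) = 0$ if and only if $h \in L(H - D)$, because $D$ is reduced and $H$ has coefficient zero at each $P_i$. This applies to $H = G$, $A$, $G \wedge A$ and $G \vee A$, all of whose supports avoid $\Supp D$.

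Parts (1) and (2) are immediate. For (1), we have $L(G - D) \subseteq L(G)$, and $L(G - D) \subseteq L(G \vee A - D)$ because $G \leq G \vee A$. The same holds for $A$, and $R(G,A)$ is a subspace, so $R_0(G,A) \subseteq R(G,A)$. For (2), we use $L(G \wedge A) = L(G) \cap L(A)$ and apply $\ev$.

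For (3), define
\[
\psi : R(G,A) \longrightarrow \big( C_L(D,G) \cap C_L(D,A) \big) \big/ C_L(D, G \wedge A)
\]
as follows. Given $h \in R(G,A)$, write $h = g - a$ with $g \in L(G)$ and $a \in L(A)$. Since $h \in L(G \vee A - D)$, we have $h(P_i) = 0$ for all $i$, so $\ev(g) = \ev(a)$ lies in the intersection. Set $\psi(h)$ to be the class of $\ev(g)$.

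The steps are then the following.
\begin{enumerate}
\item $\psi$ is well defined. If $g - a = g' - a'$, then $g - g' = a - a' \in L(G) \cap L(A) = L(G \wedge A)$, so $\ev(g) - \ev(g') \in C_L(D, G \wedge A)$.
\item $\psi$ is linear, since the decompositions can be added and scaled.
\item $\psi$ is surjective. If $\ev(g) = \ev(a)$ with $g \in L(G)$ and $a \in L(A)$, then $g - a \in L(G) + L(A) \subseteq L(G \vee A)$ and it vanishes on $D$. So $g - a \in L(G \vee A - D)$, hence $g - a \in R(G,A)$, and it maps to the class of $\ev(g)$.
\item $\ker \psi \supseteq R_0$. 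An element of $R_0$ is $u + v$ with $u \in L(G - D)$ and $v \in L(A - D)$. Take $g = u$ and $a = -v$; then $\ev(g) = 0$.
\item $\ker \psi \subseteq R_0$. If $\ev(g) = \ev(m)$ with $m \in L(G \wedge A)$, then $g - m \in L(G)$ vanishes on $D$, so $g - m \in L(G - D)$. Also $\ev(a) = \ev(g) = \ev(m)$, so $a - m \in L(A - D)$. Hence $h = (g - m) - (a - m) \in R_0(G,A)$.
\end{enumerate}
Together these give the canonical isomorphism \cref{eq-meet-quotient}.

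Part (4) follows by taking dimensions and using $\dim C_L(D, G \wedge A) = \ell(G \wedge A) - \ell(G \wedge A - D)$.

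The only delicate point is the well-definedness in step 1, that is, the independence of the decomposition $h = g - a$. This is exactly where the identity $L(G) \cap L(A) = L(G \wedge A)$ is used. Everything else is bookkeeping with the vanishing criterion stated at the start.
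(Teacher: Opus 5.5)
Your proof is correct and is essentially the paper's argument. The paper works with the space $\Gamma$ of pairs $(u,v)\in L(G)\oplus L(A)$ with $\ev(u)=\ev(v)$, and shows that $(u,v)\mapsto\ev(u)$ and $(u,v)\mapsto u-v$ both induce isomorphisms out of $\Gamma$ modulo the sum of their kernels. Your map $R(G,A)\to\big(C_L(D,G)\cap C_L(D,A)\big)/C_L(D,G\wedge A)$ is the first of these composed with the inverse of the second, and your well-definedness, surjectivity and kernel checks are the same computations the paper makes: the kernel and image of $u-v$, and the preimage identity $\ker\varphi+\ker\psi$.
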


\begin{proof}
Evaluation at the points of $\Supp D$ is defined on $L(G)$ and on $L(A)$ by the disjointness of supports. Set
\[
\Gamma = \left\{ ( u, v ) \in L(G) \oplus L(A) \; : \; \ev (u) = \ev (v) \right\}
\]
and consider the two $\F_q$-linear maps
\[
\begin{split}
\varphi &: \Gamma \longrightarrow \F_q^N, \qquad \varphi (u,v) = \ev (u), \\
\psi &: \Gamma \longrightarrow \F_q ( \XX ), \qquad \psi (u,v) = u - v .
\end{split}
\]

The image of $\varphi$ is $C_L (D,G) \cap C_L (D,A)$, since every element of the intersection is of the form $\ev (u) = \ev (v)$ with $u \in L(G)$ and $v \in L(A)$, and its kernel is $L(G-D) \oplus L(A-D)$.

The image of $\psi$ is $R (G,A)$. Indeed $L(G)$ and $L(A)$ are contained in $L ( G \vee A )$, so $u - v \in L ( G \vee A )$, and $\ev (u-v) = 0$ places $u-v$ in $L ( G \vee A - D )$; as $u - v \in L(G) + L(A)$ this gives $\psi ( \Gamma ) \subseteq R (G,A)$. 

Conversely any $w \in R (G,A)$ may be written $w = u - v$ with $u \in L(G)$ and $v \in L(A)$, and $\ev (u) - \ev (v) = \ev (w) = 0$, so $(u,v) \in \Gamma$ and $\psi (u,v) = w$. The kernel of $\psi$ is the diagonal copy of $L(G) \cap L(A) = L ( G \wedge A )$.

Next, $\varphi ( \ker \psi ) = \ev \, L ( G \wedge A ) = C_L ( D, G \wedge A )$ and $\psi ( \ker \varphi ) = L(G-D) + L(A-D) = R_0 (G,A)$. Since the image of a kernel is contained in the image of $\Gamma$, this proves (1) and (2): $R_0 (G,A) = \psi ( \ker \varphi ) \subseteq \psi ( \Gamma ) = R (G,A)$, and $C_L ( D, G \wedge A ) = \varphi ( \ker \psi ) \subseteq \varphi ( \Gamma ) = C_L (D,G) \cap C_L (D,A)$.

For (3) we claim that
\[
\varphi^{-1} \left( C_L ( D, G \wedge A ) \right) \; = \; \ker \varphi + \ker \psi \; = \; \psi^{-1} \left( R_0 (G,A) \right) .
\]
In both equalities the containment of the right hand side in the left is the previous paragraph. For the first, let $(u,v) \in \Gamma$ with $\varphi (u,v) = \ev (w)$ for some $w \in L ( G \wedge A )$. Then $\ev (u-w) = 0$ and $\ev (v-w) = \ev (u) - \ev (w) = 0$, so $u - w \in L(G-D)$ and $v-w \in L(A-D)$, and $(u,v) = (w,w) + (u-w, v-w)$ lies in $\ker \psi + \ker \varphi$. For the second, let $(u,v) \in \Gamma$ with $u - v = a + b$, $a \in L(G-D)$ and $b \in L(A-D)$, and put $w = u - a = v + b$. Then $w \in L(G)$ and $w \in L(A)$, hence $w \in L ( G \wedge A )$, and $(u,v) = (w,w) + (a, -b)$ lies in $\ker \psi + \ker \varphi$.

Both $\varphi$ and $\psi$ therefore induce isomorphisms of $\Gamma / ( \ker \varphi + \ker \psi )$ onto
\[
\left( C_L (D,G) \cap C_L (D,A) \right) / C_L ( D, G \wedge A ) \qquad \text{and onto} \qquad R (G,A) / R_0 (G,A)
\]
respectively, which is \cref{eq-meet-quotient} and proves (3). Finally, taking dimensions in \cref{eq-meet-quotient} and using $\dim C_L ( D, G \wedge A ) = \ell ( G \wedge A ) - \ell ( G \wedge A - D )$ gives (4).
\end{proof}

We call $\varepsilon (G,A)$ the \textbf{excess} of the pair. It is not a Riemann-Roch quantity of $G$ or of $A$: the space $R (G,A)$ consists of the functions with poles bounded by the join which vanish on $D$ and are decomposable as a difference along $G$ and $A$, and $R_0 (G,A)$ of those decomposable along $G - D$ and $A - D$, that is, with each term vanishing on $D$ separately. Since $L(G-D) \cap L(A-D) = L ( G \wedge A - D )$ one has
\[
\dim R_0 (G,A) = \ell (G-D) + \ell (A-D) - \ell ( G \wedge A - D ),
\]
so the excess is determined by $\dim R (G,A)$ together with Riemann-Roch data.

%\subsection{Speciality of the meet}\label{ssec-speciality}

We now specialise the second divisor to $A = D - G + K$, where $K = \dv ( \eta )$ for a differential $\eta$ as in \cref{eq-duality}. By the duality theorem this makes $C_L (D,A)$ the dual of $C_L (D,G)$, so the intersection of \cref{thm-meet} becomes the hull. The gain is an identity of divisors rather than a linear equivalence, and it converts the excess into a cohomological quantity attached to the meet alone.

\begin{thm}\label{thm-canonical}
Let $\XX$, $D$ and $G$ be as above, and suppose that $A = D - G + K$, where $K = \dv ( \eta )$ for a differential $\eta$ as in \cref{eq-duality}, so that $C_L (D,A) = C_L (D,G)^\perp$ and $C_L (D,G) \cap C_L (D,A) = \Hull \left( C_L (D,G) \right)$. Then the following hold.
\begin{enumerate}
\item As divisors,
\begin{equation}\label{eq-join-canonical}
G \vee A - D \; = \; K - ( G \wedge A ) ,
\end{equation}
so that $L ( G \vee A - D ) = L ( K - G \wedge A )$, the Serre dual of $H^1 \left( \XX, \mathcal O_\XX ( G \wedge A ) \right)$.
\item One has $R (G,A) \subseteq L ( K - G \wedge A )$, so \cref{eq-meet-quotient} is an isomorphism of $\Hull \left( C_L (D,G) \right) \big/ C_L ( D, G \wedge A )$ onto the subquotient $R (G,A) / R_0 (G,A)$ of $H^1 \left( \XX, \mathcal O_\XX ( G \wedge A ) \right)^{\vee}$, and
\begin{equation}\label{eq-eps-h1}
0 \; \leq \; \varepsilon (G,A) \; \leq \; \ell ( K - G \wedge A ) \; = \; \dim_{\F_q} H^1 \left( \XX, \mathcal O_\XX ( G \wedge A ) \right) .
\end{equation}
\item If $G \wedge A$ is non-special, then $\varepsilon (G,A) = 0$ and
\[
\Hull \left( C_L (D,G) \right) \; = \; C_L ( D, G \wedge A ) .
\]
\end{enumerate}
\end{thm}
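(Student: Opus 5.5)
The identity \cref{eq-join-canonical} is a pointwise statement about coefficients, and the rest is bookkeeping on top of \cref{thm-meet}, so I would prove (1) first and deduce (2) and (3) from it.

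For (1), I would use $G \wedge A + G \vee A = G + A$ and substitute $A = D - G + K$, which gives $G \vee A = G + A - G \wedge A = D + K - G \wedge A$. Subtracting $D$ yields \cref{eq-join-canonical} as an equality in $\mathrm{Div}(\XX)$. The equality of Riemann-Roch spaces is then immediate, since the divisors are equal and not merely linearly equivalent. The identification with the Serre dual of $H^1$ is the standard isomorphism $H^1(\XX,\mathcal O_\XX(E))^\vee \cong L(K-E)$ for $K = \dv(\eta)$ a canonical divisor, as in \cite[Ch.~1]{st}. Here I should check that $\eta$ is a nonzero differential, so $K$ is genuinely canonical, and that the supports of $G$ and $A$ avoid $D$ as \cref{thm-meet} requires. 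The second point holds because $v_P(\eta) = -1$ at $P \in \Supp D$ makes the coefficient of $A$ at such $P$ equal to $1 - 0 - 1 = 0$.

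For (2), $R(G,A)$ is by definition contained in $L(G \vee A - D)$, which by (1) equals $L(K - G\wedge A)$. The isomorphism \cref{eq-meet-quotient} of \cref{thm-meet}(3) applies verbatim to $C_L(D,G)\cap C_L(D,A)$, which is the hull by \cref{eq-duality}. So the quotient of the hull by the code of the meet is $R/R_0$, a subquotient of $L(K-G\wedge A) \cong H^1(\mathcal O(G\wedge A))^\vee$. The inequalities in \cref{eq-eps-h1} follow by taking dimensions, using $\varepsilon \ge 0$ from \cref{thm-meet}(1), $\dim R/R_0 \le \dim R \le \ell(K-G\wedge A)$, and Riemann-Roch or Serre duality for the final equality.

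For (3), non-specialness means $\ell(K - G\wedge A) = 0$, so $\varepsilon = 0$ by (2). Then \cref{thm-meet}(2),(3) show that the inclusion $C_L(D,G\wedge A) \subseteq \Hull$ has zero quotient, hence is an equality.

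I expect no serious obstacle. The only point needing care is that \cref{eq-join-canonical} must hold as divisors and not just up to linear equivalence. This is what lets $L(G\vee A - D)$ be literally equal to $L(K - G\wedge A)$, rather than merely isomorphic to it, so that $R(G,A)$ sits inside it as a subspace of $\F_q(\XX)$. That in turn requires fixing $K = \dv(\eta)$ with the same $\eta$ used to define $A$, which the hypothesis provides.
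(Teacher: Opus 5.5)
Your proposal is correct and follows the paper's proof essentially step for step. You derive the divisor identity from $G + A = D + K$ and $G \wedge A + G \vee A = G + A$, obtain (2) from the definitional containment $R(G,A) \subseteq L(G \vee A - D)$, and obtain (3) from the vanishing of $L(K - G \wedge A)$. The differences are cosmetic: in (3) you conclude from the zero quotient in \cref{thm-meet}(3) rather than from the dimension count \cref{eq-meet-exact} plus containment, and you make explicit the check, left implicit in the paper, that $\Supp A \cap \Supp D = \emptyset$ because $v_P(\eta) = -1$ on $\Supp D$.
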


\begin{proof}
For (1), from $A = D - G + K$ we get $G + A = D + K$, and $G \wedge A + G \vee A = G + A$ gives \cref{eq-join-canonical}; equal divisors have equal Riemann-Roch spaces, and by duality and Riemann-Roch, \cite[Ch.~1]{st}, the space $L ( K - G \wedge A )$ is dual to $H^1 \left( \XX, \mathcal O_\XX ( G \wedge A ) \right)$, of dimension $\ell ( G \wedge A ) - \deg ( G \wedge A ) - 1 + g_\XX$, the index of speciality of $G \wedge A$.

For (2), by definition $R (G,A) \subseteq L ( G \vee A - D )$, which is $L ( K - G \wedge A )$ by (1); hence $R (G,A) / R_0 (G,A)$ is a subquotient of $H^1 \left( \XX, \mathcal O_\XX ( G \wedge A ) \right)^{\vee}$, \cref{eq-meet-quotient} is an isomorphism onto it, and \cref{eq-eps-h1} follows.

For (3), if $G \wedge A$ is non-special then $L ( K - G \wedge A ) = 0$, hence $R (G,A) = 0$ and $\varepsilon (G,A) = 0$; the two codes then have the same dimension by \cref{eq-meet-exact}, and the containment of \cref{thm-meet}(2) makes them equal.
\end{proof}

\begin{cor}\label{cor-window}
Let $\XX$, $D$, $G$ and $A = D - G + K$ be as in \cref{thm-canonical}. Then the following hold.
\begin{enumerate}
\item As divisors,
\[
\deg ( G \wedge A ) + \deg ( G \vee A ) \; = \; N + 2 g_\XX - 2 ;
\]
in particular the conditions \( \deg ( G \vee A ) < N \) and \( \deg ( G \wedge A ) > 2 g_\XX - 2 \) are equivalent.
\item If $\deg ( G \wedge A ) > 2 g_\XX - 2$, then
\begin{equation}\label{eq-window-hull}
\Hull \big( C_L (D,G) \big) = C_L ( D, G \wedge A ) \quad \text{ and } \quad \dim \Hull \big( C_L (D,G) \big) = \deg ( G \wedge A ) + 1 - g_\XX .
\end{equation}
\end{enumerate}
\end{cor}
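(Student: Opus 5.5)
For (1) we start from the divisor identity $G \wedge A + G \vee A = G + A$, which holds coefficientwise because $\min\{a,b\} + \max\{a,b\} = a+b$ at every place. By the hypothesis $A = D - G + K$ we have $G + A = D + K$. Taking degrees gives
\[
\deg ( G \wedge A ) + \deg ( G \vee A ) = \deg D + \deg K = N + 2 g_\XX - 2 .
\]
Here we use $\deg D = N$, since $D$ is reduced on $N$ rational points, and $\deg K = 2 g_\XX - 2$, since $K = \dv ( \eta )$ is canonical. The stated equivalence then follows by rearranging: $\deg ( G \vee A ) < N$ holds if and only if $N + 2 g_\XX - 2 - \deg ( G \wedge A ) < N$, that is, if and only if $\deg ( G \wedge A ) > 2 g_\XX - 2$. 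Equivalently, one can read this off \cref{eq-join-canonical} directly, since $\deg ( G \vee A - D ) = \deg ( K - G \wedge A )$.

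For (2) the plan is to reduce to \cref{thm-canonical}(3). If $\deg ( G \wedge A ) > 2 g_\XX - 2$, then $\deg ( K - G \wedge A ) < 0$, so $\ell ( K - G \wedge A ) = 0$ and $G \wedge A$ is non-special. \Cref{thm-canonical}(3) then gives $\Hull ( C_L (D,G) ) = C_L ( D, G \wedge A )$, and by \cref{eq-meet-exact} with $\varepsilon = 0$ its dimension is $\ell ( G \wedge A ) - \ell ( G \wedge A - D )$. Riemann-Roch for a non-special divisor gives $\ell ( G \wedge A ) = \deg ( G \wedge A ) + 1 - g_\XX$.

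It remains to show $\ell ( G \wedge A - D ) = 0$, and this is the only point requiring care. We plan to use part (1). Since $\deg ( G \vee A ) < N$, we get $\deg ( G \wedge A - D ) \leq \deg ( G \vee A - D ) < 0$, where the first inequality holds because $G \wedge A \leq G \vee A$ coefficientwise. A divisor of negative degree has no nonzero Riemann-Roch space, so $\ell ( G \wedge A - D ) = 0$. Alternatively, $L ( G \wedge A - D ) \subseteq L ( G \vee A - D ) = L ( K - G \wedge A ) = 0$ by \cref{eq-join-canonical}. Either route yields \cref{eq-window-hull}. We do not expect a genuine obstacle anywhere in the argument; the one step to watch is this vanishing of $\ell ( G \wedge A - D )$, which is exactly what makes the evaluation map injective on $L ( G \wedge A )$.
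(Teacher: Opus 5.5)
Your proposal is correct and follows the paper's proof essentially step for step: the degree identity from $G \wedge A + G \vee A = G + A = D + K$, non-speciality of the meet feeding \cref{thm-canonical}(3) together with Riemann--Roch, and the vanishing of $\ell ( G \wedge A - D )$ from $\deg ( G \wedge A ) \leq \deg ( G \vee A ) < N$. Your alternative route via $L ( G \wedge A - D ) \subseteq L ( K - G \wedge A ) = 0$ is equally valid.
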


\begin{proof}
For (1), the degree identity is \cref{eq-join-canonical} together with $\deg K = 2 g_\XX - 2$, and the equivalence of the two conditions follows.

For (2), a divisor of degree greater than $2 g_\XX - 2$ is non-special, so \cref{thm-canonical}(3) gives $\Hull ( C_L (D,G) ) = C_L ( D, G \wedge A )$, and Riemann-Roch gives $\ell ( G \wedge A ) = \deg ( G \wedge A ) + 1 - g_\XX$. Finally $G \wedge A \leq G \vee A$ gives $\deg ( G \wedge A ) \leq \deg ( G \vee A ) < N$, so $\ell ( G \wedge A - D ) = 0$ and the code has the dimension of the space.
\end{proof}

%\subsection{Detection}\label{ssec-detect}

%\cref{cor-window} says when the excess is blind. The complementary statement is that when it is not blind, what it sees is a linear equivalence.

For the pair of \cref{thm-canonical}, denote by
\(
t (G) := \deg ( G \vee A - D ) = 2 g_\XX - 2 - \deg ( G \wedge A ) ,
\)
the equality by \cref{eq-join-canonical}.

\begin{cor}\label{cor-detect}
Let $\XX$, $D$, $G$ and $A = D - G + K$ be as in \cref{thm-canonical}. If $\varepsilon (G,A) \neq 0$, then $K - G \wedge A$ is linearly equivalent over $\F_q$ to an effective divisor of degree $t (G)$, that is, its class lies in the image $W_{t (G)} ( \XX )$ of $\Sym^{t (G)} \XX \to \Pic^{t (G)} ( \XX )$. In particular:
\begin{enumerate}
\item if $t (G) < 0$, then $\varepsilon (G,A) = 0$;
\item if $t (G) = 0$, then $\varepsilon (G,A) \in \{ 0, 1 \}$, and $\varepsilon (G,A) = 1$ forces
\[
G \vee A \sim D , \qquad \text{equivalently} \qquad G \wedge A \sim K ;
\]
that is, the class of $K - G \wedge A$, one of the $| \Pic^0 ( \XX ) ( \F_q ) |$ classes of degree zero, is trivial;
\item if $0 < t (G) < g_\XX$, then a nonzero excess forces the class of $K - G \wedge A$ into $W_{t (G)} ( \XX )$, a proper subvariety of $\Pic^{t (G)} ( \XX )$, since by \cite[Ch.~I]{acgh} it has dimension $t (G) < g_\XX$.
\end{enumerate}
\end{cor}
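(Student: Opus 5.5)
The argument rests on \cref{thm-canonical}(2), which places $R(G,A)$ inside $L(K - G \wedge A) = L(G \vee A - D)$. If $\varepsilon(G,A) \neq 0$, then $R(G,A) \neq R_0(G,A)$, so $R(G,A)$ is nonzero, and hence so is $L(K - G \wedge A)$. Choose a nonzero $h \in L(K - G\wedge A)$. The divisor $E = \dv(h) + K - G \wedge A$ is effective and linearly equivalent to $K - G \wedge A$ over $\F_q$, since $h \in \F_q(\XX)$. By \cref{eq-join-canonical} its degree is $\deg(G \vee A - D) = t(G)$. An effective $\F_q$-rational divisor of degree $t(G)$ is exactly an $\F_q$-point of $\Sym^{t(G)}\XX$, so its class lies in $W_{t(G)}(\XX)$. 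This gives the main assertion.

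For (1): if $t(G) < 0$, there is no effective divisor of negative degree, so $L(K - G\wedge A) = 0$. Then $R(G,A) = 0$ and therefore $\varepsilon(G,A) = 0$; this is also the contrapositive of the main assertion.

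For (2): if $t(G) = 0$, then $K - G \wedge A$ has degree zero. A degree-zero divisor has $\ell \leq 1$, with equality exactly when it is principal. Hence \cref{eq-eps-h1} gives $\varepsilon(G,A) \leq 1$, so $\varepsilon(G,A) \in \{0,1\}$. If $\varepsilon(G,A) = 1$, then $\ell(K - G\wedge A) = 1$, so $K - G \wedge A \sim 0$, that is, $G \wedge A \sim K$. By \cref{eq-join-canonical}, $G \vee A - D = K - G\wedge A$, which yields the equivalent form $G \vee A \sim D$. The class of $K - G \wedge A$ is an element of $\Pic^0(\XX)(\F_q)$, and the condition is that this element is trivial.

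For (3): when $0 < t(G) < g_\XX$, the main assertion places the class in $W_{t(G)}(\XX)$. This is the image of the $t(G)$-dimensional variety $\Sym^{t(G)}\XX$, so its dimension is at most $t(G)$ (it equals $t(G)$ by \cite[Ch.~I]{acgh}). Since $\Pic^{t(G)}(\XX)$ has dimension $g_\XX > t(G)$, the subvariety $W_{t(G)}(\XX)$ is proper.

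The only point needing care is rationality. We need $h$ defined over $\F_q$, so that $E$ is an $\F_q$-divisor and the class is an $\F_q$-point. This holds because $R(G,A)$ is by definition an $\F_q$-subspace of $\F_q(\XX)$. Everything else is immediate from \cref{thm-canonical}.
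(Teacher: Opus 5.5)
Your proposal is correct and follows the paper's proof essentially step for step. The main assertion comes from a nonzero section of $L(K - G\wedge A)$, guaranteed by $\varepsilon(G,A) \leq \ell(K - G \wedge A)$ from \cref{thm-canonical}(2), and items (1)--(3) use the same degree arguments, with your bound of $\dim W_{t(G)}(\XX)$ by $\dim \Sym^{t(G)}\XX$ replacing the citation of \cite{acgh} but changing nothing essential.
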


\begin{proof}
By \cref{eq-eps-h1} a nonzero excess forces $\ell ( K - G \wedge A ) > 0$, that is, the existence of $h \in \F_q ( \XX )^*$ with $\dv (h) + K - G \wedge A \geq 0$; the divisor $\dv (h) + K - G \wedge A$ is then effective, linearly equivalent to $K - G \wedge A$, and of degree $\deg ( K - G \wedge A ) = t (G)$. This is the main assertion.

For (1), a divisor of negative degree has no nonzero global sections, so $\ell ( K - G \wedge A ) = 0$ and $\varepsilon (G,A) = 0$ by \cref{eq-eps-h1}.

For (2), a divisor of degree zero has $\ell \leq 1$, with equality if and only if it is principal; so $\varepsilon (G,A) \leq \ell ( K - G \wedge A ) \leq 1$ by \cref{eq-eps-h1}, and $\varepsilon (G,A) = 1$ forces $K - G \wedge A \sim 0$, that is $G \wedge A \sim K$, which by \cref{eq-join-canonical} is $G \vee A \sim D$.

For (3), the main assertion places the class in $W_{t (G)} ( \XX )$, and the quoted dimension makes that a proper subvariety.
\end{proof}

\begin{cor}\label{cor-clifford}
Let $\XX$, $D$, $G$ and $A = D - G + K$ be as in \cref{thm-canonical}. Then the following hold.
\begin{enumerate}
\item $\dim \Hull \left( C_L (D,G) \right) \leq \ell ( G \wedge A ) + \ell ( K - G \wedge A )$.
\item If $0 \leq \deg ( G \wedge A ) \leq 2 g_\XX - 2$, equivalently $0 \leq t (G) \leq 2 g_\XX - 2$, then
\[
\dim \Hull \left( C_L (D,G) \right) \leq g_\XX + 1 .
\]
\end{enumerate}
\end{cor}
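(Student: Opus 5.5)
For (1) we combine \cref{thm-meet}(4) with the bound \cref{eq-eps-h1}. Since $C_L(D,G) \cap C_L(D,A) = \Hull(C_L(D,G))$, we get
\[
\dim \Hull \big( C_L (D,G) \big) = \ell ( G \wedge A ) - \ell ( G \wedge A - D ) + \varepsilon (G,A) \leq \ell ( G \wedge A ) + \ell ( K - G \wedge A ),
\]
because $\ell ( G \wedge A - D ) \geq 0$ and $\varepsilon (G,A) \leq \ell ( K - G \wedge A )$. Nothing further is needed for (1).

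For (2), put $E = G \wedge A$ and assume $0 \leq \deg E \leq 2 g_\XX - 2$. By \cref{cor-window}(1), or directly from $t(G) = 2g_\XX - 2 - \deg E$, this is equivalent to $0 \leq t(G) \leq 2 g_\XX - 2$. We split into cases according to whether $E$ and $K - E$ are effective up to linear equivalence, that is, whether $\ell(E)$ and $\ell(K-E)$ vanish.

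\emph{Both nonzero.} Then $E$ and $K - E$ are both special and effective up to equivalence. Clifford's theorem gives $2(\ell(E) - 1) \leq \deg E$ and $2(\ell(K-E) - 1) \leq \deg(K-E) = 2 g_\XX - 2 - \deg E$. Adding these yields $\ell(E) + \ell(K-E) \leq g_\XX + 1$, and (1) finishes the case.

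\emph{One of them zero,} say $\ell(K-E) = 0$. Then $E$ is non-special, so Riemann--Roch gives $\ell(E) = \deg E + 1 - g_\XX \leq g_\XX - 1$. Symmetrically, if $\ell(E) = 0$ then Riemann--Roch gives $\ell(K-E) = g_\XX - 1 - \deg E \leq g_\XX - 1$. Either way the sum is at most $g_\XX + 1$.

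The main obstacle is applying Clifford over $\F_q$ rather than $\overline{\F}_q$. Since $\ell$ is invariant under constant field extension for a geometrically irreducible curve, the inequality transfers. The only other point to check is that Clifford is stated for divisors of degree in $[0, 2g_\XX - 2]$ with $\ell > 0$, which is exactly what the case split arranges. The degenerate case $g_\XX = 0$ cannot occur under the hypothesis except vacuously.
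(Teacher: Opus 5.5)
Your proof is correct and follows the paper's argument. Part (1) is \cref{thm-meet}(4) combined with $\ell(G \wedge A - D) \geq 0$ and \cref{eq-eps-h1}. Part (2) applies Clifford's theorem to both $G \wedge A$ and $K - G \wedge A$ and adds the two bounds to get $g_\XX + 1$. Your case split on whether $\ell(G \wedge A)$ or $\ell(K - G \wedge A)$ vanishes is harmless but unnecessary: in the form the paper cites from Stichtenoth (Thm.~1.6.13), Clifford's inequality $\ell(B) \leq 1 + \tfrac12 \deg B$ holds for every divisor $B$ with degree in $[0, 2g_\XX - 2]$, and it is trivial when $\ell(B) = 0$.
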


\begin{proof}
For (1), the bound is \cref{eq-meet-exact} together with $\ell ( G \wedge A - D ) \geq 0$ and \cref{eq-eps-h1}.

For (2), $\deg ( K - G \wedge A ) = t (G) = 2 g_\XX - 2 - \deg ( G \wedge A )$ by \cref{eq-join-canonical}, so both $G \wedge A$ and $K - G \wedge A$ have degree in $[ 0, 2 g_\XX - 2 ]$ and Clifford's theorem \cite[Thm.~1.6.13]{st} applies to each:
\[
\begin{split}
\ell ( G \wedge A ) &\leq \tfrac{1}{2} \deg ( G \wedge A ) + 1 , \\
\ell ( K - G \wedge A ) &\leq \tfrac{1}{2} \left( 2 g_\XX - 2 - \deg ( G \wedge A ) \right) + 1 ,
\end{split}
\]
and the two right hand sides add to $g_\XX + 1$.
\end{proof}

\cref{cor-window} and \cref{cor-clifford} divide the range of $\deg ( G \wedge A )$ between them: above $2 g_\XX - 2$ the hull is the Riemann-Roch quantity of \cref{cor-window} and grows with the degree, while on $[ 0, 2 g_\XX - 2 ]$ it is at most $g_\XX + 1$ whatever the excess does, by a bound involving neither $D$ nor $G$. A hull is therefore large only where it detects nothing, and where it detects it is small.

The excess also vanishes for a reason having nothing to do with speciality.

\begin{prop}\label{prop-degenerate}
Let $\XX$, $D$, $G$ and $A$ be as above. If $G \leq A$, then $G \wedge A = G$, $C_L (D,G) \subseteq C_L (D,A)$ and $\varepsilon (G,A) = 0$; since $\varepsilon (A,G) = \varepsilon (G,A)$, the excess vanishes also when $A \leq G$. If $A = D - G + K$ as in \cref{thm-canonical}, then $G \leq A$ says that $C_L (D,G)$ is self-orthogonal, and then $\Hull ( C_L (D,G) ) = C_L (D,G)$.
\end{prop}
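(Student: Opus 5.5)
The plan is to read everything off the definitions of $R$, $R_0$ and the meet, and then use \cref{thm-meet} and \cref{thm-canonical} only for the hull statement.

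\emph{The meet and the containment.} Since $G \leq A$ coefficientwise, the minimum at each place is the coefficient of $G$, so $G \wedge A = G$ and likewise $G \vee A = A$. Hence $L(G) \subseteq L(A)$, and applying $\ev$ gives $C_L(D,G) \subseteq C_L(D,A)$.

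\emph{Vanishing of the excess.} With $L(G) \subseteq L(A)$ we have $L(G) + L(A) = L(A)$, so
\[
R(G,A) = L(A) \cap L(A - D) = L(A-D).
\]
On the other side $L(G-D) \subseteq L(A-D)$, so $R_0(G,A) = L(A-D)$ as well. The quotient is therefore zero and $\varepsilon(G,A) = 0$. As a cross-check, \cref{eq-meet-exact} then gives $\dim (C_L(D,G) \cap C_L(D,A)) = \dim C_L(D,G)$, which is consistent with the containment just proved.

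\emph{Symmetry.} Both $R$ and $R_0$ are symmetric in $G$ and $A$: the meet, the join, the sum $L(G)+L(A)$ and the sum $L(G-D)+L(A-D)$ are all unchanged when the two divisors are swapped. So $\varepsilon(A,G) = \varepsilon(G,A)$, and the case $A \leq G$ reduces to the case already treated.

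\emph{The hull statement.} Take $A = D - G + K$. By \cref{thm-canonical} we have $C_L(D,A) = C_L(D,G)^\perp$. The inclusion from the first step, $C_L(D,G) \subseteq C_L(D,A)$, then says exactly that the code is self-orthogonal. It follows that $\Hull(C_L(D,G)) = C_L(D,G) \cap C_L(D,G)^\perp = C_L(D,G)$.

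There is no real obstacle in any of these steps. The one point needing care is that "self-orthogonal" here means the implication from $G \leq A$, not an equivalence, so only that direction should be proved.
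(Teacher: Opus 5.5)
Your proposal is correct and follows the paper's proof step for step. In particular, you identify $G \vee A = A$ to get $R(G,A) = L(A-D) = R_0(G,A)$, you get $\varepsilon(A,G) = \varepsilon(G,A)$ from the symmetry of the defining spaces, and you deduce self-orthogonality from $L(G) \subseteq L(A)$ together with $C_L(D,A) = C_L(D,G)^\perp$.
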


\begin{proof}
If $G \leq A$ then $G \wedge A = G$ and $G \vee A = A$, so $L(G) + L(A) = L(A)$ and $R (G,A) = L(A) \cap L(A-D) = L(A-D)$, while $R_0 (G,A) = L(G-D) + L(A-D) = L(A-D)$ as well; hence $\varepsilon (G,A) = 0$. The three spaces $R (G,A)$, $R_0 (G,A)$ and $L ( G \wedge A )$ are symmetric in $G$ and $A$, whence $\varepsilon (A,G) = \varepsilon (G,A)$. The containment of codes is clear, and under the hypothesis of \cref{thm-canonical} it reads $C_L (D,G) \subseteq C_L (D,G)^\perp$.
\end{proof}

The classical route from an algebraic geometry code to a quantum code arranges exactly such an order relation: in \cite{el-sh} a divisor $G$ invariant under an involution is chosen so that $D - G + \dv ( \eta ) \leq G$, and it is this relation, making the code contain its dual, that feeds the stabiliser construction. By \cref{prop-degenerate} such a choice forces $\varepsilon = 0$, so the construction is carried out entirely inside the zone where the hull is determined by the order relation between the two divisors and, by \cref{cor-detect}, detects nothing about the curve.

\begin{defn}\label{defn-zones}
Let $G$ and $A = D - G + K$ be as in \cref{thm-canonical}. The pair is called
\begin{enumerate}
\item[(Z0)] \textbf{degenerate} if $G \leq A$ or $A \leq G$;
\item[(Z1)] \textbf{blind} if $t (G) < 0$;
\item[(Z2)] \textbf{detecting} if $0 \leq t (G) \leq g_\XX - 1$;
\item[(Z3)] \textbf{saturated} if $t (G) \geq g_\XX$.
\end{enumerate}
\end{defn}

These are the four regimes of the excess, and only one of them carries information about the curve. In (Z0) the meet is one of the two divisors and $\varepsilon = 0$ by \cref{prop-degenerate}, for reasons of divisor order alone; in (Z1) the meet is non-special and $\varepsilon = 0$ by \cref{cor-window}, so the hull is determined by $\deg ( G \wedge A )$ and by nothing else. The two zones may overlap and either may be empty. In (Z2) the vanishing of $\varepsilon$ is no longer automatic and its nonvanishing is a proper closed condition on the class of $K - G \wedge A$, sharpest at $t (G) = 0$, where by \cref{cor-detect} it is the vanishing of a single class in $\Pic^0 ( \XX )$, one of at least $( \sqrt q - 1 )^{2 g_\XX}$ by the Weil bounds. In (Z3) one has $W_{t (G)} ( \XX ) = \Pic^{t (G)} ( \XX )$, the condition is vacuous, and what the pair carries is the value of $\varepsilon$ rather than its vanishing.

\cref{cor-detect} asserts no converse: the linear equivalence is necessary for $\varepsilon \neq 0$ but not sufficient, since the generator of $L ( K - G \wedge A )$ must in addition be decomposable along $G$ and $A$. The excess is not a function of the pair $( G \wedge A, G \vee A )$, and the failure of the converse is exactly the extra information it carries.

%----------------------------------------------------------------
%----------------------------------------------------------------
\section{Superelliptic curves: the computable case}\label{sec-hull}

\cref{sec-window} leaves two quantities to evaluate: the Riemann-Roch dimension of the meet and the excess. On a superelliptic curve both are computable in closed form. \cref{lem-meetjoin} determines the meet and the join of the pair $( G_s, A_s )$, and \cref{prop-window} turns the four zones of \cref{defn-zones} into intervals of degrees. \cref{thm-quasi} reduces the hull to a lattice count plus the excess. On the window the excess vanishes and the hull depends on the curve only through the integer $\delta$ (\cref{thm-hull}). The meet and the join are invariant under $s \mapsto M - s$, so every asymmetry of the hull profile is excess (\cref{prop-asym}).

%\subsection{Standing hypotheses}\label{ssec-standing}
It makes the discussion easier if we denote the following conditions as: 

\begin{enumerate}
\item[(H1)] $p \nmid n$, the polynomial $f \in \F_q [x]$ is separable of degree $d \geq 3$, and $f$ has no root in $\F_q$.
\item[(H2)] $\YY$ is the set of all $\F_q$-rational points of $\XX$ lying over $\A^1 ( \F_q )$, and $D = \sum_{P \in \YY} P$, $N = \deg D = | \YY |$.
\item[(H3)] $E = \sum_{a \in \F_q} \pi^* (a) - D$ is the effective divisor collecting the points of $\pi^{-1} ( \A^1 ( \F_q ) )$ that are not $\F_q$-rational.
\item[(H4)] $n \mid q-1$, so that $\zeta_n \in \F_q$.
\end{enumerate}

\begin{lem}\label{lem-standing}
Assume \textup{(H1)}--\textup{(H3)}. Then $\Ram$, $E$ and $D_\infty$ are reduced and have pairwise disjoint supports, all three are disjoint from $\Supp D$, and
\(
\deg \Ram = d\), \(\deg D_\infty = c\), and \( \deg E = qn - N\). 
Moreover $E = 0$ if and only if $N = qn$, and $g_\XX \geq 1$.
\end{lem}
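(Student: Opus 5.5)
Under (H1)--(H3) I will prove the statement by reading off the fibres of $\pi$ over the three kinds of points of $\PP^1$: the roots of $f$, the $\F_q$-rational points of $\A^1$, and $\infty$.

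\emph{Reducedness and degrees.} $\Ram=\sum_{f(a)=0}Q_a$ is a sum of $d$ distinct points, since $f$ is separable of degree $d$ and the covering is totally ramified over each root; so $\deg\Ram=d$. By \cref{lem-basic} and the ramification data recorded after \cref{def-superell}, the fibre over $\infty$ consists of $c$ distinct points, so $D_\infty$ is reduced of degree $c$. For $E$, fix $a\in\F_q$. By (H1), $f(a)\neq 0$, so $a$ is not a branch point. Since $p\nmid n$, the polynomial $y^n-f(a)$ is separable, and $\pi^*(a)$ is a reduced divisor of degree $n$. Its points are the places of $\XX$ above $a$, of varying residue degrees. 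The $\F_q$-rational ones are exactly the points of $\YY$ over $a$, and by (H2) $D$ is their sum over all $a$. Hence $E=\sum_a\pi^*(a)-D$ is the sum of the non-rational places above $\A^1(\F_q)$, each with multiplicity one. So $E$ is effective and reduced, disjoint from $\Supp D$, and $\deg E=qn-N$.

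\emph{Disjointness.} The three divisors lie over pairwise disjoint sets of $\PP^1$: $\Ram$ over the roots of $f$, which are not in $\F_q$ by (H1); $E$ and $D$ over $\A^1(\F_q)$; $D_\infty$ over $\infty$. This gives disjointness of $\Ram$, $D_\infty$ and $E$ from each other and from $\Supp D$, except for $E$ versus $D$, which was already handled in the previous step. Hence $E=0$ exactly when $\deg E=0$, that is, when $N=qn$.

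\emph{Genus.} We have $g_\XX\geq1$ from \cref{eq-genus}, since $(n-1)(d-1)+1-c\geq 2$. Indeed $c\leq n$ gives $(n-1)(d-1)+1-c\geq (n-1)(d-1)+1-n=(n-1)(d-2)\geq 1$, using $d\geq3$ and $n\geq2$. Moreover the numerator is even because $2g_\XX$ is an integer, so the value $\geq1$ is at least $2$.

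None of these steps is a real obstacle. The only point needing care is the identification of the $\F_q$-rational places above $a\in\F_q$ with points of $\YY$, which is just (H2).
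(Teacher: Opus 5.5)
Your proof is correct and follows the paper's argument. You read off the fibres of $\pi$ over the roots of $f$, over $\A^1(\F_q)$ (unramified by (H1), with the rational points being exactly $\YY$ by (H2)), and over $\infty$. The only variation is the genus bound: you use $c \le n$ to get $(n-1)(d-1)+1-c \geq (n-1)(d-2) \geq 1$ and then integrality of $g_\XX$, which is slicker than the paper's case split $c \le 2$ versus $c \geq 3$ via $(c-1)^2 \geq c+1$. Note that the numerator is even because $g_\XX \in \Z$, not merely because $2g_\XX$ is an integer.
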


\begin{proof}
By (H1) no root of $f$ lies in $\F_q$, so $\pi$ is unramified over $\A^1 ( \F_q )$ and $\pi^* (a)$ is reduced of degree $n$ for every $a \in \F_q$. Hence $D$ and $E$ are reduced with disjoint supports, $\deg D + \deg E = qn$, and $E = 0$ if and only if $N = qn$. By definition $\Ram$ is the sum of the $d$ points over the roots of $f$, none of which lies in $\F_q$, and $D_\infty$ is the sum of the $c$ points over $x = \infty$. Hence $\deg \Ram = d$, $\deg D_\infty = c$, and the three supports are pairwise disjoint and disjoint from $\Supp D$. It remains to show $g_\XX \geq 1$, which by \cref{eq-genus} is the inequality $(n-1)(d-1) \geq c + 1$. Since $c \mid n$ and $c \mid d$ we have $c \leq \min \{ n, d \}$. For $c \leq 2$ the inequality follows from $n \geq 2$ and $d \geq 3$, and for $c \geq 3$ from $(n-1)(d-1) \geq (c-1)^2 \geq c+1$.
\end{proof}

Throughout we denote by 
\begin{equation}
\label{def-delta}
\Delta = \left\{ \, a \in \F_q \; : \; f(a) \notin ( \F_q^* )^n \, \right\}, \quad \delta = | \Delta |, \quad
e_\Delta (x) = \prod_{a \in \Delta} ( x - a ) \; \in \; \F_q [x] .
\end{equation}

\begin{lem}\label{lem-pullback}
Assume \textup{(H1)}--\textup{(H4)}. Then $E = \pi^* ( \Delta )$, with geometric points the $n \delta$ pairs $(a,b)$ where $a \in \Delta$ and $b^n = f(a)$. Moreover
\[
N = n \, ( q - \delta ), \quad \deg E = n \delta , \quad \dv \left( e_\Delta (x) \right) = E - \frac{n \delta}{c} \, D_\infty ,
\]
and $e_\Delta$ vanishes at no point of $\YY$.
\end{lem}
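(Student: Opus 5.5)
The plan is to analyse the fibre $\pi^{-1}(a)$ over each $a \in \F_q$ using (H1) and (H4). Fix $a \in \F_q$. By (H1), $f(a) \neq 0$, so $\pi$ is unramified over $a$, and $\pi^*(a)$ is the reduced divisor of the $n$ geometric points $(a,b)$ with $b^n = f(a)$.

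By (H4), $\zeta_n \in \F_q$, and these $n$ points form one orbit under $\tau$. The $\F_q$-rationality of $(a,b)$ is the condition $b \in \F_q$. If some $b \in \F_q$ satisfies $b^n = f(a)$, then so does every $\zeta_n^j b$, and all $n$ points are rational. Otherwise none of them is. So each fibre is either entirely in $\YY$ (when $f(a) \in (\F_q^*)^n$) or entirely in $\Supp E$ (when $a \in \Delta$).

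Since the points over $\A^1(\F_q)$ are affine, they lie in the smooth locus of $\Pw$; this is needed to be in $\YY$ under (H2). Hence $D = \sum_{a \notin \Delta} \pi^*(a)$. By (H3) we then get $E = \sum_{a \in \Delta} \pi^*(a) = \pi^*(\Delta)$, where $\Delta$ is read as a reduced divisor on $\PP^1$. The degree counts $\deg E = n\delta$ and $N = n(q-\delta)$ follow immediately.

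For the divisor of $e_\Delta$, use \cref{lem-basic} in the form $\dv(x-a) = \pi^*(a) - \tfrac{n}{c} D_\infty$. This form is valid for any $a \in \F_q$, not just roots of $f$, because the pole divisor of $x$ is $\pi^*(\infty) = \tfrac{n}{c}D_\infty$. Summing over $a \in \Delta$ gives $E - \tfrac{n\delta}{c} D_\infty$.

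Finally, $e_\Delta$ has zeros only on $\Supp E$, which is disjoint from $\Supp D$ by \cref{lem-standing}. So $e_\Delta$ vanishes at no point of $\YY$.

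The only delicate step is the all-or-nothing rationality of fibres. This is exactly where (H4) enters: it guarantees that the $\tau$-orbit consists of Frobenius-compatible translates by $\zeta_n \in \F_q$. Everything else is bookkeeping.
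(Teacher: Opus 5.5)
Your proposal is correct and follows essentially the same route as the paper: the all-or-nothing rationality of each unramified fibre via $\zeta_n \in \F_q$, followed by summing $\dv(x-a) = \pi^*(a) - \tfrac{n}{c} D_\infty$ over $a \in \Delta$. The differences are only cosmetic. You obtain the non-vanishing of $e_\Delta$ on $\YY$ from the disjointness in \cref{lem-standing}, whereas the paper reads it off directly from the fibre dichotomy. You also add a harmless remark about the smooth locus.
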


\begin{proof}
Let $a \in \F_q$. By (H1) the fibre $\pi^* (a)$ is reduced of degree $n$, and its points are the pairs $(a,b)$ with $b^n = f(a) \neq 0$. If one such $b$ lies in $\F_q$ then all of them do, since they are the $b \zeta_n^k$ and $\zeta_n \in \F_q$ by (H4). Hence either $\pi^* (a)$ consists of $n$ rational points, and then $a \notin \Delta$ and $\pi^* (a) \leq D$, or it contains no rational point, and then $a \in \Delta$ and $\pi^* (a) \leq E$. This proves $E = \pi^* ( \Delta )$ and the description of the geometric points. It follows that $\deg E = n \delta$ and $N = qn - n \delta$. For the divisor of $e_\Delta$, each $a \in \Delta$ is not a root of $f$. Hence $\dv ( x - a ) = \pi^* (a) - \tfrac{n}{c} D_\infty$, because $(x)_\infty = \tfrac{n}{c} D_\infty$ by \cref{lem-basic}. Summing over $a \in \Delta$ gives the assertion. Finally every point of $\YY$ lies over some $a \notin \Delta$, so $e_\Delta$ does not vanish on $\YY$.
\end{proof}

Throughout we denote  \(M = \frac{(q-1)n}{c} - 1\) and \(\mu (s) = \min \{ s, \, M - s \}\), 
and we attach to each $s \geq 1$ the divisor $G_s = s D_\infty$, so that $\CC_s = C_L ( D, G_s )$ by \cref{eq-identification}.
%
%\subsection{The dual family}\label{ssec-dualfamily}
%
The differential below is the standard device producing residues equal to one at the evaluation points. See \cite{st}, and \cite{el-sh} for its use on superelliptic curves.

\begin{prop}\label{prop-dual}
Assume \textup{(H1)}--\textup{(H3)}. Then the differential $\eta = - dx / ( x^q - x )$ satisfies $v_P ( \eta ) = -1$ and $\Res_P ( \eta ) = 1$ for every $P \in \Supp D$, and
\begin{equation}\label{eq-eta}
K \; = \; \dv ( \eta ) \; = \; (n-1) \Ram - D - E + \left( \frac{(q-1) n}{c} - 1 \right) D_\infty .
\end{equation}
Consequently, for every $s \geq 1$,
\begin{equation}\label{eq-dual}
\CC_s^\perp = C_L ( D, A_s ), \qquad A_s = (n-1) \Ram - E + ( M - s ) D_\infty .
\end{equation}
\end{prop}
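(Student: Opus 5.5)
We need to prove Proposition prop-dual: compute the divisor of $\eta=-dx/(x^q-x)$, check that it has a simple pole with residue $1$ at each point of $D$, and derive the formula for $A_s$.

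\emph{Step 1: the divisor of $x^q-x$.} I would write $x^q-x=\prod_{a\in\F_q}(x-a)$. By (H1) no root of $f$ lies in $\F_q$, so each fibre $\pi^*(a)$ over $a\in\F_q$ is reduced, and \cref{lem-basic} gives $\dv(x-a)=\pi^*(a)-\tfrac nc D_\infty$. Summing over $a\in\F_q$ and using the definition of $E$ in (H3) yields
\[
\dv(x^q-x)=\sum_{a\in\F_q}\pi^*(a)-\tfrac{qn}{c}D_\infty = D+E-\tfrac{qn}{c}D_\infty .
\]

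\emph{Step 2: the divisor of $\eta$.} Combining Step~1 with $\dv(dx)=(n-1)\Ram-(\tfrac nc+1)D_\infty$ from \cref{lem-basic} gives
\[
\dv(\eta)=(n-1)\Ram-D-E+\Bigl(\tfrac{qn}{c}-\tfrac nc-1\Bigr)D_\infty ,
\]
which is \cref{eq-eta}, since $\tfrac{qn}{c}-\tfrac nc-1=\tfrac{(q-1)n}{c}-1=M$. The supports of $\Ram$, $E$, $D_\infty$ and $D$ are pairwise disjoint by \cref{lem-standing}, so the coefficient of every $P\in\Supp D$ is exactly $-1$, i.e. $v_P(\eta)=-1$.

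\emph{Step 3: the residues.} This is the only step needing care. Let $P\in\Supp D$ lie over $a\in\F_q$. Since $\pi$ is unramified at $P$ (the point $a$ is not a root of $f$), $t=x-a$ is a local parameter at $P$. Write $x^q-x=t\,u(x)$ with $u(x)=\prod_{b\ne a}(x-b)$. Then $u(a)=\prod_{c\in\F_q^*}c=-1$ by Wilson's theorem for $\F_q$ (equivalently, $u(a)$ is the derivative of $x^q-x$ at $a$, which is $qa^{q-1}-1=-1$). Hence
\[
\eta=-\frac{dt}{t\,u}, \qquad \Res_P(\eta)=-\frac{1}{u(a)}=1 .
\]

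\emph{Step 4: the dual family.} Applying \cref{eq-duality} with $G=sD_\infty$ gives $\CC_s^\perp=C_L(D,D-sD_\infty+K)$, and substituting \cref{eq-eta}, the $D$ terms cancel:
\[
D-sD_\infty+K=(n-1)\Ram-E+(M-s)D_\infty = A_s .
\]
Finally, $A_s$ has support disjoint from $\Supp D$ by \cref{lem-standing}, so $C_L(D,A_s)$ is defined.
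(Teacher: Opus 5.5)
Your proposal is correct and follows the paper's proof essentially step for step. Both arguments factor $x^q-x=\prod_{a\in\F_q}(x-a)$ to get $\dv(x^q-x)=D+E-\tfrac{qn}{c}D_\infty$, subtract this from $\dv(dx)$, compute the residue as $-1/\prod_{b\neq a}(a-b)=1$, and substitute into \cref{eq-duality}.

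The only difference is cosmetic: you read off $v_P(\eta)=-1$ from the global divisor together with the disjointness in \cref{lem-standing}, whereas the paper obtains it locally from $x-a$ being a uniformiser at $P$. Separately, avoid reusing $c$ as the dummy variable in $\prod_{c\in\F_q^*}c$, since $c=\gcd(n,d)$ throughout.
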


\begin{proof}
Let $P \in \Supp D$ lie over $a \in \F_q$. Since $P$ is unramified, $x - a$ is a uniformiser at $P$, and $x^q - x = \prod_{b \in \F_q} (x - b)$ vanishes to order one at $P$. Hence $v_P ( \eta ) = -1$ and
\[
\Res_P ( \eta ) = - \frac{1}{\prod_{b \neq a} (a - b)} = - \frac{1}{(x^q - x)' (a)} = 1,
\]
because $(x^q - x)' = -1$. For \cref{eq-eta}, by \cref{lem-basic} and the definition of $E$,
\[
\begin{split}
\dv ( x^q - x ) &= \sum_{a \in \F_q} \pi^* (a) - \frac{qn}{c} D_\infty = D + E - \frac{qn}{c} D_\infty, \\
\dv ( \eta ) &= \dv (dx) - \dv ( x^q - x ) = (n-1) \Ram - \left( \tfrac{n}{c} + 1 \right) D_\infty - D - E + \tfrac{qn}{c} D_\infty,
\end{split}
\]
which is \cref{eq-eta}. Now \cref{eq-dual} follows from \cref{eq-duality} with $G = G_s$, since $D - G_s + \dv ( \eta ) = A_s$.
\end{proof}

\begin{lem}\label{lem-meetjoin}
Assume \textup{(H1)}--\textup{(H4)}, let $s \geq 1$, and let $A_s$ be as in \cref{eq-dual}. Then
\begin{equation}\label{eq-meetjoin}
\begin{split}
G_s \wedge A_s &= \mu (s) \, D_\infty - E, \\
G_s \vee A_s &= (n-1) \Ram + \max \{ s, \, M-s \} \, D_\infty .
\end{split}
\end{equation}
In particular,
\begin{equation}\label{eq-degrees}
\begin{split}
\deg ( G_s \wedge A_s ) &= c \, \mu (s) - n \delta , \\
\deg ( G_s \vee A_s ) &= c \max \{ s, \, M-s \} + (n-1) d .
\end{split}
\end{equation}
\end{lem}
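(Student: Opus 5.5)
The plan is a coefficient-by-coefficient computation, place by place. By \cref{lem-standing} the four divisors $\Ram$, $E$, $D_\infty$ and $D$ are reduced with pairwise disjoint supports. The divisor $G_s = s D_\infty$ is supported on $\Supp D_\infty$ alone. By \cref{eq-dual}, $A_s = (n-1)\Ram - E + (M-s) D_\infty$ is supported on $\Supp \Ram \cup \Supp E \cup \Supp D_\infty$. Since the meet and join are formed pointwise, it suffices to compare the coefficients of $G_s$ and $A_s$ at three kinds of places: points of $\Ram$, points of $E$, and points of $D_\infty$. All other places have coefficient zero in both divisors.

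At a point $Q_a$ of $\Ram$, the coefficient of $G_s$ is $0$ and that of $A_s$ is $n-1 \geq 1$. So the minimum is $0$ and the maximum is $n-1$, which contributes $(n-1)\Ram$ to the join and nothing to the meet.

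At a point of $E$, the coefficient of $G_s$ is $0$ and that of $A_s$ is $-1$, because $E$ is reduced. So the minimum is $-1$ and the maximum is $0$, which contributes $-E$ to the meet and nothing to the join.

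At each $P_i$ in $D_\infty$, the coefficients are $s$ and $M-s$. Here $D_\infty$ is reduced and disjoint from $\Ram$ and $E$, so no other terms interfere. The minimum is therefore $\mu(s)$ and the maximum is $\max\{s, M-s\}$.

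Summing these contributions gives \cref{eq-meetjoin}. As a consistency check, the meet plus the join should equal $G_s + A_s$, and indeed $\mu(s) + \max\{s, M-s\} = M$.

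For \cref{eq-degrees}, I will use $\deg D_\infty = c$ and $\deg \Ram = d$ from \cref{lem-standing}, and $\deg E = n\delta$ from \cref{lem-pullback}; this is the only place where (H4) enters. This gives $\deg(G_s \wedge A_s) = c\,\mu(s) - n\delta$ and $\deg(G_s \vee A_s) = c\max\{s, M-s\} + (n-1)d$.

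There is no real obstacle. The one point needing care is the disjointness and reducedness of the supports: if $\Ram$ met $E$ or $D_\infty$, the coefficients would add and the pointwise minima would change. This is guaranteed by (H1), since $f$ has no rational root, through \cref{lem-standing}. Note also that $\mu(s)$ may be negative when $s > M$; the formula holds regardless, since it only records pointwise minima.
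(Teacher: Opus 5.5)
Your proof is correct and is the same place-by-place comparison the paper uses. Like the paper, you take disjointness of the supports of $\Ram$, $E$ and $D_\infty$ from \cref{lem-standing}, read off the coefficient pairs $(0,n-1)$, $(0,-1)$ and $(s,M-s)$ along them, and get the degrees from \cref{lem-standing} and \cref{lem-pullback}.
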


\begin{proof}
By \cref{lem-standing} the divisors $\Ram$, $E$ and $D_\infty$ have pairwise disjoint supports. The coefficientwise minimum and maximum of $G_s$ and $A_s$ may therefore be computed one support at a time. By \cref{eq-dual}, along $\Supp \Ram$ the coefficients are $0$ and $n-1$, along $\Supp E$ they are $0$ and $-1$, along $\Supp D_\infty$ they are $s$ and $M-s$, and elsewhere both vanish. This gives \cref{eq-meetjoin}, and \cref{eq-degrees} follows from \cref{lem-standing,lem-pullback}.
\end{proof}

%\subsection{The four zones as intervals of degrees}\label{ssec-windowdef}

By \cref{lem-meetjoin} the quantity $t ( G_s )$ of \cref{cor-detect} is
\begin{equation}\label{eq-tofs}
t (s) \; := \; t ( G_s ) \; = \; 2 g_\XX - 2 - c \, \mu (s) + n \delta ,
\end{equation}
a function of $s$ alone once $(n,d,q,\delta)$ are fixed. The function $\mu$ is a tent function, unimodal with slopes $\pm 1$ and maximum $\lfloor M/2 \rfloor$. Hence $t$ is convex and piecewise affine with slopes $\mp c$, with minimum $2 g_\XX - 2 - c \lfloor M/2 \rfloor + n \delta$ at $s = \lfloor M/2 \rfloor$. The conditions (Z1)--(Z3) of \cref{defn-zones} depend on $s$ only through $t (s)$. Since $t$ is convex, each of them defines a union of at most two intervals of degrees.

\begin{defn}\label{defn-window}
The \textbf{window} and the \textbf{detection band} of $\XX$ are
\[
W ( \XX ) := \left\{ \, s \geq 1 \; : \; t (s) < 0 \, \right\} , \quad B ( \XX ) := \left\{ \, s \geq 1 \; : \; 0 \leq t (s) \leq g_\XX - 1 \, \right\} .
\]
\end{defn}

\begin{prop}\label{prop-window}
Assume \textup{(H1)}--\textup{(H4)} and set
\[
\mu_0 = \left\lfloor \frac{2 g_\XX - 2 + n \delta}{c} \right\rfloor + 1 , \qquad \mu_1 = \left\lceil \frac{g_\XX - 1 + n \delta}{c} \right\rceil .
\]
Then the following hold.
\begin{enumerate}
\item $W ( \XX ) = \left\{ \, s \; : \; \mu_0 \leq s \leq M - \mu_0 \, \right\}$, an interval of integers symmetric about $M/2$, and $\varepsilon ( G_s, A_s ) = 0$ for every $s \in W ( \XX )$.
\item $W ( \XX ) \neq \emptyset$ if and only if
\begin{equation}\label{eq-window-nonempty}
c \left\lfloor M/2 \right\rfloor - n \delta \; > \; 2 g_\XX - 2 .
\end{equation}
\item $B ( \XX ) = \left\{ \, s \; : \; \mu_1 \leq s \leq \mu_0 - 1 \, \right\} \cup \left\{ \, s \; : \; M - \mu_0 + 1 \leq s \leq M - \mu_1 \, \right\}$, two intervals of $\mu_0 - \mu_1$ consecutive integers each, disjoint from $W ( \XX )$ and adjacent to it on either side. For $c = 1$ each of them has exactly $g_\XX$ elements.
\end{enumerate}
\end{prop}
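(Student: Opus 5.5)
The proof is a direct translation of the zone conditions through the formula \cref{eq-tofs}, $t(s) = 2 g_\XX - 2 - c\,\mu(s) + n\delta$. The only inputs are integrality of $\mu(s)$ and the tent shape of $\mu$. Two preliminary facts are used throughout. First, $g_\XX \geq 1$ by \cref{lem-standing}, so $2g_\XX - 2 + n\delta \geq 0$ and hence $\mu_0 \geq 1$. Second, for integer $s$ one has $\mu(s) = s$ when $s \leq M/2$ and $\mu(s) = M - s$ when $s \geq M/2$.

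For (1), I will show that $t(s) < 0$ is equivalent to $c\,\mu(s) > 2g_\XX - 2 + n\delta$. Since $\mu(s)$ is an integer, this is in turn equivalent to $\mu(s) \geq \mu_0$. Because $\mu(s) = \min\{s, M-s\}$, the condition $\mu(s) \geq \mu_0$ means exactly $s \geq \mu_0$ and $M - s \geq \mu_0$. This gives the interval $[\mu_0, M - \mu_0]$, which is visibly symmetric about $M/2$. The constraint $s \geq 1$ is automatic because $\mu_0 \geq 1$. The vanishing of the excess on $W(\XX)$ is then \cref{cor-detect}(1), or equivalently \cref{cor-window}(2), applied to $G_s$ with the meet computed in \cref{lem-meetjoin}.

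For (2), the interval $[\mu_0, M - \mu_0]$ is nonempty if and only if $\mu_0 \leq \lfloor M/2 \rfloor$, that is, $2\mu_0 \leq M$. By definition of $\mu_0$ as a floor plus one, the inequality $\lfloor M/2\rfloor \geq \mu_0$ is equivalent to $c\lfloor M/2 \rfloor > 2g_\XX - 2 + n\delta$, which is \cref{eq-window-nonempty}.

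For (3), I will rewrite $0 \leq t(s) \leq g_\XX - 1$ as
\[
g_\XX - 1 + n\delta \;\leq\; c\,\mu(s) \;\leq\; 2g_\XX - 2 + n\delta.
\]
Integrality of $\mu(s)$ turns the lower bound into $\mu(s) \geq \mu_1$ by the ceiling, and the upper bound into $\mu(s) \leq \mu_0 - 1$ by the floor. Splitting at $M/2$ and substituting $\mu(s) = s$ on one side and $\mu(s) = M - s$ on the other produces the two stated intervals, each of $\mu_0 - \mu_1$ integers. They are adjacent to $W(\XX)$ because they end at $\mu_0 - 1$ and begin at $M - \mu_0 + 1$. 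For $c = 1$ the floor and ceiling are exact, giving $\mu_0 - \mu_1 = (2g_\XX - 1 + n\delta) - (g_\XX - 1 + n\delta) = g_\XX$.

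The main obstacle is bookkeeping at the edges of this case split, and I see two points to handle.

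The first is the splitting at $M/2$. It is clean only when the lower interval lies in $s \leq M/2$ and the upper one in $s \geq M/2$. This holds when $\mu_0 - 1 \leq \lfloor M/2 \rfloor$, in particular whenever the window is nonempty. If the window is empty, the literal union formula can contain values $s$ whose $\mu(s)$ is computed from the other branch. I would verify this case, or restrict the stated description to $\mu_0 - 1 \leq \lfloor M/2\rfloor$.

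The second is the possibility $\mu_1 = 0$, which occurs when $g_\XX = 1$ and $\delta = 0$. Then the constraint $s \geq 1$ trims the lower interval by one element, and I would record this explicitly.

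Apart from these two edge checks, the argument is routine.
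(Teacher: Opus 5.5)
Your argument is the paper's own. You rewrite each zone condition through \cref{eq-tofs} as a bound on the integer $\mu(s)$, unfold $\mu(s)=\min\{s,M-s\}$ on either side of $M/2$, and take the vanishing of the excess from \cref{cor-window} (equivalently \cref{cor-detect}(1)). The two edge cases you flag are genuine and the paper's proof passes over them. When $\mu_1=0$ (genus one, $\delta=0$) the point $s=0$ must be discarded. When the window is empty the two-interval formula for $B(\XX)$ can fail outright: for the $\Z_2$ line of \cref{tab-strata} ($q=11$, $\delta=5$) one gets $\mu_0=7$ and $\mu_1=6$, and the formula returns $\{3,6\}$ although $B(\XX)=\emptyset$. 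So restricting (3) to $\mu_0-1\leq\lfloor M/2\rfloor$, as you propose, is the correct fix, not a case that could be verified.
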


\begin{proof}
By \cref{eq-tofs} the condition $t (s) < 0$ reads $c \, \mu (s) > 2 g_\XX - 2 + n \delta$, that is $\mu (s) \geq \mu_0$, and the condition $0 \leq t (s) \leq g_\XX - 1$ reads $\mu_1 \leq \mu (s) \leq \mu_0 - 1$. Since $\mu (s) = s$ for $s \leq M/2$ and $\mu (s) = M - s$ for $s \geq M/2$, these give the descriptions of $W ( \XX )$ in (1) and of $B ( \XX )$ in (3). The vanishing of the excess in (1) is \cref{cor-window}, whose hypothesis $\deg ( G_s \wedge A_s ) > 2 g_\XX - 2$ is $t (s) < 0$ by \cref{eq-tofs}. For (2), the window is nonempty precisely when $\mu$ attains $\mu_0$, that is when $\lfloor M/2 \rfloor \geq \mu_0$, which is \cref{eq-window-nonempty}. Finally, for $c = 1$ one has $\mu_0 = 2 g_\XX - 1 + n \delta$ and $\mu_1 = g_\XX - 1 + n \delta$, so $\mu_0 - \mu_1 = g_\XX$.
\end{proof}

The degenerate zone (Z0) is also an interval, and it is nonempty exactly on the totally split locus.

\begin{defn}\label{def-split}
The curve $\XX$ is \textbf{totally split} over $\F_q$ if every fibre of $\pi$ over $\A^1 ( \F_q )$ consists of $n$ rational points, that is if $f(a) \in ( \F_q^* )^n$ for every $a \in \F_q$. Equivalently $\Delta = \emptyset$, or $\delta = 0$, or $E = 0$, or $N = qn$.
\end{defn}

\begin{prop}\label{prop-dichotomy}
Assume \textup{(H1)}--\textup{(H4)} and let $s \geq 1$. Then $A_s \not\leq G_s$ for every $s$, and $G_s \leq A_s$ if and only if $\XX$ is totally split and $s \leq \lfloor M/2 \rfloor$. In that case $\CC_s$ is self-orthogonal and $\varepsilon ( G_s, A_s ) = 0$.
\end{prop}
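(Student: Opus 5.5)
The plan is to compare $G_s = s D_\infty$ and $A_s = (n-1)\Ram - E + (M-s) D_\infty$ coefficient by coefficient. By \cref{lem-standing} the supports of $\Ram$, $E$ and $D_\infty$ are pairwise disjoint, so the comparison splits into four independent local conditions, exactly as in the proof of \cref{lem-meetjoin}. At each place the coefficient pairs $(G_s, A_s)$ are:
\begin{itemize}
\item[(i)] $(0, n-1)$ along $\Supp \Ram$;
\item[(ii)] $(0,-1)$ along $\Supp E$;
\item[(iii)] $(s, M-s)$ along $\Supp D_\infty$;
\item[(iv)] $(0,0)$ elsewhere.
\end{itemize}
A divisor inequality holds if and only if it holds at every place, so each assertion reduces to reading off these pairs.

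For $A_s \not\leq G_s$, look at a point $Q_a$ of $\Supp \Ram$. There the coefficients are $n-1 > 0$ for $A_s$ and $0$ for $G_s$, because $n \geq 2$. The support of $\Ram$ is nonempty since $\deg \Ram = d \geq 3$. Hence $A_s \not\leq G_s$ for every $s$, independently of $E$ and of $s$.

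For the equivalence, condition (i) always holds and (iv) is trivial. Condition (ii) holds if and only if $\Supp E = \emptyset$, that is $E = 0$, which is the totally split condition of \cref{def-split}. Condition (iii) reads $s \leq M - s$, i.e.\ $2s \leq M$, i.e.\ $s \leq \lfloor M/2 \rfloor$ since $s$ is an integer. Combining these gives $G_s \leq A_s$ if and only if $\XX$ is totally split and $s \leq \lfloor M/2 \rfloor$.

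The final sentence then follows from \cref{prop-degenerate} with $A = A_s = D - G_s + K$, where $K = \dv(\eta)$ is as in \cref{prop-dual}, so that $C_L(D,A_s) = \CC_s^\perp$ by \cref{eq-dual}. That proposition gives $\CC_s \subseteq \CC_s^\perp$, so $\CC_s$ is self-orthogonal, and it gives $\varepsilon(G_s,A_s) = 0$. There is no real obstacle. The only points needing care are that $\Ram$ and $E$ really are disjoint from $D_\infty$ and from each other, which is \cref{lem-standing} and uses (H1) so that the roots of $f$ are not rational, and that the relevant pullback structure of $E$ comes from \cref{lem-pullback}, which uses (H4).
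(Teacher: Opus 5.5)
Your proof is correct and is essentially the paper's argument. The paper reads the same local coefficients off the difference $A_s - G_s = (n-1)\Ram - E + (M-2s)D_\infty$: the coefficient $n-1>0$ on $\Ram$ gives $A_s \not\leq G_s$, the coefficient $-1$ on $E$ forces $E=0$, and $M-2s\geq 0$ on $D_\infty$ gives the degree condition, after which it invokes \cref{prop-degenerate}. Your closing appeal to \cref{lem-pullback} and (H4) is unnecessary, since \cref{lem-standing} already gives that $\Ram$, $E$ and $D_\infty$ are reduced with pairwise disjoint supports.
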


\begin{proof}
By \cref{prop-dual} we have $A_s - G_s = (n-1) \Ram - E + ( M - 2s ) D_\infty$. The coefficient of $A_s - G_s$ along $\Supp \Ram$ is $n - 1 > 0$, so $A_s \not\leq G_s$. If $E \neq 0$ then at any place of $\Supp E$ the coefficient of $A_s - G_s$ is $-1$, so $G_s \not\leq A_s$. If $E = 0$ then $A_s - G_s = (n-1) \Ram + ( M - 2s ) D_\infty$ is effective precisely when $2s \leq M$. The last assertion is \cref{prop-degenerate}.
\end{proof}

The dual computation also fixes the designed distances of the family. These are the parameters required by the applications.

\begin{cor}\label{cor-distances}
Assume \textup{(H1)}--\textup{(H4)}. Then for every $s \geq 1$
\begin{equation}\label{eq-degA}
\deg A_s = (n-1) d - n \delta + c ( M - s ) = N + 2 g_\XX - 2 - cs ,
\end{equation}
and, writing $d ( \, \cdot \, )$ for the minimum distance,
\[
\begin{split}
d ( \CC_s ) &\geq N - cs \qquad \text{if } cs < N , \\
d ( \CC_s^\perp ) &\geq cs - 2 g_\XX + 2 \qquad \text{if } cs > 2 g_\XX - 2 .
\end{split}
\]
Both hypotheses hold for every $s \in W ( \XX )$, and whenever both hold
\[
d ( \CC_s ) + d ( \CC_s^\perp ) \; \geq \; N - 2 g_\XX + 2 ,
\]
so the two designed distances trade off linearly in $s$ with constant sum.
\end{cor}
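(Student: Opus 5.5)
Three things need proving, in this order: the formula for $\deg A_s$, the two minimum-distance bounds, and the claim about the window together with the sum. Each reduces to degree bookkeeping on divisors already computed, plus the standard Goppa bound $d(C_L(D,G)) \geq N - \deg G$.

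First, $\deg A_s$. I read it off \cref{eq-dual}, using $\deg \Ram = d$ and $\deg D_\infty = c$ from \cref{lem-standing} and $\deg E = n\delta$ from \cref{lem-pullback}. This gives $(n-1)d - n\delta + c(M-s)$. For the second expression I note that $A_s = D - G_s + K$. Taking degrees, with $\deg K = 2g_\XX - 2$ and $\deg G_s = cs$, gives $N + 2g_\XX - 2 - cs$ at once. Equating the two expressions also serves as a consistency check against \cref{eq-genus}, but it is not needed.

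Second, the distance bounds. \cref{eq-identification} gives $\CC_s = C_L(D, sD_\infty)$. The Goppa bound applies whenever $\deg G < N$: a nonzero $h\in L(G)$ vanishing at $w$ points of $D$ lies in $L(G - \sum P)$, so $w \leq \deg G$. This yields $d(\CC_s) \geq N - cs$ for $cs < N$. For the dual, \cref{prop-dual} gives $\CC_s^\perp = C_L(D,A_s)$, so the same bound gives $d(\CC_s^\perp) \geq N - \deg A_s = cs - 2g_\XX + 2$. Its hypothesis $\deg A_s < N$ is exactly $cs > 2g_\XX - 2$ by \cref{eq-degA}. Adding the two bounds when both hold gives $N - 2g_\XX + 2$, independent of $s$.

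Third, the main point to check: that both hypotheses hold on $W(\XX)$. For $s \in W(\XX)$, \cref{prop-window} gives $\mu(s) \geq \mu_0$, so $c\,\mu(s) > 2g_\XX - 2 + n\delta \geq 2g_\XX - 2$. Since $s \geq \mu(s)$, this gives $cs > 2g_\XX - 2$. For $cs < N$ I use the other end of the window, $s \leq M - \mu_0$. Then $c(M-s) \geq c\mu_0 > 2g_\XX - 2 + n\delta$. Substituting into the first form of \cref{eq-degA} gives $\deg A_s > (n-1)d + 2g_\XX - 2 \geq 2g_\XX - 2$. By the second form of \cref{eq-degA} this is exactly $N - cs > 0$. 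Alternatively, \cref{cor-window}(1) turns $t(s) < 0$ into $\deg(G_s \vee A_s) < N$, and $G_s \leq G_s \vee A_s$ gives $cs < N$ directly; I would present this version as the cleaner one. I expect no real obstacle; the only care needed is the symmetric use of both ends of the window.
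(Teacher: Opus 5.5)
Your proposal is correct and follows the paper's proof essentially step for step. You get $\deg A_s$ from \cref{eq-dual} and from $G_s + A_s = D + K$, apply the bound $N - \deg G$ to both $G_s$ and $A_s$, and on the window use $cs \geq c\,\mu(s) > 2g_\XX - 2$ together with $cs \leq \deg(G_s \vee A_s) = N + t(s) < N$. The version of that last step you call cleaner is exactly the paper's, and your alternative via the endpoint $s \leq M - \mu_0$ is equally valid.
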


\begin{proof}
The first expression for $\deg A_s$ is \cref{eq-dual} together with $\deg \Ram = d$, $\deg E = n \delta$ and $\deg D_\infty = c$. The second follows from $G_s + A_s = D + K$ and $\deg K = 2 g_\XX - 2$. A nonzero $h \in L(G)$ has at most $\deg G$ zeros on $\Supp D$, so a nonzero codeword of $C_L (D,G)$ has weight at least $N - \deg G$. Applying this to $G_s$ and to $A_s$ and using \cref{eq-degA} gives the two bounds, and adding them gives the last one. For $s \in W ( \XX )$ \cref{prop-window} gives $c \mu (s) > 2 g_\XX - 2 + n \delta$, hence $cs \geq c \mu (s) > 2 g_\XX - 2$, while $c \max \{ s, M-s \} + (n-1) d = N + t (s) < N$ by \cref{eq-degrees,eq-tofs}.
\end{proof}

%\subsection{The Riemann-Roch side as a lattice count}\label{ssec-pullback}

The divisor $\mu (s) D_\infty - E$ has support at both ends of the curve and is not of the form treated by \cref{lem-bridge}. The next lemma removes $E$ at the cost of a uniform shift of the degree. The whole family is then again governed by lattice points in the weighted triangle. For $s \geq 1$ define
\begin{equation}\label{def-nu}
\nu (s) \; = \; \mu (s) - \frac{n \delta}{c} .
\end{equation}
This is an integer because $c \mid n$.

For $\nu \in \Z$ define
\begin{equation}\label{eq-Phi}
\Phi ( \nu ) \; = \; \sum_{j=0}^{n-1} \max \left\{ 0, \; \left\lfloor \frac{c \nu - dj}{n} \right\rfloor + 1 \right\} .
\end{equation}
For $\nu \geq 0$ this is \cref{eq-ell}, and for $\nu < 0$ every summand vanishes, so $\Phi ( \nu ) = \ell ( \nu D_\infty )$ for every $\nu \in \Z$.

\begin{lem}\label{lem-shift}
Assume \textup{(H1)}--\textup{(H4)}, let $\mu \in \Z$, and let $\nu = \mu - \tfrac{n\delta}{c}$. Then multiplication by $e_\Delta (x)$ is an isomorphism
\[
L \left( \nu D_\infty \right) \; \xrightarrow{\ \sim \ } \; L \left( \mu D_\infty - E \right),
\]
and the diagonal matrix $\operatorname{diag} \left( e_\Delta (P) \right)_{P \in \YY}$ carries $C_L ( D, \nu D_\infty )$ onto $C_L ( D, \mu D_\infty - E )$. In particular the two codes are monomially equivalent and
\begin{equation}\label{eq-shifted-count}
\ell \left( \mu D_\infty - E \right) \; = \; \Phi ( \nu ) .
\end{equation}
\end{lem}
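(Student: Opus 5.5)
The plan is to compute divisors and use that multiplication by a function $h$ maps $L(G)$ isomorphically onto $L(G - \dv(h))$. By \cref{lem-pullback}, $\dv(e_\Delta(x)) = E - \tfrac{n\delta}{c} D_\infty$, and $\tfrac{n\delta}{c}$ is an integer since $c \mid n$. Hence
\[
\mu D_\infty - E - \dv(e_\Delta) \; = \; \mu D_\infty - E - E + \tfrac{n\delta}{c} D_\infty ,
\]
which has the wrong sign, so the correct bookkeeping goes the other way. For $h \in L(\nu D_\infty)$ we have $\dv(h e_\Delta) = \dv(h) + E - \tfrac{n\delta}{c} D_\infty \geq -\nu D_\infty + E - \tfrac{n\delta}{c} D_\infty = -(\mu D_\infty - E)$. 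So $h e_\Delta \in L(\mu D_\infty - E)$.

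Conversely, given $g \in L(\mu D_\infty - E)$, the same computation run backwards shows $g / e_\Delta \in L(\nu D_\infty)$. Indeed $\dv(g/e_\Delta) = \dv(g) - E + \tfrac{n\delta}{c} D_\infty \geq -\mu D_\infty + E - E + \tfrac{n\delta}{c} D_\infty = -\nu D_\infty$. Multiplication by a nonzero function is injective and $\F_q$-linear, so this gives the asserted isomorphism. It holds for every $\mu \in \Z$, including negative $\nu$, where both spaces vanish.

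For the codes, the first point is that the evaluation maps are defined. $\Supp D$ is disjoint from $\Supp D_\infty$ and from $\Supp E$ by \cref{lem-standing}, so both $\nu D_\infty$ and $\mu D_\infty - E$ have support disjoint from $D$. For $h \in L(\nu D_\infty)$ and $P \in \YY$ we have $(h e_\Delta)(P) = e_\Delta(P)\, h(P)$. So $\ev(h e_\Delta) = \operatorname{diag}(e_\Delta(P))_{P\in\YY}\,\ev(h)$, and surjectivity of the multiplication map gives that the diagonal matrix carries $C_L(D,\nu D_\infty)$ onto $C_L(D,\mu D_\infty - E)$.

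The diagonal entries are nonzero because $e_\Delta$ vanishes at no point of $\YY$ (\cref{lem-pullback}). They lie in $\F_q$ since $e_\Delta \in \F_q[x]$ and the points are rational. The matrix is therefore an invertible monomial transformation, and the codes are monomially equivalent.

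Finally, \cref{eq-shifted-count} is the dimension count $\ell(\mu D_\infty - E) = \ell(\nu D_\infty) = \Phi(\nu)$. The second equality is the observation recorded after \cref{eq-Phi}: it is \cref{lem-bridge}(3) for $\nu \geq 0$, and both sides vanish for $\nu < 0$. No step is a real obstacle. The only care needed is the sign bookkeeping in the divisor inequalities and checking that the diagonal entries are nonzero elements of $\F_q$.
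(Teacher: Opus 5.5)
Your proof is correct and follows the paper's argument: the identity $\dv(e_\Delta(x)) = E - \tfrac{n\delta}{c}D_\infty$ from \cref{lem-pullback} makes multiplication by $e_\Delta(x)$ (the paper uses division) a bijection $L(\nu D_\infty)\to L(\mu D_\infty - E)$, and the nonvanishing of $e_\Delta$ on $\YY$ then gives both the monomial equivalence and $\ell(\mu D_\infty - E)=\Phi(\nu)$. The opening \emph{wrong sign} computation is only a false start, where you applied $L(G)\to L(G-\dv(h))$ with $G=\mu D_\infty - E$ instead of $G=\nu D_\infty$; your subsequent inequalities are right, so that sentence can be deleted.
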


\begin{proof}
For $h \in \F_q ( \XX )^*$ let $h' = h / e_\Delta (x)$. By \cref{lem-pullback},
\[
\dv (h') + \nu D_\infty \; = \; \dv (h) - E + \tfrac{n\delta}{c} D_\infty + \left( \mu - \tfrac{n\delta}{c} \right) D_\infty \; = \; \dv (h) + \mu D_\infty - E ,
\]
so $h \mapsto h'$ is an $\F_q$-linear bijection $L ( \mu D_\infty - E ) \to L ( \nu D_\infty )$. Since $h = h' e_\Delta (x)$ and $e_\Delta (P) \neq 0$ for every $P \in \YY$ by \cref{lem-pullback}, the diagonal matrix carries $C_L ( D, \nu D_\infty )$ onto $C_L ( D, \mu D_\infty - E )$. Finally the bijection gives $\ell ( \mu D_\infty - E ) = \ell ( \nu D_\infty ) = \Phi ( \nu )$, which is \cref{eq-shifted-count}.
\end{proof}

\begin{thm}\label{thm-quasi}
Assume \textup{(H1)}--\textup{(H4)} and let $s \geq 1$ satisfy $c \, \nu (s) < N$. Then
\begin{equation}\label{eq-quasi}
\dim \Hull ( \CC_s ) \; = \; \Phi \left( \nu (s) \right) + \varepsilon ( G_s, A_s ),
\end{equation}
where $\varepsilon ( G_s, A_s )$ is the excess of \cref{thm-meet}. Moreover the following hold.
\begin{enumerate}
\item $\Phi$ is a quasi-polynomial in $\nu$ of degree one, with leading coefficient $c$ and quasi-period dividing $n/c$.
\item $\Phi ( \nu (s) ) = c \, \nu (s) + 1 - g_\XX$ if and only if $s \in W ( \XX )$.
\item $\varepsilon ( G_s, A_s ) = 0$ for $s \in W ( \XX )$, and in general
\[
0 \; \leq \; \varepsilon ( G_s, A_s ) \; \leq \; \ell \left( K - G_s \wedge A_s \right) , \qquad \deg \left( K - G_s \wedge A_s \right) = t (s) .
\]
\end{enumerate}
\end{thm}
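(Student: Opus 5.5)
The plan is to specialise \cref{thm-meet}(4) to the pair $(G_s, A_s)$, then evaluate its two Riemann--Roch terms with \cref{lem-meetjoin} and \cref{lem-shift}. By \cref{prop-dual}, $C_L(D,A_s) = \CC_s^\perp$, so the left side of \cref{eq-meet-exact} is $\dim \Hull(\CC_s)$. By \cref{lem-meetjoin}, $G_s \wedge A_s = \mu(s) D_\infty - E$.

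\cref{lem-shift} with $\mu = \mu(s)$ then gives $\ell(G_s \wedge A_s) = \Phi(\nu(s))$. For the correction term I will use the same device. Multiplication by $e_\Delta(x)$ identifies $L(\nu(s) D_\infty - D)$ with $L(\mu(s) D_\infty - E - D)$, because $e_\Delta$ has no zero on $\Supp D$ by \cref{lem-pullback}. The divisor $\nu(s) D_\infty - D$ has degree $c\,\nu(s) - N < 0$ by hypothesis, so $\ell(G_s \wedge A_s - D) = 0$. Substituting both terms into \cref{eq-meet-exact} gives \cref{eq-quasi}.

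For (1), I will use \cref{eq-Phi} directly. Replacing $\nu$ by $\nu + n/c$ raises each $\lfloor (c\nu - dj)/n \rfloor$ by exactly one. Once $\nu$ is large enough that every summand is positive, this gives $\Phi(\nu + n/c) = \Phi(\nu) + n$. So $\Phi$ is eventually a quasi-polynomial of degree one with leading coefficient $c$ and quasi-period dividing $n/c$. Alternatively, Riemann--Roch gives $\Phi(\nu) = c\nu + 1 - g_\XX$ for $c\nu > 2g_\XX - 2$.

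For (2), the key observation is that the canonical class is a multiple of $D_\infty$. Combining $\dv(dx)$ from \cref{lem-basic} with $\Ram \sim \tfrac{d}{c} D_\infty$, read off from $\dv(y)$, gives $K \sim k D_\infty$ with $ck = (n-1)d - n - c = 2g_\XX - 2$. Hence
\[
\Phi(\nu) - (c\nu + 1 - g_\XX) = \ell(K - \nu D_\infty) = \ell\bigl((k - \nu) D_\infty\bigr),
\]
which vanishes exactly when $\nu > k$, that is, when $c\nu > 2g_\XX - 2$. This also covers $\nu < 0$, where the right side is positive.

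It remains to convert the condition on $\nu$ into one on $s$. Since $c\,\nu(s) = c\,\mu(s) - n\delta$, the inequality $c\,\nu(s) > 2g_\XX - 2$ is $t(s) < 0$ by \cref{eq-tofs}, which is membership in $W(\XX)$ by \cref{defn-window}.

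Part (3) follows by quoting earlier results. The vanishing of the excess on the window is \cref{prop-window}(1). The bounds are \cref{eq-eps-h1} from \cref{thm-canonical}(2), and the degree of $K - G_s \wedge A_s$ is the definition of $t(s)$ together with \cref{eq-tofs}.

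The step needing the most care is the ``only if'' half of (2). It depends on $K$ being linearly equivalent to a multiple of $D_\infty$, and on checking $ck = 2g_\XX - 2$ against \cref{eq-genus}. The other delicate point is verifying that $\ell(G_s \wedge A_s - D) = 0$ really requires only $c\,\nu(s) < N$ once the shift by $e_\Delta$ is applied.
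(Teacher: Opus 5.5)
Your proposal is correct and follows essentially the paper's own proof. You specialise \cref{thm-meet}(4) via \cref{prop-dual} and \cref{lem-meetjoin}, evaluate $\ell(G_s \wedge A_s)$ by \cref{lem-shift}, and kill $\ell(G_s \wedge A_s - D)$ because its degree is $c\,\nu(s) - N < 0$. You then obtain (2) from $K \sim \frac{2g_\XX - 2}{c} D_\infty$. The only differences are cosmetic: you derive the canonical class from $\dv(dx)$ rather than from $\dv(\eta)$, and the $e_\Delta$-shift you use for the correction term is unnecessary, since $\deg(\mu(s) D_\infty - E - D) = c\,\nu(s) - N$ can be read off directly.
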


\begin{proof}
By \cref{prop-dual}, \cref{thm-meet} and \cref{lem-meetjoin},
\[
\dim \Hull ( \CC_s ) = \ell \left( \mu (s) D_\infty - E \right) - \ell \left( \mu (s) D_\infty - E - D \right) + \varepsilon ( G_s, A_s ) .
\]
The second term vanishes because $\deg ( \mu (s) D_\infty - E - D ) = c \nu (s) - N < 0$, and the first is $\Phi ( \nu (s) )$ by \cref{lem-shift}. This proves \cref{eq-quasi}.

For (1), replacing $\nu$ by $\nu + n/c$ replaces $c \nu - dj$ by $c \nu - dj + n$ and increases each positive summand by exactly one. Hence $\Phi ( \nu + n/c ) = \Phi ( \nu ) + n$ whenever every summand is positive, that is for $c \nu \geq (n-1) d$.

For (2), $\Phi ( \nu (s) ) = \ell ( \nu (s) D_\infty )$ by \cref{eq-Phi}, and Riemann-Roch gives $\ell ( \nu (s) D_\infty ) = c \, \nu (s) + 1 - g_\XX$ if and only if $\nu (s) D_\infty$ is non-special. Now $\Ram \sim \tfrac{d}{c} D_\infty$ by \cref{eq-divisors}, and $D + E \sim \tfrac{qn}{c} D_\infty$ by the proof of \cref{prop-dual}, so \cref{eq-eta} gives
\[
K \; \sim \; \left( \frac{(n-1)d - n}{c} - 1 \right) D_\infty \; = \; \frac{2 g_\XX - 2}{c} \, D_\infty
\]
by \cref{eq-genus}. Hence $K - \nu (s) D_\infty \sim \left( \tfrac{2 g_\XX - 2}{c} - \nu (s) \right) D_\infty$ is effective when $c \, \nu (s) \leq 2 g_\XX - 2$ and of negative degree otherwise. The divisor $\nu (s) D_\infty$ is therefore non-special precisely when $c \, \nu (s) > 2 g_\XX - 2$, that is, when $t (s) < 0$, which is $s \in W ( \XX )$ by \cref{defn-window}.

For (3), the vanishing on the window is \cref{prop-window}, and the bound and the degree are \cref{eq-eps-h1,eq-tofs}.
\end{proof}

\begin{cor}\label{thm-hull}
Assume \textup{(H1)}--\textup{(H4)} and let $s \in W ( \XX )$. Then
\begin{equation}\label{eq-main}
\Hull ( \CC_s ) \; = \; C_L \left( D, \, \mu (s) D_\infty - E \right) \; \cong \; C_L \left( D, \, \nu (s) D_\infty \right),
\end{equation}
where the isomorphism is the monomial equivalence of \cref{lem-shift}, and
\begin{equation}\label{eq-main-dim}
\dim \Hull ( \CC_s ) \; = \; c \, \nu (s) + 1 - g_\XX \; = \; c \, \mu (s) - n \delta + 1 - g_\XX .
\end{equation}
In particular $\dim \Hull ( \CC_s ) \geq g_\XX \geq 1$, so no $\CC_s$ with $s \in W ( \XX )$ is \textup{LCD}. The function $s \mapsto \dim \Hull ( \CC_s )$ is affine of slope $c$ on $W ( \XX ) \cap [ 1, M/2 ]$ and of slope $-c$ on $W ( \XX ) \cap [ M/2, \infty )$, with maximum $c \lfloor M/2 \rfloor - n \delta + 1 - g_\XX$. On the window the hull depends on $\XX$ only through the integer $\delta$, that is, through the number of affine rational points.
\end{cor}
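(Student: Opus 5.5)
The plan is to assemble the corollary from \cref{thm-canonical}, \cref{cor-window}, \cref{lem-meetjoin} and \cref{lem-shift}, with no new geometry. Fix $s \in W ( \XX )$, so $t (s) < 0$, which by \cref{eq-tofs} means $\deg ( G_s \wedge A_s ) = c \mu (s) - n \delta > 2 g_\XX - 2$. By \cref{prop-dual} the pair $( G_s, A_s )$ satisfies the hypotheses of \cref{thm-canonical}, with $A_s = D - G_s + K$. Then \cref{cor-window}(2) gives $\Hull ( \CC_s ) = C_L ( D, G_s \wedge A_s )$ and $\dim \Hull ( \CC_s ) = \deg ( G_s \wedge A_s ) + 1 - g_\XX$. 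Substituting $G_s \wedge A_s = \mu (s) D_\infty - E$ from \cref{lem-meetjoin} gives the equality in \cref{eq-main} and the second expression in \cref{eq-main-dim}. The first expression follows since $c \nu (s) = c \mu (s) - n \delta$ by \cref{def-nu}. The monomial equivalence with $C_L ( D, \nu (s) D_\infty )$ is \cref{lem-shift} with $\mu = \mu (s)$. As a consistency check, \cref{thm-quasi}(2) gives the same number through $\Phi ( \nu (s) )$.

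For the lower bound, $s \in W ( \XX )$ means $c \nu (s) > 2 g_\XX - 2$, so $c \nu (s) \geq 2 g_\XX - 1$ and the dimension is at least $g_\XX$. By \cref{lem-standing}, $g_\XX \geq 1$, so the hull is nonzero and $\CC_s$ is not LCD.

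For the shape of the profile, \cref{prop-window}(1) shows that $W ( \XX )$ is the integer interval $[ \mu_0, M - \mu_0 ]$, symmetric about $M/2$. On it $\mu (s) = s$ for $s \leq M/2$ and $\mu (s) = M - s$ for $s \geq M/2$. Hence $c \mu (s) - n \delta + 1 - g_\XX$ is affine with slope $c$ on the lower part and slope $-c$ on the upper part. The maximum is attained where $\mu$ is maximal. That point is $s = \lfloor M/2 \rfloor$, which lies in $W ( \XX )$ whenever the window is nonempty, by \cref{prop-window}(2). This gives the value $c \lfloor M/2 \rfloor - n \delta + 1 - g_\XX$.

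The last sentence, that on the window the hull depends on $\XX$ only through $\delta$, is read off the formula. The quantities $c$, $M$, $g_\XX$ and $\mu$ depend only on $(n,d,q)$, and by \cref{lem-pullback}, $N = n ( q - \delta )$, so $\delta$ is exactly the count of affine rational points. I do not expect a real obstacle. The only care needed is to check that $\mu (s)$ is taken at an integer $s$ when $M$ is odd, so that the maximum is $\lfloor M/2 \rfloor$, and that the hypothesis $\ell ( G_s \wedge A_s - D ) = 0$ is already built into \cref{cor-window}.
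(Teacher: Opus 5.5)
Your proposal is correct and follows the paper's own proof essentially step for step. Both apply \cref{cor-window} to the meet $\mu(s) D_\infty - E$ of \cref{lem-meetjoin}, take the monomial equivalence from \cref{lem-shift} and $g_\XX \geq 1$ from \cref{lem-standing}, and read the slopes and the maximum off the tent shape of $\mu$. Your additional check via \cref{prop-window} that $\lfloor M/2 \rfloor$ actually lies in $W(\XX)$ fills in a point the paper leaves implicit.
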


\begin{proof}
Let $s \in W ( \XX )$. Then $t (s) < 0$, so the meet $G_s \wedge A_s = \mu (s) D_\infty - E$ has degree $c \, \nu (s) > 2 g_\XX - 2$, and \cref{cor-window} yields $\Hull ( \CC_s ) = C_L ( D, \mu (s) D_\infty - E )$ together with $\dim \Hull ( \CC_s ) = c \, \nu (s) + 1 - g_\XX$. This is \cref{eq-main,eq-main-dim}, the isomorphism in \cref{eq-main} being the monomial equivalence of \cref{lem-shift}. From $c \, \nu (s) > 2 g_\XX - 2$ we get $\dim \Hull ( \CC_s ) > g_\XX - 1$, and $g_\XX \geq 1$ by \cref{lem-standing}. On $[ 1, M/2 ]$ we have $\mu (s) = s$ and on $[ M/2, \infty )$ we have $\mu (s) = M - s$, which gives the two slopes and the maximum at $\mu (s) = \lfloor M/2 \rfloor$. Finally $\dim \Hull ( \CC_s )$ depends on $\XX$ only through $\delta$, and $N = n ( q - \delta )$, so $\delta$ is determined by the number of affine rational points.
\end{proof}

\begin{thm}\label{cor-thresholds}
Assume \textup{(H1)}--\textup{(H4)}, let $I = \{ s \geq 1 : \Hull ( \CC_s ) \neq 0 \}$, and let
\[
s^{+} \; = \; M + \left\lfloor \frac{(n-1) d - n \delta}{c} \right\rfloor + 1 .
\]
Then the following hold.
\begin{enumerate}
\item $W ( \XX ) \subseteq I \subseteq [ \, 1, s^{+} - 1 \, ]$.
\item For $s \geq s^{+}$ one has $\CC_s^\perp = 0$, and $\CC_s$ is \textup{LCD}.
\item For $s \notin W ( \XX )$ with $c \, \mu (s) < n \delta$ one has $\dim \Hull ( \CC_s ) = \varepsilon ( G_s, A_s )$.
\end{enumerate}
\end{thm}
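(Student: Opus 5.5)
The plan is to derive all three parts from the degree of $A_s$ and from \cref{thm-meet}, using the fact that $\CC_s^\perp = C_L(D,A_s)$ by \cref{prop-dual}.

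For (2), I use \cref{eq-degA}, which gives $\deg A_s = (n-1)d - n\delta + c(M-s)$. If $s \geq s^+$, then $M - s \leq -\lfloor ((n-1)d - n\delta)/c \rfloor - 1$. Writing $(n-1)d - n\delta = c\lfloor\cdot\rfloor + r$ with $0 \leq r < c$, this gives $\deg A_s \leq r - c < 0$. Hence $L(A_s) = 0$, so $\CC_s^\perp = 0$, $\Hull(\CC_s) = 0$, and the code is LCD. This also yields the upper containment $I \subseteq [1, s^+-1]$ in (1).

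For the lower containment $W(\XX) \subseteq I$, \cref{thm-hull} gives $\dim \Hull(\CC_s) \geq g_\XX \geq 1$ on the window. It has to be checked that the hypothesis $c\,\nu(s) < N$ needed there holds. In the proof of \cref{thm-hull} this was supplied by \cref{cor-window}, which uses $\deg(G\wedge A) \leq \deg(G\vee A) < N$. That is automatic on $W(\XX)$ by \cref{cor-window}(1), so nothing new is needed.

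For (3), I apply \cref{eq-meet-exact} to the pair $(G_s, A_s)$ with meet $\mu(s)D_\infty - E$ from \cref{lem-meetjoin}. This meet has degree $c\,\mu(s) - n\delta < 0$, so $\ell(\mu(s)D_\infty - E) = 0$. The term $\ell(\cdot - D)$ also vanishes, so the meet contributes nothing and $\dim \Hull(\CC_s) = \varepsilon(G_s,A_s)$. Equivalently, $\nu(s) < 0$ gives $\Phi(\nu(s)) = 0$, consistent with \cref{thm-quasi}. I do not need the hypothesis $s \notin W(\XX)$ beyond noting that it is implied: a negative-degree meet is never in the window.

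The only delicate point is the floor arithmetic in (2). One must check that $s^+$ is exactly the first $s$ with $\deg A_s < 0$, rather than merely an upper bound; for the statement an upper bound suffices. Everything else is direct invocation of earlier results.
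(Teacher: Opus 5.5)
Your proof is correct and follows the paper's argument. Part (2) comes from $\deg A_s<0$ via \cref{eq-degA} and \cref{prop-dual}. Part (1) follows from (2) together with $\dim\Hull(\CC_s)\ge g_\XX\ge 1$ on the window by \cref{thm-hull}. Part (3) comes from the vanishing of the Riemann--Roch term of the meet.

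The only difference is in (3). You apply \cref{eq-meet-exact} directly to the meet $\mu(s)D_\infty - E$ of negative degree, whereas the paper routes it through \cref{eq-quasi} and $\Phi(\nu(s))=0$. These are the same quantity by \cref{lem-shift}, and your route avoids checking the hypothesis $c\,\nu(s)<N$ of \cref{thm-quasi}, which holds trivially here since $\nu(s)<0$.
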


\begin{proof}
For (2), $\deg A_s = (n-1) d - n \delta + c ( M - s )$ by \cref{eq-degA}, and $s \geq s^{+}$ gives $c ( s - M ) > (n-1) d - n \delta$. Hence $\deg A_s < 0$, so $L ( A_s ) = 0$ and $\CC_s^\perp = C_L (D, A_s) = 0$ by \cref{prop-dual}; in particular $\Hull ( \CC_s ) = 0$. For (1), the containment $W ( \XX ) \subseteq I$ holds because $\dim \Hull ( \CC_s ) \geq g_\XX \geq 1$ on the window by \cref{thm-hull}, and $I \subseteq [ \, 1, s^{+} - 1 \, ]$ follows from (2). For (3), by \cref{eq-quasi} it suffices that $\Phi ( \nu (s) ) = 0$, and this holds because $c \, \mu (s) < n \delta$ gives $\nu (s) < 0$.
\end{proof}

%--------------------------------------------------------------------------------
%\subsection{Asymmetry}\label{ssec-asym}
The meet and the join of the pair $( G_s, A_s )$ depend on $s$ only through $\mu (s)$ and $\max \{ s, M-s \}$. Both are invariant under $s \mapsto M - s$. The Riemann-Roch side of \cref{eq-quasi} is therefore symmetric about $M/2$, and the excess is the only term that can fail to be symmetric.

\begin{prop}\label{prop-asym}
Assume \textup{(H1)}--\textup{(H4)} and let $1 \leq s \leq M-1$. Then
\[
G_s \wedge A_s = G_{M-s} \wedge A_{M-s} , \qquad G_s \vee A_s = G_{M-s} \vee A_{M-s} , \qquad t (s) = t (M-s) ,
\]
and
\begin{equation}\label{eq-asym}
\dim \Hull ( \CC_s ) - \dim \Hull ( \CC_{M-s} ) \; = \; \varepsilon ( G_s, A_s ) - \varepsilon ( G_{M-s}, A_{M-s} ) .
\end{equation}
In particular, if $\varepsilon ( G_s, A_s ) = 0$ for every $s$, then $\dim \Hull ( \CC_s ) = \dim \Hull ( \CC_{M-s} )$ for every $1 \leq s \leq M-1$.
\end{prop}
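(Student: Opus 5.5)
The plan is to read everything off \cref{lem-meetjoin} and the exact formula \cref{eq-meet-exact}, with no new geometry. Since $1 \leq s \leq M-1$, both $s$ and $M-s$ are integers $\geq 1$. So \cref{prop-dual} applies to each of them: $\CC_s^\perp = C_L(D,A_s)$ and $\CC_{M-s}^\perp = C_L(D,A_{M-s})$. Hence $\Hull(\CC_s) = C_L(D,G_s) \cap C_L(D,A_s)$, and likewise for $M-s$. All the divisors involved have support disjoint from $\Supp D$ by \cref{lem-standing}, so \cref{thm-meet} can be applied to both pairs.

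First step, the three equalities. By \cref{eq-meetjoin} the meet is $\mu(s) D_\infty - E$ and the join is $(n-1)\Ram + \max\{s, M-s\} D_\infty$. The map $s \mapsto M-s$ swaps the two entries of $\{s, M-s\}$. It therefore fixes both $\mu(s) = \min\{s, M-s\}$ and $\max\{s, M-s\}$, which gives the equalities of meets and joins as divisors, not merely as classes. The equality $t(s) = t(M-s)$ then follows either from \cref{eq-tofs}, which depends on $s$ only through $\mu(s)$, or directly from the definition $t(G) = \deg(G \vee A - D)$.

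Second step, the difference formula. I will apply \cref{eq-meet-exact} to both pairs, rather than \cref{eq-quasi}, because the latter carries the extra hypothesis $c\,\nu(s) < N$ and the statement imposes none. This gives
\[
\dim \Hull(\CC_s) = \ell(G_s \wedge A_s) - \ell(G_s \wedge A_s - D) + \varepsilon(G_s, A_s),
\]
and the same expression with $s$ replaced by $M-s$. By the first step the meets coincide, so the two Riemann-Roch terms are literally identical and cancel on subtraction, leaving \cref{eq-asym}. The final assertion is then immediate: if the excess vanishes identically, the right-hand side of \cref{eq-asym} is zero.

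I expect no real obstacle. The only points needing care are to check that $M-s \geq 1$, so that $G_{M-s}$ and $A_{M-s}$ are covered by \cref{prop-dual}, and to avoid \cref{thm-quasi} so that no degree hypothesis enters. It is also worth noting why $\varepsilon$ itself need not be symmetric: $R(G,A)$ depends on $L(G_s)$ and $L(A_s)$ individually, not only on their meet and join, so nothing in the argument above forces $\varepsilon(G_s,A_s) = \varepsilon(G_{M-s},A_{M-s})$.
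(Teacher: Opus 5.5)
Your proposal is correct and follows the paper's proof essentially verbatim: the three equalities come from the invariance of $\min\{s,M-s\}$ and $\max\{s,M-s\}$ in \cref{eq-meetjoin}, and \cref{eq-asym} comes from subtracting the two instances of \cref{eq-meet-exact}, whose Riemann--Roch terms coincide because the meets coincide. Your choice of \cref{eq-meet-exact} over \cref{eq-quasi}, which avoids the hypothesis $c\,\nu(s) < N$, is also the paper's choice.
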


\begin{proof}
Both $\mu (s) = \min \{ s, M-s \}$ and $\max \{ s, M-s \}$ are invariant under $s \mapsto M-s$, so \cref{eq-meetjoin} gives the three equalities. Hence $\ell ( G_s \wedge A_s )$ and $\ell ( G_s \wedge A_s - D )$ take the same value at $s$ and at $M-s$. Since $\CC_s^\perp = C_L ( D, A_s )$ by \cref{eq-dual}, \cref{eq-meet-exact} gives $\dim \Hull ( \CC_s ) = \ell ( G_s \wedge A_s ) - \ell ( G_s \wedge A_s - D ) + \varepsilon ( G_s, A_s )$, and subtracting this identity at $M-s$ from the one at $s$ gives \cref{eq-asym}.
\end{proof}

On the totally split locus one has $G_s \leq A_s$ for every $s \leq \lfloor M/2 \rfloor$ by \cref{prop-dichotomy}, so the excess vanishes on the lower half of the range. Then \cref{eq-asym} computes the excess on the upper half from hull dimensions alone.

\begin{cor}\label{cor-asym}
Assume \textup{(H1)}--\textup{(H4)} and that $\XX$ is totally split. Then for every $s$ with $1 \leq s \leq \lfloor M/2 \rfloor$,
\[
\varepsilon ( G_s, A_s ) = 0 , \qquad \varepsilon ( G_{M-s}, A_{M-s} ) \; = \; \dim \Hull ( \CC_{M-s} ) - \dim \Hull ( \CC_s ) .
\]
\end{cor}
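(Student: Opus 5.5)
The statement to prove is \cref{cor-asym}. The plan is to combine \cref{prop-dichotomy} with \cref{prop-asym}, and essentially nothing else is needed.

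First, fix $s$ with $1 \leq s \leq \lfloor M/2 \rfloor$. Since $\XX$ is totally split, \cref{prop-dichotomy} gives $G_s \leq A_s$. Then \cref{prop-degenerate} (already quoted at the end of \cref{prop-dichotomy}) gives $\varepsilon ( G_s, A_s ) = 0$, which is the first claim.

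For the second claim, I would apply \cref{eq-asym} at this $s$. Its hypothesis $1 \leq s \leq M-1$ needs to be checked. The lower bound is given. For the upper bound, $s \leq \lfloor M/2 \rfloor \leq M - 1$ holds as soon as $M \geq 2$. Here $M = (q-1)n/c - 1$, and by (H4) we have $n \mid q-1$ and $n \geq 2$. So $q - 1 \geq n$ and $(q-1)n/c \geq n \geq 2$, which gives $M \geq 1$; in general this does not yet give $M \geq 2$. If $M = 1$, then $\lfloor M/2 \rfloor = 0$ and the range of $s$ is empty, so the statement is vacuous. Thus whenever the range is nonempty we have $\lfloor M/2 \rfloor \geq 1$, hence $M \geq 2$ and $s \leq M/2 \leq M-1$.

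Substituting $\varepsilon ( G_s, A_s ) = 0$ into \cref{eq-asym} gives $\dim \Hull ( \CC_s ) - \dim \Hull ( \CC_{M-s} ) = -\varepsilon ( G_{M-s}, A_{M-s} )$. Rearranging yields the displayed formula.

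No step here is a real obstacle. The only point that needs care is confirming that $M - s \geq 1$, so that $\CC_{M-s}$ and $G_{M-s} = (M-s) D_\infty$ are within the family where \cref{prop-asym} is stated. This follows from $s \leq M/2$ and the range check above.
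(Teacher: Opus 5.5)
Your proposal is correct and follows the paper's own proof: the vanishing comes from \cref{prop-dichotomy} (via \cref{prop-degenerate}), and the formula is \cref{eq-asym} with that vanishing substituted. Your added check that a nonempty range forces $M \geq 2$, and hence $1 \leq s \leq M-1$, is a valid detail that the paper leaves implicit.
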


\begin{proof}
The vanishing is \cref{prop-dichotomy}, and the formula is \cref{eq-asym}.
\end{proof}

Combined with \cref{cor-detect}, \cref{cor-asym} is the mechanism by which the family detects the moduli point. The two degrees $s$ and $M - s$ present the same meet, the same join, and hence the same class $[ K - G_s \wedge A_s ] \in \Pic^{t (s)} ( \XX )$ to be tested. A difference between $\dim \Hull ( \CC_s )$ and $\dim \Hull ( \CC_{M-s} )$ says that the class is effective and that the two degrees see it differently. This is possible only because the excess depends on $L ( G_s )$ and $L ( A_s )$ separately, and not on the pair $( G_s \wedge A_s, G_s \vee A_s )$. When $t (s) = 0$ the class tested is a single element of $\Pic^0 ( \XX )$, and by \cref{cor-detect} a nonzero value of \cref{eq-asym} forces
\[
(n-1) \Ram + \max \{ s, \, M-s \} \, D_\infty \; \sim \; D .
\]
%

%----------------------------------------------------------------------
\section{The excess in closed form: strata with a transitive action}\label{sec-equiv}

This section evaluates both sides of \cref{eq-quasi} on a second locus, disjoint from the totally split locus of \cref{sec-hull}. Under the criterion of \cref{prop-transitive-criterion} the abelian group $H_m \cong \Z_n \times \Z_m$ acts regularly on $\YY$. Then $\F_q^N$ is the regular representation of $H_m$, each $\CC_s$ is an ideal of $\F_q [ H_m ]$ with defining set $T_s \subseteq \Z_m \times \Z_n$, and \cref{thm-transitive-hull} computes $\Hull ( \CC_s )$ from $T_s$. By \cref{prop-window-empty} the window is empty and the curve is not totally split, so \cref{thm-hull} does not apply on this locus. Combining \cref{thm-transitive-hull} with \cref{eq-quasi} evaluates the excess in closed form (\cref{cor-epsilon-transitive}).

%\subsection{The group acting on the family}\label{ssec-stabiliser}

\begin{defn}\label{def-mf}
For $f = \sum_{i=0}^{d} a_i x^i$ satisfying \textup{(H1)} define
\[
m (f) \; = \; \gcd \left( \, q-1, \; \{ \, i \; : \; a_i \neq 0 \, \} \, \right) .
\]
\end{defn}

By construction $m(f)$ divides $q-1$ and, since $a_d \neq 0$, also $d$. Every exponent in the support of $f$ is a multiple of $m(f)$, so $f (x) = g \left( x^{m(f)} \right)$ for a unique $g \in \F_q [u]$, of degree $d / m(f)$.

\begin{prop}\label{prop-stabilizer}
Assume \textup{(H1)}--\textup{(H4)}, let $m = m(f)$, let $\lambda \in \F_q^*$ have order $m$, and let
\[
H \; = \; \left\{ \, \sigma \in \Aut ( \XX ) \; : \; \sigma \text{ is defined over } \F_q, \; \sigma \text{ preserves } \pi, \; \sigma ( \YY ) = \YY, \; \sigma ( D_\infty ) = D_\infty \, \right\} .
\]
If $p \nmid |H|$, then
\[
H \; = \; \langle \tau \rangle \times \langle \sigma_\lambda \rangle \; \cong \; \Z_n \times \Z_m, \qquad \sigma_\lambda (x,y) = ( \lambda x, \, y ) ,
\]
and $H \leq \PAut ( \CC_s )$ for every $s \geq 1$. In particular $H$ is abelian and $|H| = nm$ is prime to $p$.
\end{prop}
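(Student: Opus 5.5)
The plan is to write down every element of $H$ explicitly, then show the induced action on the $x$-line is a scaling by an element of order dividing $m$. Let $\sigma \in H$. Since $\sigma$ preserves $\pi$, it normalises $\langle \tau \rangle$ and induces an automorphism $\bar\sigma \in \overline{\Aut}(\XX) \subseteq \mathrm{PGL}_2(\F_q)$ of $\PP^1$. The condition $\sigma(D_\infty) = D_\infty$ says that $\bar\sigma$ fixes $\infty$, so $\bar\sigma(x) = \alpha x + \beta$ with $\alpha \in \F_q^*$ and $\beta \in \F_q$. Because $\bar\sigma$ preserves the branch locus, the roots of $f$, and because $\sigma$ is defined over $\F_q$, the lift has the form $\sigma(x,y) = (\alpha x + \beta, \gamma y)$ with
\[
f(\alpha x + \beta) = \gamma^n f(x) .
\]
I would derive this from the eigenspace description of $L(sD_\infty)$ in \cref{lem-bridge}: $\sigma^*$ preserves $L((d/c) D_\infty)$ and commutes with $\tau$ up to a power, hence sends $y$ to a scalar multiple of $y$. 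The kernel of $\sigma \mapsto \bar\sigma$ is $\langle \tau \rangle$, which lies in $H$: $\tau$ is defined over $\F_q$ by (H4), it fixes each fibre, so it preserves $\YY$ and $D_\infty$.

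Next I would pin down $\bar\sigma$. If $\alpha = 1$ and $\beta \neq 0$, then $\bar\sigma$ has order $p$ and $\sigma$ has order divisible by $p$, contradicting $p \nmid |H|$. So every nontrivial $\bar\sigma$ has $\alpha \neq 1$. Comparing leading coefficients gives $\gamma^n = \alpha^d$. Comparing the coefficients of $x^{d-1}$ then gives
\[
d\, a_d\, \beta = a_{d-1} (1 - \alpha) .
\]
Hence $\beta = 0$ precisely when $a_{d-1} = 0$, and in general the common fixed point of all $\bar\sigma$ is $b = -a_{d-1}/(d a_d)$.

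This is the step I expect to be the main obstacle. As literally stated, the proposition needs $b = 0$, and $m(f)$ is not invariant under translation of $x$. If $a_{d-1} = 0$ then $m(f) \geq 2$ or the group is trivial, and the argument closes directly. Otherwise I would translate $x \mapsto x - b$; this is an $\F_q$-isomorphism preserving $\YY$, $D_\infty$ and the codes. I would then read the statement with $f$ in this normal form, presumably the intended convention, and remark on it. When $p \mid d$ I would use the next nonvanishing coefficient in the same way.

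With $\beta = 0$ the identity becomes $f(\alpha x) = \gamma^n f(x)$. This holds if and only if $\alpha^i = \alpha^d$ for every $i$ with $a_i \neq 0$, that is, $\alpha^{d-i} = 1$ on the support of $f$. Since $\alpha^{q-1} = 1$ as well, this says $\alpha^{m(f)} = 1$ after rewriting the gcd condition; the exponents $d - i$ and the exponents $i$ generate the same gcd together with $q-1$ and $d$, by \cref{def-mf}. So $\bar\sigma$ ranges over the cyclic group generated by $x \mapsto \lambda x$. For $\alpha = \lambda$ we may take $\gamma = 1$ after composing with a power of $\tau$: since $m \mid d$, $\gamma^n = \lambda^d = 1$, so $\gamma$ is an $n$-th root of unity. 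Thus $\sigma_\lambda \in H$, and $\sigma_\lambda$ preserves $\YY$ because $f(\lambda a) = f(a)$ keeps $f(a)$ an $n$-th power.

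The two cyclic groups commute, since $\sigma_\lambda$ fixes $y$ and $\tau$ fixes $x$, and they intersect trivially. Hence $H = \langle \tau \rangle \times \langle \sigma_\lambda \rangle \cong \Z_n \times \Z_m$, of order $nm$ prime to $p$ because $n \mid q-1$ and $m \mid q-1$. Finally, $H \leq \PAut(\CC_s)$: each $\sigma \in H$ permutes $\YY$ and fixes $sD_\infty$ as a divisor, so $\sigma^*$ maps $L(sD_\infty)$ onto itself. Evaluation then intertwines $\sigma^*$ with the coordinate permutation of $\YY$ induced by $\sigma$.
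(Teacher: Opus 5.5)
Your plan follows the paper's own route: descend $\sigma$ to $\bar\sigma\in\mathrm{AGL}_1(\F_q)$, get rid of $\beta$, then compare coefficients in $f(\alpha x+\beta)=\gamma^n f(x)$. The obstacle you flag is genuine, and it is exactly where the paper's proof breaks. The paper disposes of $\beta$ with the sentence ``the translations form a $p$-group, so $p\nmid|H|$ forces $\beta=0$''. That excludes only $\alpha=1$, $\beta\neq0$, which is all you derive; for $\alpha\neq1$ the map $x\mapsto\alpha x+\beta$ has order dividing $q-1$ whatever $\beta$ is.

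The proposition is in fact false as written. Over $\F_7$ with $n=2$, take $f(x)=(x-1)^4+1=x^4+3x^3+6x^2+3x+2$.
\begin{itemize}
\item It is separable and has no root in $\F_7$, because $6$ is not a fourth power there.
\item All its coefficients are nonzero, so $m(f)=1$.
\item Yet $(x,y)\mapsto(2-x,y)$ is defined over $\F_7$ and preserves $\pi$, $D_\infty$ and $\YY$.
\item Any $\bar\sigma$ permutes the four roots, so it fixes their mean $1$. It therefore has the form $x\mapsto\alpha(x-1)+1$ with $\alpha^4=\gamma^2=1$, so $\alpha=\pm1$.
\end{itemize}
Hence $H\cong\Z_2\times\Z_2$ has order $4$, prime to $7$, while the proposition predicts $H=\langle\tau\rangle$. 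So a normalisation is needed. Yours is the right repair: place the common fixed point $b$ of $\bar H$ at $0$, equivalently replace $m(f)$ by $m(f(x+b))$.

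Three corrections to your own write-up.

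First, the $x^{d-1}$ coefficients give $d\,a_d\,\beta=a_{d-1}(\alpha-1)$, not $a_{d-1}(1-\alpha)$. With your sign the fixed point would be $+a_{d-1}/(d a_d)$. The value $b=-a_{d-1}/(d a_d)$ that you actually state, the mean of the roots, is the correct one.

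Second, you do not need the coefficient comparison at all, and dropping it removes the vague appeal to ``the next nonvanishing coefficient'' when $p\mid d$. The group $\bar H$ meets the translation subgroup, the normal Sylow $p$-subgroup of $\mathrm{AGL}_1(\F_q)$, trivially. So $\bar\sigma\mapsto\alpha$ embeds $\bar H$ in $\F_q^*$ and $\bar H$ is cyclic. A generator $x\mapsto\alpha_0x+\beta_0$ with $\alpha_0\neq1$ has fixed point $b=\beta_0/(1-\alpha_0)\in\F_q$, which is common to all of $\bar H$. This holds for every $p$.

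The same remark shows that no normalisation is needed when $m(f)\geq2$. The converse half of the argument, which is correct, puts $\sigma_\lambda$ in $H$. Then $\bar H$ contains a nontrivial scaling about $0$, and that forces $b=0$. The statement can therefore fail only when $m(f)=1$, as in the example above.

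Third, when you pass from $\alpha^{d-i}=1$ on the support to $\alpha^{m(f)}=1$, say explicitly that $0$ lies in the support because $a_0=f(0)\neq0$ by \textup{(H1)}. That is what yields $\alpha^d=1$, and hence $\gamma^n=1$, as in the paper.
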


\begin{proof}
Let $\sigma \in H$. Since $\sigma$ preserves $\pi$, it descends to an automorphism $\overline{\sigma}$ of $\PP^1$ preserving the branch locus, and since $\sigma$ preserves $D_\infty$, the map $\overline{\sigma}$ fixes $x = \infty$. Hence $\overline{\sigma} \in \mathrm{AGL}_1$ and, being defined over $\F_q$, it has the form $x \mapsto \alpha x + \beta$ with $\alpha \in \F_q^*$, $\beta \in \F_q$. The translations form a $p$-group, so $p \nmid |H|$ forces $\beta = 0$. Thus $\sigma (x,y) = ( \alpha x, \varepsilon y )$ for some $\varepsilon \in \overline{\F}_q^{\, *}$, and substituting into \cref{eq-superell} gives $a_i \alpha^i = \varepsilon^n a_i$ for every $i$. Taking $i = d$ gives $\varepsilon^n = \alpha^d$. Taking $i = 0$, legitimate because $a_0 = f(0) \neq 0$ by (H1), gives $\varepsilon^n = 1$. Hence $\alpha^i = 1$ for every $i$ with $a_i \neq 0$, and $\alpha^{q-1} = 1$, so $\alpha \in \mu_m$. Moreover $\varepsilon \in \mu_n \subseteq \F_q^*$ by (H4), so $\sigma = \sigma_\alpha \tau^k$ for some $k$.

Conversely $\sigma_\lambda$ lies in $H$. It is defined over $\F_q$, it preserves $\pi$ and $D_\infty$, and $f ( \lambda x ) = f(x)$ because $m$ divides every $i$ in the support of $f$; hence it maps the affine $\F_q$-rational point $(a,b)$ to $( \lambda a, b )$ and stabilises $\YY$. Also $\tau \in H$ by (H4). The two commute, and $\langle \tau \rangle \cap \langle \sigma_\lambda \rangle = 1$ because $\sigma_\lambda^j$ acts trivially on $y$ while $\tau^k$ acts trivially on $x$, so the product is direct. Finally $n \mid q-1$ and $m \mid q-1$ give $p \nmid nm$. Each $\sigma \in H$ fixes $D_\infty$ as a divisor, so $L ( s D_\infty )$ is stable under $h \mapsto h \circ \sigma^{-1}$, which under $\ev_s$ is the permutation of coordinates induced by $\sigma$ on $\YY$. Hence $H \leq \PAut ( \CC_s )$.
\end{proof}

%Automorphism groups of algebraic geometry codes on $C_{a,b}$ curves, which by \cref{sec-prelim} are the superelliptic curves with $c = 1$, are computed in \cite{sh-wa}. The group $H$ above is the subgroup of the geometric automorphism group that survives over $\F_q$ and fixes the point at infinity.

%\subsection{Strata with a regular action}\label{ssec-transitive}

\begin{prop}\label{prop-transitive-criterion}
Assume \textup{(H1)}--\textup{(H4)}, let $m \mid m(f)$, let $\lambda \in \F_q^*$ have order $m$, write $f (x) = g ( x^m )$ with $\deg g = d/m$, and assume that \(g (0) \; \notin \; ( \F_q^* )^n\). 
%
%\begin{equation}\label{prop-transitive-criterion}
%
%\end{equation}
%
Then $\sigma_\lambda : (x,y) \mapsto ( \lambda x, y )$ is an automorphism of $\XX$ fixing $D_\infty$, the group $H_m = \langle \tau, \sigma_\lambda \rangle \cong \Z_n \times \Z_m$ acts freely on $\YY$, and the following are equivalent:
\begin{enumerate}
\item $H_m$ acts transitively, hence regularly, on $\YY$;
\item $N = nm$;
\item exactly one of the $(q-1)/m$ values $g ( w^m )$, as $w$ runs through a set of representatives of $\F_q^* / \langle \lambda \rangle$, lies in $( \F_q^* )^n$.
\end{enumerate}
\end{prop}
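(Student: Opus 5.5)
The plan is to reduce everything to the fibre structure of $\pi$ over $\A^1(\F_q)$, as described in \cref{lem-pullback}, and then count orbits. First I would check that $\sigma_\lambda$ is an automorphism. Since $m \mid m(f)$, every exponent in the support of $f$ is a multiple of $m$, so $f(\lambda x) = g(\lambda^m x^m) = g(x^m) = f(x)$. Hence $(x,y) \mapsto (\lambda x, y)$ preserves \cref{eq-superell}, and it is defined over $\F_q$ because $\lambda \in \F_q^*$. On the weighted model it is $[z:x:y] \mapsto [z:\lambda x:y]$, which preserves \cref{eq-weighted-model} and the locus $z = 0$, so it fixes $D_\infty$ as a divisor. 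It also preserves $\pi$ and maps the affine rational point $(a,b)$ to the affine rational point $(\lambda a, b)$, so it stabilises $\YY$. As in the proof of \cref{prop-stabilizer}, $\tau$ and $\sigma_\lambda$ commute and generate a direct product $H_m \cong \Z_n \times \Z_m$: powers of $\sigma_\lambda$ fix $y$, powers of $\tau$ fix $x$, and $\tau \in \Aut(\XX)$ over $\F_q$ by (H4).

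Next I would prove freeness; this is the only place the hypothesis $g(0) \notin (\F_q^*)^n$ enters. Since $f(0) = g(0)$, the fibre over $x = 0$ contains no rational point by the dichotomy in the proof of \cref{lem-pullback}, that is $0 \in \Delta$. Hence every $(a,b) \in \YY$ has $a \neq 0$. Moreover $b \neq 0$, because $b^n = f(a) \neq 0$ by (H1). Now suppose $\tau^k \sigma_\lambda^j$ fixes $(a,b)$, i.e. $(\lambda^j a, \zeta_n^k b) = (a,b)$. Then $\lambda^j = 1$ and $\zeta_n^k = 1$, so the element is trivial. Thus every $H_m$-orbit in $\YY$ has exactly $nm$ elements.

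For the equivalences, write $N = nm \cdot r$, where $r$ is the number of orbits. Transitivity means $r = 1$, which is $N = nm$, and a free transitive action is regular; this gives (1)$\Leftrightarrow$(2). For (3) I would count $N$ fibrewise. By \cref{lem-pullback}, a fibre over $a \in \F_q$ contributes $n$ rational points if $f(a) \in (\F_q^*)^n$ and none otherwise, and the fibre over $0$ contributes nothing. For $a \in \F_q^*$ we have $f(a) = g(a^m)$, and $a^m$ is constant on the coset $a\langle \lambda \rangle$, which has $m$ elements. Hence
\[
N \; = \; nm \cdot \# \{ \, w \in \F_q^*/\langle \lambda \rangle \; : \; g(w^m) \in (\F_q^*)^n \, \} ,
\]
so $r$ equals the number of good cosets, and $N = nm$ exactly when one of the $(q-1)/m$ values is an $n$-th power. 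This gives (2)$\Leftrightarrow$(3).

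None of these steps is deep. The point needing the most care is the freeness argument, specifically that the hypothesis on $g(0)$ excludes the fibre over $0$. Without it, points with $a = 0$ would be fixed by $\sigma_\lambda$, orbits would have size $n$ rather than $nm$, and the orbit count in the last step would break.
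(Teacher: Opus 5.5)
Your proof is correct and follows the paper's argument essentially step for step. The automorphism comes from $f(\lambda x)=f(x)$. Freeness, and hence (1)$\Leftrightarrow$(2), comes from $0\in\Delta$ (forced by $g(0)\notin(\F_q^*)^n$) together with $f$ having no rational root, and (2)$\Leftrightarrow$(3) comes from $f$ being constant on the cosets of $\langle\lambda\rangle$ in $\F_q^*$.

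One small correction to your closing commentary: the freeness step is not the only place the hypothesis on $g(0)$ enters. It is used again in your final count, where excluding the fibre over $x=0$ is what makes $N$ exactly $nm$ times the number of good cosets.
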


\begin{proof}
Since $\lambda^m = 1$ we have $f ( \lambda x ) = g ( \lambda^m x^m ) = f(x)$, so $\sigma_\lambda$ is an automorphism fixing $x = \infty$ and hence $D_\infty$, and it commutes with $\tau$. Let $S = \F_q \setminus \Delta = \{ a \in \F_q : f(a) \in ( \F_q^* )^n \}$, so that $N = n | S |$ by \cref{lem-pullback}. By the above we have $0 \notin S$, so every point of $\YY$ has $x \neq 0$. If $\tau^i \sigma_\lambda^{\, j}$ fixes $(a,b) \in \YY$ then $\lambda^j a = a$ with $a \neq 0$ forces $m \mid j$, and then $\zeta_n^{\, i} b = b$ with $b \neq 0$ forces $n \mid i$. Hence the action is free, $|H_m| = nm$ divides $N$, and the action is transitive if and only if $N = nm$. This is the equivalence of (1) and (2). Finally $f$ is constant on each coset of $\langle \lambda \rangle$ in $\F_q^*$, because $x^m$ is, so $S$ is a union of such cosets. Since $0 \notin S$, condition (2) says precisely that $S$ is a single coset, which is (3).
\end{proof}

The hypothesis $m \mid m(f)$ is geometric: it says that the branch locus of $\pi$ is invariant under $x \mapsto \lambda x$, that is, that $\overline{\Aut} ( \XX )$ contains a cyclic group of order $m$ fixing $0$ and $\infty$, which places $\XX$ among the families with prescribed automorphism group of \cite{sa-sh}. The conditions of \cref{prop-transitive-criterion} are arithmetic and depend only on $q$ and $g$. The extreme case $m = d$ constrains the curve itself and is the case in which the maximal hull dimension admits a closed form.

\begin{rem}\label{rem-binomial}
Suppose $m = d$ in \cref{prop-transitive-criterion}. Then $\deg g = 1$, so $f$ is a binomial, $\XX : y^n = a x^d + b$ with $a, b \in \F_q^*$, and $d \mid q-1$ because $m \mid q-1$. Over $\overline{\F}_q$ the substitution $x = u x'$, $y = v y'$ with $a u^d = b$ and $v^n = b$ carries the equation to $y'^{\, n} = x'^{\, d} + 1$. Hence the locus $m = d$ is a single point of the moduli space for each type $(n,d)$, and its $\F_q$-members are the forms of $y^n = x^d + 1$. For $n = d$ one has $c = n$ and $\bw = (1,1,1)$, so the weighted model is a smooth plane curve of degree $n$ and $\XX$ is a form of the Fermat curve of degree $n$.
\end{rem}

\begin{prop}\label{prop-window-empty}
Under the hypotheses of \cref{prop-transitive-criterion}, if $H_m$ acts transitively on $\YY$ then $W ( \XX ) = \emptyset$. In particular $\XX$ is not totally split.
\end{prop}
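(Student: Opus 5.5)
The plan is to reduce the statement to the numerical criterion \cref{eq-window-nonempty} of \cref{prop-window}(2). By \cref{prop-transitive-criterion}, transitivity means $N = nm$. By \cref{lem-pullback}, $N = n(q-\delta)$, so $\delta = q - m$ and $n\delta = n(q-m)$. The last assertion is then immediate: $m \mid q-1$ gives $m \le q-1$, hence $\delta \ge 1$, and by \cref{def-split} the curve is not totally split. (Alternatively, $0 \notin S$ already gives $0 \in \Delta$.) It therefore remains to show that
\[
c \lfloor M/2 \rfloor - n(q-m) \;\le\; 2 g_\XX - 2 = (n-1)(d-1) - 1 - c ,
\]
using \cref{eq-genus} and $M = (q-1)n/c - 1$.

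I would split into two cases according to the size of $m$, which is a divisor of $q-1$.

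\emph{Case $m < q-1$.} Then $m \le (q-1)/2$. Bounding $c\lfloor M/2\rfloor \le cM/2 = \bigl((q-1)n - c\bigr)/2$, the left side is at most
\[
\tfrac12\bigl(n(2m-q-1) - c\bigr) \le \tfrac12(-2n - c) < 0 .
\]
The right side is $\ge 0$ because $g_\XX \ge 1$ by \cref{lem-standing}, so the inequality holds.

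\emph{Case $m = q-1$.} Then $\delta = 1$. Since $m \mid m(f) \mid d$, we also have $d \ge q-1$. The crude bound via $cM/2$ now leaves a defect of order $(n-2)(q-1) - c$. This is fine for $n \ge 3$ once one checks that $c \le n$ absorbs the remaining terms. For $n = 2$, however, the crude bound is not enough and the floor must be used. Here $q$ is odd, so $c = \gcd(2,d) = 2$ because $q-1 \mid d$. Hence $M = q-2$ is odd and $c\lfloor M/2 \rfloor = q-3$. The inequality becomes $q - 5 \le d - 4$, which holds since $d \ge q-1$. Note that it is an equality at $d = q-1$; this is exactly why non-strictness suffices, as \cref{eq-window-nonempty} requires a strict inequality.

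I expect this second case to be the main obstacle. It is a bookkeeping step: for each $n$ I must track the parity of $M$ and the value of $c$ (which divides both $n$ and $d$, with $q-1 \mid d$). For general $n \ge 3$ I would write
\[
2(2g_\XX - 2) - 2\bigl(c\lfloor M/2\rfloor - n\bigr) \;\ge\; 2(n-1)(q-2) - 2 - 2c - n(q-1) + c + 2n ,
\]
and verify that this is nonnegative using $q \ge n+1$ (from $n \mid q-1$) and $c \le n$. If that fails in some small corner, I would treat it by the exact floor computation, as in the case $n = 2$.
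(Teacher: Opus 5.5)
Your proof is correct and is essentially the paper's argument. You test the meet-side form $c\lfloor M/2\rfloor - n\delta > 2g_\XX - 2$ of the window criterion, while the paper tests the equivalent join-side form $c\lceil M/2\rceil + (n-1)d < N = nm$, and you split on $m$ before $n$; the ingredients are otherwise the same, namely the crude bound $c\lfloor M/2\rfloor \le cM/2$, the defect $(n-2)(q-1)-c$, and the exact floor computation at $n=2$, $m=q-1$, $d \ge q-1$. Your hedge for $n \ge 3$ is unnecessary: the displayed lower bound simplifies exactly to $(n-2)(q-1)-c \ge (q-1)-n \ge 0$, so no corner cases remain.
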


\begin{proof}
By \cref{eq-degrees,eq-tofs} the condition $t (s) < 0$ reads $c \max \{ s, M-s \} + (n-1) d < N$, and $\max \{ s, M-s \}$ is minimised at $s = \lceil M/2 \rceil$. Hence the window is nonempty if and only if
\[
c \left\lceil M/2 \right\rceil + (n-1) d \; < \; N = nm .
\]
Assume this inequality. Since $c \lceil M/2 \rceil \geq c M / 2 = \left( (q-1) n - c \right) / 2$ and $d = m \deg g \geq m$, we get $\left( (q-1) n - c \right) / 2 + (n-1) m < nm$, that is $(q-1) n - c < 2m \leq 2 (q-1)$, whence
\[
(q-1) (n-2) \; < \; c \; \leq \; n .
\]
If $n \geq 3$ then $n \mid q-1$ gives $n \leq q-1$, so $(q-1)(n-2) \geq n (n-2) \geq n$, a contradiction. Hence $n = 2$, and $q$ is odd by (H4). Then $2 (q-1) - c < 2m$ with $c \leq 2$ gives $m > q - 2$, so $m = q-1$ and $d \geq m = q-1$. If $c = 2$ then $M = q-2$ is odd and $c \lceil M/2 \rceil = q-1$. If $c = 1$ then $M = 2(q-1) - 1$ and $c \lceil M/2 \rceil = q-1$. In both cases the displayed inequality reads $(q-1) + d < 2 (q-1)$, that is $d < q-1$, contradicting $d \geq q-1$. Hence $W ( \XX ) = \emptyset$. Finally $N = nm \leq n (q-1) < nq$, so $E \neq 0$.
\end{proof}

\begin{rem}\label{rem-hermitian}
By \cref{rem-binomial} the Fermat and Hermitian curves lie in the locus $m(f) = d$, but they satisfy neither \textup{(H1)} nor \cref{prop-transitive-criterion}. Let $q = q_0^2$, let $n = d = q_0 + 1$, and write the Hermitian curve as $y^n = f(x)$ with $f (x) = - \left( x^{q_0+1} + 1 \right)$, so that $m (f) = \gcd \left( q_0^2 - 1, \, q_0 + 1 \right) = d$ and $g_\XX = q_0 ( q_0 - 1 ) / 2$ by \cref{eq-genus}. The map $x \mapsto x^{q_0+1}$ is the norm $\F_{q_0^2}^* \to \F_{q_0}^*$ and is surjective. Hence the equation $x^{q_0+1} = -1$ has $d$ solutions in $\F_q$, so every root of $f$ is $\F_q$-rational and \textup{(H1)} fails. For the same reason $( \F_q^* )^n = \F_{q_0}^*$ contains $f (0) = -1$, so \cref{prop-transitive-criterion} fails as well. \cref{prop-transitive-criterion} selects the forms of $y^n = x^d + 1$ on which $f$ takes $n$-th power values on a single coset of $\langle \lambda \rangle$, hence with the fewest rational points the shape permits, whereas a maximal curve takes such values as often as possible. By \cref{prop-window-empty}, a transitive stratum is never totally split.
\end{rem}

%\subsection{The hull as a defining set}\label{ssec-defining}

\begin{lem}\label{lem-defining-set}
Assume the criterion of \cref{prop-transitive-criterion}. Then $\F_q^N$ is the regular representation of $H_m$, the code $\CC_s$ is an ideal of $\F_q [H_m]$ with defining set
\[
T_s \; = \; \left\{ \, ( i \bmod m, \; j \bmod n ) \; : \; x^i y^j \in B_s \, \right\} \; \subseteq \; \Z_m \times \Z_n ,
\]
and $\dim \CC_s = | T_s |$.
\end{lem}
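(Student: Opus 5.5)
The plan is to identify $\YY$ with $H_m$ and then diagonalise $L(sD_\infty)$ under the characters of $H_m$.

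First, fix a base point $P_0 = (a_0, b_0) \in \YY$. By \cref{prop-transitive-criterion} the action of $H_m$ on $\YY$ is regular, so $h \mapsto h(P_0)$ is a bijection $H_m \to \YY$. This identifies $\F_q^N$ with $\F_q[H_m]$ as an $H_m$-module; it is the regular representation. By \cref{prop-stabilizer} (more precisely, by the argument at its end, which applies to $H_m$ because $\sigma_\lambda$ and $\tau$ fix $D_\infty$), $H_m$ acts on $\CC_s$ by coordinate permutations. Hence $\CC_s$ is an $H_m$-stable subspace of $\F_q[H_m]$, that is, an ideal. Since $|H_m| = nm$ divides $q-1$, the algebra $\F_q[H_m]$ is split semisimple. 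Its primitive idempotents are indexed by the characters $\chi_{(i,j)}$ of $\Z_m \times \Z_n$, with $\chi_{(i,j)}(\sigma_\lambda^{a}\tau^{b}) = \lambda^{ia}\zeta_n^{jb}$ (up to the chosen sign convention). Each ideal is therefore the direct sum of the one-dimensional isotypic components it contains, and the set of characters occurring is what I take as the defining set.

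The key step is to compute the character of each basis vector $\ev(x^i y^j)$, $x^i y^j \in B_s$, from \cref{lem-bridge}. Pulling back by $\sigma_\lambda$ multiplies $x^i y^j$ by $\lambda^i$, and pulling back by $\tau$ multiplies it by $\zeta_n^j$. So $\ev(x^iy^j)$ lies in the $\chi_{(i \bmod m,\, j\bmod n)}$-isotypic line. That line is spanned by the function $g \mapsto \chi(g)^{\pm1}$ on $H_m \cong \YY$, and $\ev(x^i y^j)$ restricts to $a_0^i b_0^j$ times it. Since $a_0 b_0 \neq 0$, as shown in the proof of \cref{prop-transitive-criterion}, this vector is nonzero. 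Hence $\CC_s = \ev L(sD_\infty)$ is the sum of the lines indexed by the pairs $(i \bmod m, j \bmod n)$ with $x^iy^j \in B_s$, which is the set $T_s$. It follows that $\dim \CC_s = |T_s|$, because distinct characters give independent lines and a repeated character contributes the same line.

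The main obstacle is the sign convention relating the eigencharacter of a function to the isotypic component of its evaluation vector. Pullback by $\sigma$ versus the permutation action $h \mapsto h\circ\sigma^{-1}$ introduces an inverse, and $T_s$ may come out as $-T_s$ depending on how the defining set is normalised. I would handle this by fixing the convention so that the defining set is literally the set of exponent residues. I would check it against the identity $\ev(x^iy^j)(\sigma_\lambda^a\tau^b P_0) = \lambda^{ia}\zeta_n^{jb}a_0^ib_0^j$, which settles the convention directly. A second point to verify is that $H_m \leq \PAut(\CC_s)$ holds for $H_m$ itself and not only for the group $H$ of \cref{prop-stabilizer}. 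This follows because $L(sD_\infty)$ is stable under both generators of $H_m$.
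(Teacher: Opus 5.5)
Your proof is correct and follows essentially the paper's route (regular action of $H_m$ on $\YY$, multiplicity-one isotypic lines, and each $\ev(x^iy^j)$ a nonzero vector in the line indexed by $(i \bmod m, j \bmod n)$). Your explicit formula $\ev(x^iy^j)(\sigma_\lambda^a\tau^bP_0)=\lambda^{ia}\zeta_n^{jb}a_0^ib_0^j$ does the work that the paper assigns to the constancy of $x^m$ on $\YY$, namely showing that monomials with the same character give proportional vectors.

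One slip: $|H_m| = nm$ need not divide $q-1$ (take $q=7$, $n=2$, $m=6$ from \cref{tab-transitive}). Split semisimplicity of $\F_q[H_m]$ comes instead from $p \nmid nm$ and $\operatorname{lcm}(n,m) \mid q-1$. In any case, your explicit formula together with the linear independence of distinct characters already gives $\dim \CC_s = |T_s|$ without it.
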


\begin{proof}
The action of $H_m$ on $\YY$ is regular by \cref{prop-transitive-criterion}, so $\F_q^N$ is the regular representation and each of the $N = nm$ characters occurs in it with multiplicity one. The isotypic components are therefore lines. The monomial $x^i y^j$ has character $( i \bmod m, j \bmod n )$, so $\ev_s ( x^i y^j )$ lies in the corresponding line. Two monomials of $B_s$ with the same character have the same exponent of $y$, since $0 \leq j \leq n-1$, and exponents of $x$ differing by a multiple of $m$. As $\YY$ lies over a single coset of $\langle \lambda \rangle$, the function $x^m$ is constant on $\YY$, so their images are proportional. No image is zero, because $x$ and $y$ vanish nowhere on $\YY$ by \cref{prop-transitive-criterion} and (H1). Hence $\CC_s$ is the sum of the lines indexed by $T_s$.
\end{proof}

\begin{thm}\label{thm-transitive-hull}
Assume the criterion of \cref{prop-transitive-criterion} and write $-T_s = \{ ( -a, -b ) : (a,b) \in T_s \}$. Then
\begin{equation}\label{eq-transitive-hull}
\Hull ( \CC_s ) \; = \; \bigoplus_{\chi \, \in \, T_s \setminus ( - T_s )} \left( \F_q^N \right)_\chi , \qquad \dim \Hull ( \CC_s ) \; = \; | T_s | - | T_s \cap ( - T_s ) | .
\end{equation}
In particular $\CC_s$ is \textup{LCD} if and only if $T_s = - T_s$, and self-orthogonal if and only if $T_s \cap ( - T_s ) = \emptyset$.
\end{thm}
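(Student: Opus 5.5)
The plan is to decompose $\F_q^N$ into $H_m$-isotypic lines, show that the standard bilinear form pairs the line of $\chi$ only with the line of $\chi^{-1}$, and then read off $\CC_s^\perp$ and the hull line by line using \cref{lem-defining-set}.

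First I fix the setup. Because $m \mid q-1$ and $n \mid q-1$, every character of $H_m \cong \Z_m \times \Z_n$ takes values in $\F_q^*$. Because $p \nmid nm$, the group algebra $\F_q[H_m]$ is semisimple. I label the character sending $\sigma_\lambda^{k}\tau^{l}$ to $\lambda^{-ak}\zeta_n^{-bl}$ by $(a,b) \in \Z_m \times \Z_n$, with the sign fixed by how $x^ay^b$ transforms under $h \mapsto h\circ\sigma^{-1}$, as in \cref{lem-defining-set}. Whatever normalisation is used, inversion of characters corresponds to $(a,b) \mapsto (-a,-b)$, and that is the only property the argument needs. By \cref{lem-defining-set}, $\F_q^N = \bigoplus_\chi V_\chi$ with each $V_\chi := (\F_q^N)_\chi$ a line, and $\CC_s = \bigoplus_{\chi \in T_s} V_\chi$.

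The key step is an orthogonality relation between isotypic components. The standard form $\langle u, v \rangle = \sum_P u_P v_P$ is invariant under any coordinate permutation, so in particular under $H_m$. Take $u \in V_\chi$, $v \in V_\psi$ and $h \in H_m$. Then
\[
\langle u, v \rangle = \langle hu, hv \rangle = \chi(h)\psi(h)\langle u, v \rangle,
\]
so $\langle V_\chi, V_\psi \rangle = 0$ unless $\psi = \chi^{-1}$. The standard form is nondegenerate on $\F_q^N$ and the lines $V_\psi$ span $\F_q^N$. Hence a nonzero $u \in V_\chi$ must pair nontrivially with $V_{\chi^{-1}}$, so the pairing $V_\chi \times V_{\chi^{-1}} \to \F_q$ is perfect. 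This covers $\chi = \chi^{-1}$ too, where the form restricted to $V_\chi$ is nonzero.

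From the pairing I then compute the dual. A vector $v = \sum_\psi v_\psi$ is orthogonal to $\CC_s$ exactly when $v_{\chi^{-1}} = 0$ for every $\chi \in T_s$. Therefore
\[
\CC_s^\perp = \bigoplus_{\psi \notin -T_s} V_\psi .
\]
Both $\CC_s$ and $\CC_s^\perp$ are sums of the same family of independent lines, so their intersection is the sum over the intersection of the index sets:
\[
\Hull(\CC_s) = \bigoplus_{\chi \in T_s \setminus (-T_s)} V_\chi .
\]
This gives \cref{eq-transitive-hull}, and the dimension is $|T_s| - |T_s \cap (-T_s)|$ because each $V_\chi$ is a line. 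The two special cases follow immediately. The code is LCD iff $T_s \subseteq -T_s$, which is equivalent to $T_s = -T_s$ since the two sets have the same size. The code is self-orthogonal iff the hull has dimension $|T_s|$, that is, iff $T_s \cap (-T_s) = \emptyset$.

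The only delicate point is the bookkeeping in the first step. One must check that the label attached to $x^iy^j$ in \cref{lem-defining-set} is a genuine character of the permutation action on coordinates. Since $\ev_s$ is equivariant, as in \cref{prop-stabilizer}, this holds, and any sign convention is harmless because the statement is invariant under the global change $\chi \mapsto \chi^{-1}$.
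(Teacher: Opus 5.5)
Your proof is correct and follows the paper's route: decompose $\F_q^N$ into the isotypic lines of \cref{lem-defining-set}, show that the standard form pairs the $\chi$-line only with the $\chi^{-1}$-line, and read off $\CC_s^\perp$ and the hull index by index. The one difference is in how the pairing is established: the paper evaluates the character sum $\sum_{h} \chi(h)\psi(h)$, which is $0$ unless $\psi = \chi^{-1}$ and equals $N \neq 0$ in $\F_q$ otherwise, whereas you derive orthogonality from permutation-invariance of the form and perfectness of the $\chi$--$\chi^{-1}$ pairing from global nondegeneracy, which is equally valid.
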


\begin{proof}
For characters $\chi, \psi$ of $H_m$ one has $\sum_{h \in H_m} \chi (h) \psi (h) = 0$ unless $\psi = \chi^{-1}$, in which case the sum equals $| H_m | = N$, which is nonzero in $\F_q$ because $nm \mid ( q-1 )^2$. Hence the standard bilinear form pairs the $\chi$-component with the $\chi^{-1}$-component and is zero on all other pairs, so $\CC_s^\perp$ is the sum of the lines indexed by the complement of $- T_s$. Intersecting with $\CC_s$ gives \cref{eq-transitive-hull}.
\end{proof}

\cref{eq-transitive-hull} is the exact analogue, for the abelian group $\Z_n \times \Z_m$, of the classical criterion for cyclic codes. If $C$ is the cyclic code of length $N$ with defining set $T \subseteq \Z_N$, then $\Hull (C) = 0$ if and only if $- T \subseteq T$. Skersys \cite{sk} proved that the average dimension of the hull of cyclic codes of length $N$ vanishes precisely when $N$ divides an integer of the form $q^i + 1$, and grows linearly in $N$ otherwise. That exceptional condition is a self-reciprocity condition: every $q$-cyclotomic coset modulo $N$ is stable under negation, equivalently every irreducible factor of $x^N - 1$ over $\F_q$ is self-reciprocal. It is the criterion studied in \cite{sh-si}. \cref{thm-transitive-hull} places the codes $\CC_s$ on the transitive strata in that setting: the hull vanishes precisely on the symmetric defining sets and is large precisely when $T_s$ is far from symmetric.

\cref{thm-transitive-hull} determines the hull for every $s$ by a finite computation with no geometry, and by \cref{prop-window-empty} this is exactly the range on which \cref{thm-hull} is silent. Comparing the two evaluates the excess.

\begin{cor}\label{cor-epsilon-transitive}
Assume the criterion of \cref{prop-transitive-criterion}. Then $\delta = q - m$ and, for every $s \geq 1$ with $c \, \nu (s) < N$,
\[
\varepsilon ( G_s, A_s ) \; = \; \left| T_s \right| - \left| T_s \cap ( - T_s ) \right| - \Phi \left( \nu (s) \right) ,
\]
where $\Phi$ and $\nu$ are as in \cref{eq-Phi,def-nu}.
\end{cor}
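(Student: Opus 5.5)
The proof has two parts: the identity $\delta = q - m$, and then the formula for the excess, obtained by comparing \cref{eq-quasi} with \cref{thm-transitive-hull}.

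\textbf{The identity $\delta = q - m$.} By \cref{lem-pullback} we have $N = n(q-\delta)$. Under the criterion of \cref{prop-transitive-criterion} the equivalence (1)$\Leftrightarrow$(2) gives $N = nm$. Comparing the two expressions and cancelling $n$ gives $q - \delta = m$, that is $\delta = q - m$.

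\textbf{The excess formula.} Fix $s \geq 1$ with $c\,\nu(s) < N$. This is exactly the hypothesis of \cref{thm-quasi}, so \cref{eq-quasi} gives
\[
\dim \Hull(\CC_s) = \Phi(\nu(s)) + \varepsilon(G_s,A_s).
\]
On the other hand, the criterion places us under \cref{lem-defining-set} and \cref{thm-transitive-hull}. The latter computes the same hull as
\[
\dim \Hull(\CC_s) = |T_s| - |T_s \cap (-T_s)|.
\]
Equating the two expressions for $\dim \Hull(\CC_s)$ and solving for $\varepsilon(G_s,A_s)$ gives the stated formula.

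\textbf{Where care is needed.} The only point to check is that both theorems speak about the same code $\CC_s$ with the same evaluation set $\YY$ and divisor $D$.

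\cref{thm-quasi} uses \textup{(H1)}--\textup{(H4)}, with $\YY$ the full set of affine rational points by \textup{(H2)}. \cref{prop-transitive-criterion} is stated under the same hypotheses and with the same $\YY$, so both theorems refer to the same $\CC_s$.

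The dual used in \cref{thm-transitive-hull} is the standard bilinear dual $\CC_s^\perp$. By \cref{prop-dual} this is $C_L(D,A_s)$, which is the dual entering \cref{thm-quasi}. So no rescaling of coordinates intervenes, and the two hull dimensions are literally equal.

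I do not expect a genuine obstacle beyond this bookkeeping. By \cref{prop-window-empty} the window is empty here, so \cref{thm-hull} cannot be used, but it is not needed: the general identity \cref{eq-quasi} holds for every $s$ with $c\,\nu(s)<N$, regardless of the zone.
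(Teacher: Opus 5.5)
Your proposal is correct and follows the paper's own proof: $\delta = q - m$ comes from comparing $N = n(q-\delta)$ of \cref{lem-pullback} with $N = nm$, and the formula comes from equating \cref{eq-quasi} with \cref{eq-transitive-hull}. Your bookkeeping is accurate: the same $\YY$ is used throughout, and \cref{prop-dual} identifies the standard dual with $C_L(D,A_s)$. It simply makes explicit what the paper leaves implicit.
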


\begin{proof}
By \cref{lem-pullback}, $N = n ( q - \delta )$, and the criterion gives $N = nm$; hence $\delta = q - m$. The formula is \cref{eq-quasi} combined with \cref{eq-transitive-hull}.
\end{proof}

%\subsection{A bound and its sharpness}\label{ssec-torsion}

The $2$-torsion of $H_m$ constrains the symmetry of $T_s$. This bounds the hull, and hence the excess by \cref{cor-epsilon-transitive}.

\begin{prop}\label{prop-torsion-bound}
Assume the criterion of \cref{prop-transitive-criterion} and let
\[
t_2 \; = \; \left| \left( \Z_m \times \Z_n \right) [2] \right| \; = \; \gcd (2, m) \cdot \gcd (2, n)
\]
be the number of characters of $H_m$ of order at most two. Then $\dim \Hull ( \CC_s ) \leq ( nm - t_2 ) / 2$ for every $s \geq 1$.
\end{prop}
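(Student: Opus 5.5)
By \cref{thm-transitive-hull}, $\dim \Hull ( \CC_s ) = | T_s \setminus ( - T_s ) |$, so the proof is a counting argument on subsets of the abelian group $\Gamma = \Z_m \times \Z_n$, which has order $nm$. No further geometry is needed; everything happens in the character group of $H_m$.

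Let $U = T_s \setminus ( - T_s )$. The key observation is that $U \cap ( - U ) = \emptyset$. Indeed, suppose $\chi \in U$ and also $- \chi \in U \subseteq T_s$. Then $\chi \in - T_s$, contradicting $\chi \notin - T_s$.

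This has two consequences.

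First, $U$ contains no element of $\Gamma [2]$. An element $\chi$ with $\chi = - \chi$ lying in $T_s$ automatically lies in $- T_s$, so it is excluded from $U$.

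Second, $U$ meets each pair $\{ \chi, - \chi \}$ with $\chi \neq - \chi$ in at most one element. The set $\Gamma \setminus \Gamma [2]$ splits into exactly $( nm - t_2 ) / 2$ such pairs.

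Combining the two, $| U | \leq ( nm - t_2 ) / 2$. It remains to note that $t_2 = | \Gamma [2] | = \gcd (2,m) \gcd (2,n)$, since $\Z_k [2]$ has order $\gcd (2,k)$ and the $2$-torsion of a product is the product of the $2$-torsions.

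There is no real obstacle. The only point to check is that the identification of $\Hull$ with the sum of the lines indexed by $T_s \setminus ( - T_s )$ in \cref{thm-transitive-hull} is valid for every $s \geq 1$, including $s$ so large that $T_s = \Gamma$. That case is consistent with the bound: there $\CC_s = \F_q^N$ and the hull is $0$.
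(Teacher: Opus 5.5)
Your proof is correct and is essentially the paper's argument. You reduce via \cref{thm-transitive-hull} to $U = T_s \setminus (-T_s)$ and note that $\chi \in U$ forces $-\chi \notin T_s$; hence $U$ avoids the $2$-torsion and meets each pair $\{\chi, -\chi\}$ with $\chi \neq -\chi$ at most once, which gives $|U| \leq (nm - t_2)/2$.
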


\begin{proof}
Let $U = T_s \setminus ( - T_s )$, so that $\dim \Hull ( \CC_s ) = |U|$ by \cref{eq-transitive-hull}. If $\chi \in U$ then $\chi \notin - T_s$, hence $- \chi \notin T_s$ and a fortiori $- \chi \notin U$. In particular $\chi \neq - \chi$, so $U$ contains no character of order at most two, and $U$ meets each of the $(nm - t_2)/2$ pairs $\{ \chi, - \chi \}$ with $\chi \neq - \chi$ in at most one element.
\end{proof}

\begin{exa}\label{exa-torsion}
Let $q = 11$, $n = 2$, $m = 2$, $d = 4$ and $f (x) = x^4 - x^2 + 7 = g ( x^2 )$ with $g (u) = u^2 - u + 7$. The discriminant of $g$ is $6$, a non-square modulo $11$, so $g$ is irreducible and $f$ has no root in $\F_{11}$; $f$ is separable and $m(f) = 2$. Moreover $g(0) = 7$ is not a square, and among the values $g(1) = 7$, $g(3) = 2$, $g(4) = 8$, $g(5) = 5$, $g(9) = 2$ taken on the squares of representatives of $\F_{11}^* / \{ \pm 1 \}$ exactly one, namely $g(5) = 5$, is a square. Hence \cref{prop-transitive-criterion} applies, $S = \{ 4, 7 \}$ and $N = 4 = nm$. But every character of $\Z_2 \times \Z_2$ satisfies $\chi = - \chi$, so $T_s = - T_s$ for every $s$ and $\Hull ( \CC_s ) = 0$ for every $s$, while $g_\XX = 1$. Thus $\max_s \dim \Hull ( \CC_s ) = 0 < g_\XX$, and \cref{prop-torsion-bound} is sharp here, giving $(nm - t_2)/2 = 0$.
\end{exa}

\cref{tab-transitive} lists strata satisfying \cref{prop-transitive-criterion}, with $f$ chosen as the first polynomial of the prescribed shape meeting condition (3), together with the two bounds and the largest hull dimension attained. In every case the computed hull dimensions agree with \cref{eq-transitive-hull} for all $s$, and in every line but the first the hull is nonzero over an entire interval of degrees.

\begin{table}[ht]
\caption{Strata with a regular action of $H_m = \Z_n \times \Z_m$, and their maximal hull dimensions.}\label{tab-transitive}
\begin{tabular}{rrrrlrrrrr}
\toprule
$q$ & $n$ & $m$ & $d$ & $f$ & $N = |H_m|$ & $t_2$ & $\frac{nm-t_2}{2}$ & $g_\XX$ & $h_{\max}$ \\
\midrule
$11$ & $2$ & $2$  & $4$  & $x^4 - x^2 + 7$ & $4$  & $4$ & $0$  & $1$ & $0$ \\
$7$  & $2$ & $6$  & $6$  & $x^6 + 3$    & $12$ & $4$ & $4$  & $2$ & $2$ \\
$7$  & $3$ & $6$  & $6$  & $x^6 + 5$    & $18$ & $2$ & $8$  & $4$ & $4$ \\
$11$ & $2$ & $5$  & $5$  & $x^5 + 6$    & $10$ & $2$ & $4$  & $2$ & $2$ \\
$11$ & $2$ & $10$ & $10$ & $x^{10} + 2$ & $20$ & $4$ & $8$  & $4$ & $4$ \\
$11$ & $5$ & $5$  & $5$  & $x^5 + 2$    & $25$ & $1$ & $12$ & $6$ & $6$ \\
$13$ & $2$ & $6$  & $6$  & $x^6 + 5$    & $12$ & $4$ & $4$  & $2$ & $2$ \\
$13$ & $3$ & $6$  & $6$  & $x^6 + 2$    & $18$ & $2$ & $8$  & $4$ & $4$ \\
$17$ & $2$ & $8$  & $8$  & $x^8 + 5$    & $16$ & $4$ & $6$  & $3$ & $3$ \\
$19$ & $2$ & $18$ & $18$ & $x^{18} + 3$ & $36$ & $4$ & $16$ & $8$ & $8$ \\
\bottomrule
\end{tabular}
\end{table}

In every line the last column equals the minimum of the two preceding ones, and it is tempting to record that as a theorem. It is not one. Nine of the ten lines have $m = d$, so by \cref{rem-binomial} nine of the ten curves are binomial, forms of $y^n = x^d + 1$ carrying no parameter and realising only seven distinct types $(n,d)$. The first line, the only one on which $f$ carries a parameter, has $m = 2 < 4 = d$, and there both expressions vanish. On a line with $m = d$ the minimum is the genus: the inequality $g_\XX \leq (nd - t_2)/2$ is equivalent to $t_2 \leq n + d + c - 2$, which holds because $t_2 \leq 4 \leq n + d + c - 2$ for all $n \geq 2$ and $d \geq 3$. The table therefore tests a single law, $h_{\max} = g_\XX$ on the locus $m = d$, and carries no information about the regime $m < d$. That law is a theorem with exactly one exceptional pair $(n,d)$, and it is \cref{thm-hmax}. The minimum itself fails in both directions, by \cref{exa-cex-above,exa-cex-below}. What replaces it is not an expression in $g_\XX$ and $t_2$ but a covering number.

%\subsection{The hull as a covering number}\label{ssec-covering}

\cref{thm-transitive-hull} determines the hull at each $s$ and says nothing about how the answer moves with $s$, whereas a maximum over $s$ is precisely a statement about that motion. The defining set is the sublevel set of a single function on the character group, and the two conditions $\chi \in T_s$ and $- \chi \notin T_s$ then become the two endpoints of an interval of degrees. The hull dimension at $s$ is the number of those intervals containing $s$, and the maximal hull dimension is the covering number of an explicit family of $nm$ intervals. Both depend on the stratum only through $(n,d,m)$.

\begin{defn}\label{def-weight}
Identify $\widehat{H}_m$ with $\Z_m \times \Z_n$ as in \cref{lem-defining-set}. The \emph{weight} of a character $\chi = ( \alpha, \beta )$, written with the least nonnegative representatives $0 \leq \alpha \leq m-1$ and $0 \leq \beta \leq n-1$, is the weighted degree
\[
\wt ( \chi ) \; = \; \wt \left( x^\alpha y^\beta \right) \; = \; \frac{n}{c} \, \alpha + \frac{d}{c} \, \beta .
\]
\end{defn}

\begin{lem}\label{lem-sublevel}
Assume the criterion of \cref{prop-transitive-criterion}. Then, for every $s \geq 0$,
\[
T_s \; = \; \left\{ \, \chi \in \widehat{H}_m \; : \; \wt ( \chi ) \leq s \, \right\} .
\]
\end{lem}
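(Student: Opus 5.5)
The plan is to prove the two inclusions directly from the description of $T_s$ in \cref{lem-defining-set}, using nothing beyond the basis $B_s$ of \cref{lem-bridge}(2). The only structural input is that the character attached to a monomial $x^i y^j$ is $(i \bmod m, j \bmod n)$, as in the proof of \cref{lem-defining-set}. The identification of $\widehat{H}_m$ with $\Z_m \times \Z_n$ in \cref{def-weight} is the same one, so the only thing to match is the least nonnegative representatives $(\alpha,\beta)$ of a character against the exponents $(i,j)$ of monomials in $B_s$.

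For the inclusion $\supseteq$, I would take $\chi = (\alpha, \beta)$ with $0 \leq \alpha \leq m-1$, $0 \leq \beta \leq n-1$ and $\wt(\chi) \leq s$. By \cref{def-weight} this reads $\tfrac{n}{c}\alpha + \tfrac{d}{c}\beta \leq s$. Since $\alpha \geq 0$ and $0 \leq \beta \leq n-1$, the monomial $x^\alpha y^\beta$ satisfies every defining condition of $B_s$, so it lies in $B_s$. Its character is $(\alpha \bmod m, \beta \bmod n) = \chi$, hence $\chi \in T_s$.

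For the inclusion $\subseteq$, I would take $x^i y^j \in B_s$, so that $i \geq 0$, $0 \leq j \leq n-1$ and $\tfrac{n}{c} i + \tfrac{d}{c} j \leq s$. Its character $\chi$ has least representatives $\beta = j$, because $j$ is already reduced, and $\alpha = i \bmod m$, which satisfies $0 \leq \alpha \leq i$. Since the weights $n/c$ and $d/c$ are positive, this gives
\[
\wt(\chi) = \tfrac{n}{c}\alpha + \tfrac{d}{c} j \leq \tfrac{n}{c} i + \tfrac{d}{c} j \leq s .
\]

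I expect no real obstacle. The one point to state carefully is that reducing the $x$-exponent modulo $m$ can only lower the weight, while the $y$-exponent needs no reduction because $B_s$ already restricts it to $[0, n-1]$. This monotonicity is exactly what turns $T_s$ into a sublevel set of $\wt$. The case $s = 0$ is covered by the same argument, with $B_0 = \{1\}$ and $T_0 = \{(0,0)\}$.
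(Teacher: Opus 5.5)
Your proposal is correct and follows essentially the same route as the paper. The paper packages your two inclusions into a single observation: among the monomials with $0 \leq j \leq n-1$ of a fixed character $(\alpha,\beta)$, the monomial $x^\alpha y^\beta$ has the smallest weighted degree. That observation is exactly the monotonicity under reduction of the $x$-exponent modulo $m$ that you isolate.
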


\begin{proof}
By \cref{lem-bridge} the monomial $x^i y^j$ with $i \geq 0$ and $0 \leq j \leq n-1$ lies in $B_s$ if and only if $\frac{n}{c} i + \frac{d}{c} j \leq s$, and its character is $( i \bmod m, j \bmod n )$ by \cref{lem-defining-set}. Fix $\chi = ( \alpha, \beta )$ with $0 \leq \alpha \leq m-1$ and $0 \leq \beta \leq n-1$. The monomials of that character with $0 \leq j \leq n-1$ are those with $j = \beta$ and $i \equiv \alpha \pmod m$, and among them $x^\alpha y^\beta$ has the smallest weighted degree, both weights $\frac{n}{c}$ and $\frac{d}{c}$ being positive. Hence $\chi \in T_s$ if and only if $x^\alpha y^\beta \in B_s$, which is $\wt ( \chi ) \leq s$.
\end{proof}

\begin{thm}\label{thm-covering}
Assume the criterion of \cref{prop-transitive-criterion} and, for $\chi \in \widehat{H}_m$, define
\[
I_\chi \; = \; \left\{ \, k \in \Z \; : \; \wt ( \chi ) \leq k < \wt ( - \chi ) \, \right\} .
\]
Then, for every $s \geq 0$,
\begin{equation}\label{eq-covering}
\Hull ( \CC_s ) \; = \; \bigoplus_{\chi \, : \, s \, \in \, I_\chi} \left( \F_q^N \right)_\chi , \qquad \dim \Hull ( \CC_s ) \; = \; \# \left\{ \, \chi \in \widehat{H}_m \; : \; s \in I_\chi \, \right\} .
\end{equation}
\end{thm}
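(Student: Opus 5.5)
The plan is to combine \cref{thm-transitive-hull} with \cref{lem-sublevel} and then translate the membership conditions into the interval language. By \cref{thm-transitive-hull}, $\Hull ( \CC_s )$ is the sum of the lines $( \F_q^N )_\chi$ over $\chi \in T_s \setminus ( - T_s )$, and its dimension is the number of such characters. So it suffices to show, for each fixed $\chi \in \widehat{H}_m$ and each $s \geq 0$, that
\[
\chi \in T_s \setminus ( - T_s ) \iff s \in I_\chi .
\]

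First I would rewrite the condition $\chi \notin - T_s$. It says $- \chi \notin T_s$, since negation is an involution of $\widehat{H}_m$. By \cref{lem-sublevel}, applied twice, $\chi \in T_s$ is $\wt ( \chi ) \leq s$, and $- \chi \notin T_s$ is $\wt ( - \chi ) > s$. Here $- \chi$ is reduced to least nonnegative representatives before its weight is taken, as in \cref{def-weight}. The conjunction of the two conditions is $\wt ( \chi ) \leq s < \wt ( - \chi )$. Since $s$ is an integer, this is exactly $s \in I_\chi$.

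Summing over $\chi$ then gives both equalities in \cref{eq-covering}. The direct sum decomposition is inherited verbatim from \cref{eq-transitive-hull}, and the dimension count holds because the isotypic components are lines by \cref{lem-defining-set}.

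The only point needing care is that \cref{lem-sublevel} is stated for the defining set indexed by least nonnegative representatives. I would check that $\wt$ is evaluated on the canonical representative of $- \chi$, namely $( (-\alpha) \bmod m, (-\beta) \bmod n )$, and not on $( - \alpha, - \beta )$. With that convention fixed, the argument is a direct substitution and I expect no real obstacle. I would also note that $I_\chi$ is empty when $\wt ( - \chi ) \leq \wt ( \chi )$, which covers the self-inverse characters and is consistent with \cref{prop-torsion-bound}.
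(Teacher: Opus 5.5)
Your proposal is correct and is the paper's proof: \cref{thm-transitive-hull} reduces the hull to the lines indexed by $T_s \setminus ( - T_s )$, and \cref{lem-sublevel}, applied to $\chi$ and to $- \chi$, turns the two membership conditions into $\wt ( \chi ) \leq s < \wt ( - \chi )$. Your point that $\wt ( - \chi )$ is computed on least nonnegative representatives is the convention of \cref{def-weight} that the argument needs.
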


\begin{proof}
By \cref{thm-transitive-hull} the hull is the sum of the isotypic lines indexed by $T_s \setminus ( - T_s )$. By \cref{lem-sublevel} the condition $\chi \in T_s$ reads $\wt ( \chi ) \leq s$, and the condition $\chi \notin - T_s$, that is $- \chi \notin T_s$, reads $\wt ( - \chi ) > s$.
\end{proof}

The intervals are governed by one reflection identity, the analogue on the character group of the identity $G \vee A - D = K - G \wedge A$ of \cref{thm-canonical}. Passing from $\chi$ to $- \chi$ replaces the weight by its complement, and the complement is taken with respect to a modulus that records only which of the two coordinates of $\chi$ vanish.

\begin{lem}\label{lem-reflection}
Define
\[
e_m \; = \; \frac{nm}{c} , \qquad e \; = \; \frac{nd}{c} , \qquad \Omega \; = \; e_m + e .
\]
Then $e_m \leq e$, with equality if and only if $m = d$, and for $\chi = ( \alpha, \beta ) \in \widehat{H}_m$ written with the least nonnegative representatives,
\[
\wt ( \chi ) + \wt ( - \chi ) \; = \;
\begin{cases}
0, & \alpha = 0, \; \beta = 0, \\
e_m, & \alpha \neq 0, \; \beta = 0, \\
e, & \alpha = 0, \; \beta \neq 0, \\
\Omega, & \alpha \neq 0, \; \beta \neq 0 .
\end{cases}
\]
\end{lem}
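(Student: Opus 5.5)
The plan is a direct computation with least nonnegative representatives. The inequality $e_m \leq e$ comes first: $m$ divides $d$, since $m \mid m(f)$ and $m(f) \mid d$ by \cref{def-mf}, so $m \leq d$. Multiplying by $n/c > 0$ gives $e_m \leq e$, with equality exactly when $m = d$.

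For the reflection identity, write $\chi = (\alpha, \beta)$ with $0 \leq \alpha \leq m-1$ and $0 \leq \beta \leq n-1$. We then need the least nonnegative representatives of $-\chi = (-\alpha \bmod m, -\beta \bmod n)$.

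The first coordinate of $-\chi$ is $0$ if $\alpha = 0$ and $m - \alpha$ otherwise. Likewise, the second coordinate is $0$ if $\beta = 0$ and $n - \beta$ otherwise.

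By \cref{def-weight} the weight is additive over the two coordinates: $\wt(\chi) = \frac{n}{c}\alpha + \frac{d}{c}\beta$, and each coordinate contributes independently. The $x$-contribution to $\wt(\chi) + \wt(-\chi)$ is therefore
\begin{itemize}
\item[] $\frac{n}{c}(\alpha + (m - \alpha)) = \frac{nm}{c} = e_m$ when $\alpha \neq 0$, and $0$ when $\alpha = 0$.
\end{itemize}
The $y$-contribution is
\begin{itemize}
\item[] $\frac{d}{c}(\beta + (n - \beta)) = \frac{nd}{c} = e$ when $\beta \neq 0$, and $0$ when $\beta = 0$.
\end{itemize}
Adding the two contributions in each of the four cases gives $0$, $e_m$, $e$ and $e_m + e = \Omega$, as stated.

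No real obstacle is expected. The only point needing care is that $\wt$ is defined through least nonnegative representatives, so the representative of $-\chi$ must be computed casewise: the negation of $0$ is $0$, not $m$ or $n$. This casewise behaviour is exactly what produces the four-way split. It is also worth noting that $e_m$ and $e$ are integers even though the individual weights are rational, since $c \mid n$.
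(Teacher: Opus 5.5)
Your proposal is correct and matches the paper's proof: compute the least nonnegative representatives of $-\alpha$ and $-\beta$ casewise, add using $\frac{n}{c}m = e_m$ and $\frac{d}{c}n = e$, and get $e_m \leq e$ from $m \mid m(f) \mid d$. One harmless slip is in your closing aside: the individual weights are already integers, not merely rational, because $c = \gcd(n,d)$ divides both $n$ and $d$, so $\frac{n}{c}$ and $\frac{d}{c}$ are integers.
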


\begin{proof}
The least nonnegative representative of $- \alpha$ in $\Z_m$ is $m - \alpha$ if $\alpha \neq 0$ and $0$ if $\alpha = 0$, and likewise for $\beta$ in $\Z_n$; now add, using $\frac{n}{c} m = e_m$ and $\frac{d}{c} n = e$. The inequality $e_m \leq e$ is $m \leq d$, which holds because $m \mid m(f) \mid d$.
\end{proof}

Thus the $nm$ intervals fall into three concentric families, according to the type of $\chi$. Within each family the two endpoints of $I_\chi$ are reflections of one another in a fixed centre, so the family is totally ordered by inclusion, and the three centres $\frac{e_m - 1}{2}$, $\frac{e-1}{2}$ and $\frac{\Omega - 1}{2}$ occur in that order. Counting the three families separately gives the hull.

\begin{prop}\label{prop-abc}
Assume the criterion of \cref{prop-transitive-criterion}. For $s \geq 0$ define
\[
\begin{split}
h_1 (s) &= \# \left\{ \, \alpha \; : \; 1 \leq \alpha \leq m-1, \; \tfrac{n}{c} \alpha \leq \min \{ s, \, e_m - 1 - s \} \, \right\} , \\
h_2 (s) &= \# \left\{ \, \beta \; : \; 1 \leq \beta \leq n-1, \; \tfrac{d}{c} \beta \leq \min \{ s, \, e - 1 - s \} \, \right\} , \\
h_3 (s) &= \# \left\{ \, ( \alpha, \beta ) \in [1, m-1] \times [1, n-1] \; : \; \tfrac{n}{c} \alpha + \tfrac{d}{c} \beta \leq \min \{ s, \, \Omega - 1 - s \} \, \right\} .
\end{split}
\]
Then $\dim \Hull ( \CC_s ) = h_1 (s) + h_2 (s) + h_3 (s)$, and the first two counts are given in closed form by
\[
h_1 (s) = \max \left\{ 0, \; \left\lfloor \frac{c \min \{ s, e_m - 1 - s \}}{n} \right\rfloor \right\} , \qquad h_2 (s) = \max \left\{ 0, \; \left\lfloor \frac{c \min \{ s, e - 1 - s \}}{d} \right\rfloor \right\} .
\]
\end{prop}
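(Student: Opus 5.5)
The plan is to start from \cref{thm-covering}, which gives $\dim \Hull ( \CC_s ) = \#\{ \chi : \wt(\chi) \leq s < \wt(-\chi) \}$. I would split $\widehat{H}_m \setminus \{0\}$ into three types according to \cref{lem-reflection}: $\alpha \neq 0, \beta = 0$; $\alpha = 0, \beta \neq 0$; and $\alpha \neq 0, \beta \neq 0$. The trivial character has $\wt(\chi) = \wt(-\chi) = 0$, so $I_0 = \emptyset$ and it contributes nothing.

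Within a type with reflection modulus $R \in \{ e_m, e, \Omega \}$, \cref{lem-reflection} gives $\wt(-\chi) = R - \wt(\chi)$. The condition $s \in I_\chi$ therefore reads $\wt(\chi) \leq s$ and $\wt(\chi) < R - s$. Since weights are integers, the second inequality is $\wt(\chi) \leq R - 1 - s$. Together they are the single condition $\wt(\chi) \leq \min\{ s, R-1-s \}$.

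Substituting $\wt(\alpha,0) = \tfrac{n}{c}\alpha$, $\wt(0,\beta) = \tfrac{d}{c}\beta$ and $\wt(\alpha,\beta) = \tfrac nc \alpha + \tfrac dc \beta$, with the index ranges $1 \leq \alpha \leq m-1$ and $1 \leq \beta \leq n-1$, yields exactly $h_1$, $h_2$ and $h_3$. Summing the three gives the first assertion.

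For the closed forms, set $r = \min\{ s, e_m - 1 - s \}$. Then $h_1(s)$ counts the $\alpha$ with $1 \leq \alpha \leq \min\{ m-1, \lfloor cr/n \rfloor \}$. This equals $\max\{0, \lfloor cr/n \rfloor\}$ provided the cap $m-1$ is never active. That holds because $r \leq (e_m - 1)/2 < e_m = nm/c$, so $cr/n < m$ and $\lfloor cr/n \rfloor \leq m-1$. The case $r < 0$, which occurs for $s \geq e_m$, is absorbed by the $\max$.

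For $h_2$ the same argument applies with $r' = \min\{ s, e - 1 - s \} \leq (e-1)/2 < e = nd/c$. This gives $cr'/d < n$, so $\lfloor cr'/d \rfloor \leq n - 1$.

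The only point needing care is this verification that the upper bounds $m-1$ and $n-1$ are automatically satisfied. Without it, the closed forms would require a further $\min$. The integrality step that turns $< R - s$ into $\leq R - 1 - s$ also has to be checked: it uses that $n/c$ and $d/c$ are integers.
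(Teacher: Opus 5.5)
Your proposal is correct and follows the paper's proof essentially step by step. You partition the characters by which coordinate vanishes, apply \cref{thm-covering} with the reflection identity of \cref{lem-reflection}, and use integrality of the weights to turn $\wt(\chi) < R - s$ into $\wt(\chi) \leq R - 1 - s$. The only cosmetic difference is how you show the caps $m-1$ and $n-1$ are inactive: you use $\min\{s, R-1-s\} \leq (R-1)/2 < R$, whereas the paper observes directly that $\tfrac{n}{c}\alpha \leq e_m - 1$ forces $\alpha < m$, and likewise for $\beta$.
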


\begin{proof}
Partition $\widehat{H}_m$ according to which of $\alpha$ and $\beta$ vanish, and apply \cref{thm-covering} with \cref{lem-reflection}. The trivial character has $I_\chi = \emptyset$ and contributes nothing. If $\alpha \neq 0$ and $\beta = 0$ then $\wt ( - \chi ) = e_m - \wt ( \chi )$, so $s \in I_\chi$ reads $\wt ( \chi ) \leq s$ together with $\wt ( \chi ) \leq e_m - 1 - s$, which is the condition defining $h_1 (s)$; here the constraint $\alpha \leq m-1$ is automatic, since $\frac{n}{c} \alpha \leq e_m - 1$ forces $\alpha < m$. The cases $\alpha = 0$, $\beta \neq 0$ and $\alpha, \beta \neq 0$ are identical, with $e_m$ replaced by $e$ and by $\Omega$, the constraint $\beta \leq n-1$ being automatic in the second case. The two displayed formulas count the multiples of $\frac{n}{c}$, respectively of $\frac{d}{c}$, in an initial segment of the positive integers.
\end{proof}

\begin{cor}\label{cor-hmax-computable}
Assume the criterion of \cref{prop-transitive-criterion} and let $h_{\max} = \max_{s \geq 1} \dim \Hull ( \CC_s )$. Then $\dim \Hull ( \CC_s )$, and hence $h_{\max}$, depends only on $(n,d,m)$ and on $s$, and not on $q$, on $f$ or on $\YY$. Moreover, if $h_{\max} > 0$ then the maximum is attained at $s = \wt ( \chi )$ for some $\chi \in \widehat{H}_m$, so that
\[
h_{\max} \; = \; \max_{\chi \, \in \, \widehat{H}_m} \; \dim \Hull \left( \CC_{\wt ( \chi )} \right) .
\]
The maximum runs over at most $nm$ degrees, all of them in the range $1 \leq s \leq \frac{n}{c} ( m-1 ) + \frac{d}{c} ( n-1 )$.
\end{cor}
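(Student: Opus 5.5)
The plan is to read everything off \cref{thm-covering} together with \cref{lem-reflection}. By \cref{thm-covering}, $\dim \Hull ( \CC_s )$ equals the number of characters $\chi \in \widehat{H}_m$ with $\wt ( \chi ) \leq s < \wt ( - \chi )$. The weight of $\chi$ is computed from its least nonnegative representatives $( \alpha, \beta ) \in [0,m-1] \times [0,n-1]$ and the weights $n/c$, $d/c$ alone. So the intervals $I_\chi$, and hence the function $s \mapsto \dim \Hull ( \CC_s )$, depend only on $(n,d,m)$. Here $c = \gcd (n,d)$ is itself a function of $(n,d)$. Neither $q$, nor $f$, nor $\YY$ enters. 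This gives the first assertion at once, and the assertion for $h_{\max}$ follows by taking the maximum over $s$.

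For the location of the maximum, I would use that the counting function $\kappa (s) = \# \{ \chi : s \in I_\chi \}$ of a finite family of integer intervals $[ a_\chi, b_\chi )$ is a step function. It can increase only at a left endpoint $a_\chi = \wt ( \chi )$. Concretely, if $h_{\max} > 0$, take any $s$ attaining the maximum and let $s' \leq s$ be the largest integer with $s' = \wt ( \chi )$ for some $\chi$ with $s \in I_\chi$. Such a $\chi$ exists because $\kappa (s) > 0$. Every $\chi'$ with $s \in I_{\chi'}$ then satisfies $\wt ( \chi' ) \leq s'$ and $\wt ( - \chi' ) > s \geq s'$, so $s' \in I_{\chi'}$. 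Hence $\kappa ( s' ) \geq \kappa (s)$, and equality holds by maximality. The point $s'$ has the form $\wt ( \chi )$ for a $\chi$ with $I_\chi \neq \emptyset$, and such a $\chi$ is not the trivial character, whose interval is empty by \cref{lem-reflection}. Therefore $s' = \wt ( \chi ) \geq 1$, and $s'$ lies in the range $s \geq 1$ over which $h_{\max}$ is defined. This gives the displayed formula, the maximum running over the at most $nm$ values $\wt ( \chi )$.

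Finally, for the range, every character has $\alpha \leq m-1$ and $\beta \leq n-1$. So $\wt ( \chi ) \leq \frac{n}{c} ( m-1 ) + \frac{d}{c} ( n-1 )$, and the nontrivial characters have $\wt ( \chi ) \geq 1$ since both weights are positive integers. The characters relevant to the maximum are nontrivial, as observed above, so all candidate degrees lie in the stated range.

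The only mildly delicate step is the reduction to left endpoints. It needs care in two places: the maximum is taken over $s \geq 1$ rather than $s \geq 0$, and the trivial character has weight $0$ but an empty interval. Both are handled by the observation that $s'$ is the weight of a character with nonempty $I_\chi$, hence nontrivial. Everything else is immediate from \cref{thm-covering} and \cref{def-weight}.
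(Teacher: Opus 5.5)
Your proposal is correct and is essentially the paper's proof: the hull dimension is the covering number of the intervals $I_\chi$, whose endpoints depend only on $(n,d,m)$, and the maximum sits at a left endpoint $\wt(\chi)$ of a nonempty interval, which excludes the trivial character. The paper reads the dependence off the counts of \cref{prop-abc} rather than directly from \cref{thm-covering}, and it asserts the left-endpoint step in one line where you spell it out.
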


\begin{proof}
The three counts of \cref{prop-abc} involve only $n$, $d$, $m$, $c$ and $s$, and $c = \gcd (n,d)$. By \cref{eq-covering} the function $s \mapsto \dim \Hull ( \CC_s )$ is a sum of indicator functions of intervals of integers, and such a function attains its maximum at the left endpoint of one of the intervals that are nonempty, that is at some $\wt ( \chi )$; the largest weight occurring is $\frac{n}{c} ( m-1 ) + \frac{d}{c} ( n-1 )$, and the trivial character contributes an empty interval.
\end{proof}

That $h_{\max}$ is a function of $(n,d,m)$ alone already accounts for a feature of \cref{tab-transitive}: the lines with $(n,m,d) = (2,6,6)$ over $\F_7$ and over $\F_{13}$ agree in the last column, as do the lines with $(n,m,d) = (3,6,6)$, although the curves and the fields differ.

%\subsection{Two counterexamples}\label{ssec-counterexamples}

The next two examples lie on transitive strata of the same type $n = m = 3$ over the same field. The maximal hull dimension exceeds $\min \{ g_\XX, (nm-t_2)/2 \}$ in the first and falls short of it in the second, so no such minimum can be the answer.

\begin{exa}\label{exa-cex-above}
Let $q = 7$, $n = 3$ and $f (x) = x^3 + 2$, so that $d = 3$, $c = 3$, $m(f) = 3$, $g_\XX = 1$ and $M = 5$. Take $m = 3$, $\lambda = 2$ of order $3$ in $\F_7^*$, and $g (u) = u + 2$. The cubes in $\F_7^*$ are $\{ 1, 6 \}$, so $-2 = 5$ is not a cube, $f$ is separable with no root in $\F_7$, and \textup{(H1)} holds; $n \mid q-1$ gives \textup{(H4)}. The value $g(0) = 2$ is not a cube, and of the two values $g (1) = 3$ and $g ( 3^3 ) = g (6) = 1$ taken on the representatives $1, 3$ of $\F_7^* / \langle \lambda \rangle$ exactly one is a cube. Hence \cref{prop-transitive-criterion} applies, $S = \{ 3, 5, 6 \}$ and $N = 9 = nm$. Here $t_2 = 1$, so
\[
\min \left\{ \, g_\XX , \; \frac{nm-t_2}{2} \, \right\} \; = \; \min \{ 1, 4 \} \; = \; 1 .
\]
But $\frac{n}{c} = \frac{d}{c} = 1$, so $B_1 = \{ 1, x, y \}$ and
\[
T_1 = \{ (0,0), (1,0), (0,1) \} , \qquad - T_1 = \{ (0,0), (2,0), (0,2) \} , \qquad T_1 \cap ( - T_1 ) = \{ (0,0) \} ,
\]
whence $\dim \Hull ( \CC_1 ) = 3 - 1 = 2$ by \cref{eq-transitive-hull}. The profiles are $\dim \CC_s = 3, 6, 8, 9$ and $\dim \Hull ( \CC_s ) = 2, 1, 1, 0$ for $s = 1, 2, 3, 4$, and $\Hull ( \CC_s ) = 0$ for $s \geq 4$. Thus $h_{\max} = 2 > 1$, attained at $s = 1 \leq M$.
\end{exa}

\begin{exa}\label{exa-cex-below}
Let $q = 7$, $n = 3$ and $f (x) = x^6 + 5 x^3 + 2 = g ( x^3 )$ with $g (u) = u^2 + 5u + 2$, so that $d = 6$, $c = 3$, $m(f) = 3$, $g_\XX = 4$ and $M = 5$. Again $f$ is separable with no root in $\F_7$, the value $g (0) = 2$ is not a cube, and of $g (1) = 1$ and $g (6) = 5$ exactly one is, so \cref{prop-transitive-criterion} applies with $m = 3$, $S = \{ 1, 2, 4 \}$ and $N = 9 = nm$. Here $t_2 = 1$, so
\[
\min \left\{ \, g_\XX , \; \frac{nm-t_2}{2} \, \right\} \; = \; \min \{ 4, 4 \} \; = \; 4 ,
\]
while $\frac{n}{c} = 1$, $\frac{d}{c} = 2$ and the profiles are $\dim \CC_s = 2, 4, 5, 7, 8, 9$ and $\dim \Hull ( \CC_s ) = 1, 1, 2, 2, 1, 0$ for $s = 1, \dots, 6$. Thus $h_{\max} = 2 < 4$, attained at $s = 3$ and $s = 4$, both at most $M$.
\end{exa}

The two examples show that $h_{\max}$ is neither $g_\XX$ nor $(nm-t_2)/2$ nor their minimum. In particular the genus is not an upper bound for the hull dimension on a transitive stratum: in \cref{exa-cex-above} it is exceeded. The bound of \cref{prop-torsion-bound} is unaffected, and both examples respect it. What is true is the case $m = d$, which is where the evidence of \cref{tab-transitive} lay, and \cref{exa-cex-above} is exactly its unique exception.

%\subsection{The maximal hull dimension when \texorpdfstring{$m = d$}{m = d}}\label{ssec-hmax}

The genus enters through a lattice count, in the same triangle that governs \cref{lem-bridge}.

\begin{lem}\label{lem-genus-lattice}
Let $n \geq 2$ and $d \geq 3$, let $c = \gcd (n,d)$ and $e = \frac{nd}{c}$. Then
\[
g_\XX \; = \; \# \left\{ \, ( \alpha, \beta ) \in \Z_{\geq 1}^2 \; : \; \tfrac{n}{c} \alpha + \tfrac{d}{c} \beta \leq e-1 \, \right\} .
\]
\end{lem}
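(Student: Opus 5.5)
We need to prove that $g_\XX$ equals the number of lattice points $(\alpha,\beta)$ with $\alpha,\beta \geq 1$ and $\frac{n}{c}\alpha + \frac{d}{c}\beta \leq e-1$. The plan is to count these points directly and compare with \cref{eq-genus}. Write $a = n/c$ and $b = d/c$, so that $\gcd(a,b)=1$ and $e = abc$. The inequality $a\alpha + b\beta \leq e-1$ is then the same as $a\alpha + b\beta < abc$, strictly inside the triangle with vertices $(0,0)$, $(bc,0)$ and $(0,ac)$.

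The count uses a Pick-type argument. The open triangle $\{\alpha,\beta \geq 1,\ a\alpha + b\beta < abc\}$, together with the lattice points on the open hypotenuse, consists of the interior lattice points of the closed right triangle $T$ with legs $bc$ and $ac$, plus the boundary points on its hypotenuse. By symmetry $(\alpha,\beta)\mapsto(bc-\alpha,\,ac-\beta)$, the interior points of the rectangle $[0,bc]\times[0,ac]$ split into two halves and the diagonal. The rectangle has $(bc-1)(ac-1)$ interior points. The points on the open diagonal $a\alpha + b\beta = abc$ with $0<\alpha<bc$ are those with $\alpha$ a multiple of $b$, since $\gcd(a,b)=1$; there are $c-1$ of them. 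Hence the number of points strictly below the diagonal with $\alpha,\beta\geq 1$ is
\[
\tfrac{1}{2}\big((bc-1)(ac-1) - (c-1)\big).
\]

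It remains to compare this with \cref{eq-genus}. Substituting $bc = d$ and $ac = n$ gives $\frac{(n-1)(d-1) - c + 1}{2}$, which is exactly $g_\XX$.

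The only point needing care is the boundary count on the diagonal, namely checking that $\gcd(a,b)=1$ forces $\alpha \in b\Z$. That follows from $a\alpha \equiv 0 \pmod b$. Everything else is the routine central-symmetry halving. An alternative route would use \cref{eq-ell} and Riemann-Roch, counting the gaps of $L(sD_\infty)$, but the symmetry argument is shorter and self-contained, so I would use it.
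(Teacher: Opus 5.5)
Your proof is correct, and it is essentially the paper's argument with one ingredient proved by hand instead of quoted. The paper also rewrites the condition as $n\alpha + d\beta < nd$, that is, as the interior lattice points of the triangle with vertices $(0,0)$, $(d,0)$, $(0,n)$. It then applies Pick's theorem with area $nd/2$ and $n+d+c$ boundary points.

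You instead remove the $c-1$ lattice points on the open hypotenuse from the $(n-1)(d-1)$ interior points of the rectangle $[0,d]\times[0,n]$, and halve what remains using the central symmetry $(\alpha,\beta)\mapsto(d-\alpha,\,n-\beta)$. That symmetry exchanges the two sides of the hypotenuse because it sends $a\alpha+b\beta$ to $2abc-(a\alpha+b\beta)$. This is the one computation you left implicit, and it is worth writing down.

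Both arguments rest on the same $\gcd$ count of lattice points on the hypotenuse. Yours is self-contained and is in effect the standard proof of Pick's theorem for a lattice right triangle, while the paper's is a one-line citation.
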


\begin{proof}
The condition reads $n \alpha + d \beta < nd$ with $\alpha, \beta \geq 1$, that is, $( \alpha, \beta )$ is an interior lattice point of the triangle with vertices $(0,0)$, $(d,0)$, $(0,n)$. That triangle has area $\frac{nd}{2}$ and $d + n + c$ boundary lattice points, so by Pick's theorem the number of interior points is $\frac{nd}{2} - \frac{n+d+c}{2} + 1$, which is $\frac{(n-1)(d-1)+1-c}{2} = g_\XX$ by \cref{eq-genus}.
\end{proof}

The whole content of the next theorem is the following inequality. It says that the two linear families of \cref{prop-abc} never buy back more than the quadratic family gives up.

\begin{prop}\label{prop-key-inequality}
Let $n \geq 2$ and $d \geq 3$, let $c = \gcd (n,d)$, $n' = \frac{n}{c}$, $d' = \frac{d}{c}$ and $e = c n' d'$. For $0 \leq s \leq e-1$ let $r = e-1-s$ and define
\[
\begin{split}
& k_1 = \left\lfloor \frac{s}{n'} \right\rfloor , \quad k_2 = \left\lfloor \frac{s}{d'} \right\rfloor , \quad l_1 = \left\lfloor \frac{r}{n'} \right\rfloor , \quad l_2 = \left\lfloor \frac{r}{d'} \right\rfloor , \\
& \Theta \; = \; \# \left\{ \, ( \alpha, \beta ) \in \Z_{\geq 1}^2 \; : \; s < n' \alpha + d' \beta \leq e-1 \, \right\} .
\end{split}
\]
Then
\[
\Theta \; \geq \; \min \{ k_1, l_1 \} + \min \{ k_2, l_2 \} ,
\]
with the single exception $(n,d) = (3,3)$ and $s = r = 1$, where $\Theta = 1$ while the right hand side equals $2$.
\end{prop}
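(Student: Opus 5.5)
The plan is to count $\Theta$ row by row and column by column in the triangle of \cref{lem-genus-lattice}, extract product lower bounds, and compare them with $a+b$, where $a = \min\{k_1,l_1\}$ and $b = \min\{k_2,l_2\}$. The point set is the set of interior lattice points $(\alpha,\beta)$ of the triangle with vertices $(0,0)$, $(d,0)$, $(0,n)$ whose weight $w(\alpha,\beta) = n'\alpha + d'\beta$ lies in the band $(s, e-1]$. This band is the region between the hypotenuse $w = e-1$ and the parallel line $w = s$.

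\textbf{Row count.} Fix a row $\beta \geq 1$ with $d'\beta \leq s$, i.e.\ $1 \leq \beta \leq k_2$. The admissible $\alpha$ are the integers in the half-open interval
\[
\left( \frac{s - d'\beta}{n'}, \; \frac{e-1-d'\beta}{n'} \right],
\]
whose length is $r/n'$ and whose left end is $\geq 0$, so $\alpha \geq 1$ is automatic. A half-open interval of length $r/n'$ contains at least $\lfloor r/n' \rfloor = l_1$ integers. This gives $\Theta \geq k_2 l_1$, provided these rows are genuinely inside the triangle. Since $k_2 \leq (e-1)/d' < n$, we have $\beta \leq n-1$ on these rows, and that should suffice.

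\textbf{Column count.} Symmetrically, columns $1 \leq \alpha \leq k_1$ give $\Theta \geq k_1 l_2$.

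\textbf{Rows above the line $w=s$.} Rows with $d'\beta > s$ contribute every interior point in that row. In particular, when $k_2 = 0$ the row $\beta = 1$ alone contributes $\lfloor (e-1-d')/n' \rfloor$ points. I would check that this is $\geq \min\{k_1,l_1\}$, using $e - d' = d'(n-1)\cdot c/c$ arithmetic. The case $k_1 = 0$ is handled the same way.

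\textbf{Comparing with $a+b$.} When $a = 0$ or $b = 0$, the previous step (or the trivial bound) handles it. When $a, b \geq 1$, we have
\[
\Theta \;\geq\; \max\{k_2 l_1,\; k_1 l_2\} \;\geq\; ab,
\]
and $ab \geq a+b$ unless $a = 1$ or $b = 1$. The remaining cases are $a = 1$ or $b = 1$, where $ab = a+b-1$ and the product bound is short by exactly one.

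\textbf{Main obstacle.} The hardest part is these boundary cases, where one extra lattice point must be found. I would try the following, in order.

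First, the inequality $\max\{k_2 l_1, k_1 l_2\} \geq ab$ is wasteful whenever $k_1 \neq l_1$ or $k_2 \neq l_2$. In that case one of the two products already gains the missing point.

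Second, an interval of length $r/n'$ contains $l_1 + 1$ integers unless $r/n'$ is nearly an integer with a bad offset. So at least one of the $k_2$ rows should contain an extra point.

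Third, if both of these fail, the rows $d'\beta > s$ supply at least one point beyond the product count, as long as $d'(k_2+1) + n' \leq e-1$.

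The failure of all three mechanisms should force $k_1 = l_1 = k_2 = l_2 = 1$ with $n' = d' = 1$, hence $c = n = d$ and $e = n$. Then $s = r = 1$ forces $e = 3$, which is exactly the exceptional case $(n,d) = (3,3)$, $s = 1$. There the only interior point is $(1,1)$, of weight $2 \in (1,2]$, so $\Theta = 1$ while the right-hand side is $2$. I expect this final bookkeeping, showing that no other parameters slip through the three mechanisms, to be the delicate step. If needed, I would handle small $n', d'$ by direct enumeration.
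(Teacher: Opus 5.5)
Your skeleton is the paper's: the slicings $\Theta \ge k_2 l_1$ and $\Theta \ge k_1 l_2$, the disposal of $a=0$ or $b=0$, and $ab \ge a+b$ once $a,b \ge 2$. Your first mechanism is also correct. If $a=1$ and $k_1 \ne l_1$, one of $k_2 l_1$, $k_1 l_2$ is at least $2b \ge a+b$. If $k_1=l_1=1$ and $k_2 \ne l_2$, then $\max\{k_2,l_2\} \ge b+1$. So everything reduces to $k_1=l_1=1$ (or its mirror $k_2=l_2=1$). That residual case is the whole content of the proposition, and you do not prove it: the claim that your three mechanisms fail simultaneously only at $n'=d'=1$ is asserted, not argued.

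Your second mechanism is unreliable. For $(n,d)=(5,3)$ and $s=8$ one has $k_1=l_1=1$, $k_2=l_2=2$, yet each of the rows $\beta=1,2$ contains exactly one admissible $\alpha$. Enumerating small $n',d'$ cannot finish the argument either, because the residual family is infinite: take $d=3$, $n$ odd and prime to $3$, and $s=r=(3n-1)/2$.

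The missing idea is that the residual case pins down one exponent. If $k_1=l_1=1$ then $n' \le s,r \le 2n'-1$, so $2n' \le e-1 \le 4n'-2$, and $e=dn'$ forces $d=3$. Symmetrically, $k_2=l_2=1$ forces $n=3$. Your third mechanism then closes the argument, provided you apply it in the orientation matching the residual case.

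For $d=3$, use the extra column rather than the extra row. Since $k_1=1$ one has $2n'>s$, so the point $(2,1)$ is not among the $k_1 l_2=l_2$ points of column $\alpha=1$. It is admissible iff $2n'+d' \le 3n'-1$, i.e.\ $d'<n'$, and then $\Theta \ge l_2+1 \ge a+b$. This condition fails only when $n'=d'=1$. For $c=3$ one has $d'=1$; for $c=1$ one has $d'=3<n'=n$, since $n=2$ would give $b=0$.

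For $n=3$, the point $(1,2)$ lies in row $2=k_2+1$. It is admissible iff $n'<d'$, which gives $\Theta \ge l_1+1 \ge a+b$, again unless $n'=d'=1$. The only survivor is $(n,d)=(3,3)$ with $s=r=1$.

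This is essentially how the paper finishes. It slices on $\alpha\in\{1,2\}$ (respectively $\beta\in\{1,2\}$) and counts exactly, obtaining for instance $\Theta=r+n'-1$ when $c=3$. The reduction to $d=3$ or $n=3$ is the step your proposal lacks.
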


\begin{proof}
Let $\rho = \min \{ s, r \}$ and write $a = \lfloor \rho / n' \rfloor$, $b = \lfloor \rho / d' \rfloor$. Since the floor function is monotone, $a = \min \{ k_1, l_1 \}$ and $b = \min \{ k_2, l_2 \}$, so the claim reads $\Theta \geq a + b$, and $k_1, l_1 \geq a$, $k_2, l_2 \geq b$. Throughout, $e = n d' = d n'$.

We first prove the two bounds $\Theta \geq k_2 \, l_1$ and $\Theta \geq k_1 \, l_2$. Fix $\beta \geq 1$ with $d' \beta \leq s$; there are $k_2$ such $\beta$. The pairs $( \alpha, \beta )$ counted by $\Theta$ are those with $n' \alpha$ in the half open interval $( s - d' \beta , \, e-1-d' \beta ]$, of length $r$, and every such $\alpha$ is positive because $s - d' \beta \geq 0$. The interval contains at least $\lfloor r / n' \rfloor = l_1$ multiples of $n'$, so these $\beta$ contribute at least $k_2 \, l_1$ pairs to $\Theta$. Exchanging the roles of the two coordinates gives $\Theta \geq k_1 \, l_2$.

Suppose $a = 0$, so that the claim reads $\Theta \geq b$. If $k_1 = 0$ then $s < n'$, so $r \geq e - n' = n' ( d-1 )$ and $l_1 \geq d - 1 \geq 1$, whence $\Theta \geq k_2 l_1 \geq k_2 \geq b$. If $l_1 = 0$ then $r < n'$, so $s \geq e - n'$ and $k_1 \geq d - 1 \geq 1$, whence $\Theta \geq k_1 l_2 \geq l_2 \geq b$. If $b = 0$ the same argument applies with the two coordinates exchanged, using $e - d' = d' ( n-1 )$ and $n - 1 \geq 1$. From now on assume $a, b \geq 1$.

If $a \geq 2$ and $b \geq 2$, then $\Theta \geq k_2 l_1 \geq b \, a \geq a + b$. If $a = 1$ and $l_1 \geq 2$, then $\Theta \geq k_2 l_1 \geq 2 b \geq b + 1 = a + b$, and likewise $\Theta \geq k_1 l_2 \geq 2 b$ if $a = 1$ and $k_1 \geq 2$; the case $b = 1$ is symmetric. It remains to treat the cases $k_1 = l_1 = 1$ and $k_2 = l_2 = 1$.

\emph{Case $k_1 = l_1 = 1$.} Then $n' \leq s < 2n'$ and $n' \leq r < 2n'$, so $2n' \leq e - 1 \leq 4n' - 2$, and $e = d n'$ forces $d = 3$. Suppose first $3 \mid n$, so that $c = 3$, $d' = 1$, $b = \rho$, and $e = 3n'$. The pairs with $\alpha = 1$ are those with $\beta \in ( s - n' , \, 2n' - 1 ]$, and there are $3n' - 1 - s = r$ of them; the pairs with $\alpha = 2$ are those with $\beta \in [ 1, \, n' - 1 ]$, and there are $n' - 1$ of them; no $\alpha \geq 3$ occurs, since $\beta \geq 1$ forces $n' \alpha \leq 3n' - 2$. Hence $\Theta = r + n' - 1$, while $a + b = \rho + 1 \leq r + 1$, so the claim holds for $n' \geq 2$. For $n' = 1$ we are at $(n,d) = (3,3)$ with $s = r = 1$, where $\Theta = \# \{ (1,1) \} = 1$ and $a + b = 2$: the stated exception. Suppose now $3 \nmid n$, so that $c = 1$, $n' = n$ and $d' = 3$. For $n = 2$ one has $e = 6$ and $s, r \in \{ 2, 3 \}$, so $b = 0$, excluded above; hence $n \geq 4$. The pairs with $\alpha = 1$ number $\lfloor \frac{2n-1}{3} \rfloor - \lfloor \frac{s-n}{3} \rfloor \geq \lfloor \frac{3n-1-s}{3} \rfloor = l_2$, and those with $\alpha = 2$ number $\lfloor \frac{n-1}{3} \rfloor \geq 1$. Hence $\Theta \geq l_2 + 1 \geq b + 1 = a + b$.

\emph{Case $k_2 = l_2 = 1$.} Then $d' \leq s < 2d'$ and $d' \leq r < 2d'$, so $e = n d'$ forces $n = 3$, and $(n,d) = (3,3)$ was treated in the previous case. Suppose first $3 \mid d$, so that $c = 3$, $n' = 1$, $a = \rho$, and $e = 3d'$ with $d' = d/3 \geq 2$. Slicing on $\beta \in \{ 1, 2 \}$ as in the previous case gives $\Theta = r + d' - 1 \geq \rho + 1 = a + b$. Suppose now $3 \nmid d$, so that $c = 1$, $d' = d \geq 4$ and $n' = 3$. The pairs with $\beta = 1$ number $\lfloor \frac{2d-1}{3} \rfloor - \lfloor \frac{s-d}{3} \rfloor \geq \lfloor \frac{3d-1-s}{3} \rfloor = l_1$, and those with $\beta = 2$ number $\lfloor \frac{d-1}{3} \rfloor \geq 1$. Hence $\Theta \geq l_1 + 1 \geq a + b$.
\end{proof}

\begin{thm}\label{thm-hmax}
Assume the criterion of \cref{prop-transitive-criterion} and suppose $m = d$. Then $1 \leq e - 1 \leq M$,
\(
\dim \Hull \left( \CC_{e-1} \right) \; = \; \dim \Hull \left( \CC_{e} \right) \; = \; g_\XX ,
\)
and
\[
\max_{s \geq 1} \, \dim \Hull ( \CC_s ) \; = \;
\begin{cases}
g_\XX + 1 = 2 , & (n,d) = (3,3), \\
g_\XX , & \text{otherwise} .
\end{cases}
\]
\end{thm}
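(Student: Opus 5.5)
The plan is to evaluate the three counts of \cref{prop-abc} directly in the case $m = d$, where they become simple. The key simplification is that $m = d$ gives $e_m = e$ by \cref{lem-reflection} and $\Omega = 2e$. So the families $h_1$ and $h_2$ share the centre $\frac{e-1}{2}$, while $h_3$ is centred at $e - \frac12$. Throughout, write $n' = n/c$ and $d' = d/c$ as in \cref{prop-key-inequality}.

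First, the preliminary inequality. Since $d = m$ divides $q-1$, we have $e = nd/c \leq n(q-1)/c = M+1$, so $e-1 \leq M$. Also $e - 1 \geq 1$ because $e \geq d \geq 3$.

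Next, the two exact values, at $s = e-1$ and $s = e$.
\begin{itemize}
\item For $h_1$ and $h_2$: $\min\{s, e-1-s\}$ equals $0$ at $s = e-1$ and $-1$ at $s = e$, so $h_1 = h_2 = 0$ at both degrees.
\item For $h_3$: $\min\{s, 2e-1-s\} = e-1$ at both degrees.
\item Hence $h_3$ counts the pairs $(\alpha,\beta) \in \Z_{\geq 1}^2$ with $n'\alpha + d'\beta \leq e-1$. Here I must check that the range constraints are automatic. The condition $\beta \geq 1$ forces $n'\alpha < e = n'd$, so $\alpha \leq d-1 = m-1$. Symmetrically, $\alpha \geq 1$ forces $\beta \leq n-1$.
\item By \cref{lem-genus-lattice} this count is exactly $g_\XX$, which gives $\dim\Hull(\CC_{e-1}) = \dim\Hull(\CC_e) = g_\XX$.
\end{itemize}

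For the maximum, I split the range of $s$ at $e$.

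\emph{Case $s \geq e$.} Here $h_1 = h_2 = 0$. The count $h_3(s)$ uses the threshold $2e-1-s \leq e-1$, so it counts a subset of the pairs in \cref{lem-genus-lattice}, and therefore $h_3(s) \leq g_\XX$.

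\emph{Case $1 \leq s \leq e-1$.} This is the range of \cref{prop-key-inequality}, with $r = e-1-s$.
\begin{itemize}
\item Since $s \leq e - \frac12$, we have $\min\{s, 2e-1-s\} = s$. So $h_3(s)$ counts the interior lattice pairs of weight at most $s$, and equals $g_\XX - \Theta$ with $\Theta$ as in \cref{prop-key-inequality}; the range constraints are again automatic by the same argument.
\item The closed forms of \cref{prop-abc} give $h_1(s) = \lfloor \min\{s,r\}/n' \rfloor = \min\{k_1, l_1\}$ and $h_2(s) = \min\{k_2, l_2\}$, using that the floor is monotone.
\item For $h_1$ the constraint $\alpha \leq m-1$ is automatic because $e_m = e$; for $h_2$ the constraint $\beta \leq n-1$ is automatic as well.
\item Hence $\dim\Hull(\CC_s) = g_\XX - \Theta + \min\{k_1,l_1\} + \min\{k_2,l_2\}$.
\end{itemize}
By \cref{prop-key-inequality} this is at most $g_\XX$, except when $(n,d) = (3,3)$ and $s = 1$. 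In that case $g_\XX = 1$, $\Theta = 1$ and the two minima sum to $2$, giving $\dim\Hull(\CC_1) = 2 = g_\XX + 1$.

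Combining the two cases with the exact values at $e-1$ and $e$ yields the stated maximum. The whole difficulty sits in \cref{prop-key-inequality}, which we may assume. The only delicate point left is the bookkeeping: checking that the truncations $\alpha \leq m-1$ and $\beta \leq n-1$ in \cref{prop-abc} never bind when $m = d$, so that $h_3$ matches $g_\XX - \Theta$ exactly and $h_1, h_2$ match the minima of floors.
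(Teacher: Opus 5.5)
Your proposal is correct and follows essentially the same route as the paper: you specialise \cref{prop-abc} to $m = d$ (so $e_m = e$ and $\Omega = 2e$), identify $h_3$ with the lattice count of \cref{lem-genus-lattice}, bound the range $s \geq e$ by the fact that that count only shrinks as the threshold drops, and reduce the range $1 \leq s \leq e-1$ to \cref{prop-key-inequality}. The only minor difference is in the case $(n,d) = (3,3)$: the paper verifies it by evaluating $2\rho + \Lambda(s)$ directly, while you read it off from the exception clause of \cref{prop-key-inequality}, which is equally valid.
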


\begin{proof}
First, $e - 1 \geq 1$, and $d = m \mid q-1$ gives $e = \frac{nd}{c} \leq \frac{(q-1)n}{c} = M + 1$, so $e - 1 \leq M$. With $m = d$ one has $e_m = e$ and $\Omega = 2e$. Write $n' = \frac{n}{c}$ and $d' = \frac{d}{c}$, and define
\[
\Lambda (k) \; = \; \# \left\{ \, ( \alpha, \beta ) \in \Z_{\geq 1}^2 \; : \; n' \alpha + d' \beta \leq k \, \right\} ,
\]
so that $\Lambda ( e-1 ) = g_\XX$ by \cref{lem-genus-lattice}. In the count $h_3 (s)$ of \cref{prop-abc} the constraints $\alpha \leq m-1$ and $\beta \leq n-1$ are automatic whenever $n' \alpha + d' \beta \leq e-1$, since then $n' \alpha \leq n'd - 1$ and $d' \beta \leq d'n - 1$. As $\min \{ s, 2e-1-s \} \leq e-1$ for every $s$, we have $h_3 (s) = \Lambda ( \min \{ s, 2e-1-s \} )$ throughout.

Let $s \geq e$. Then $\min \{ s, e-1-s \} < 0$, so $h_1 (s) = h_2 (s) = 0$ and $\dim \Hull ( \CC_s ) = \Lambda ( 2e-1-s ) \leq \Lambda ( e-1 ) = g_\XX$, with equality at $s = e$.

Now let $0 \leq s \leq e-1$ and let $\rho = \min \{ s, e-1-s \}$. Then $\min \{ s, 2e-1-s \} = s$, so \cref{prop-abc} gives
\[
\dim \Hull ( \CC_s ) \; = \; \left\lfloor \frac{\rho}{n'} \right\rfloor + \left\lfloor \frac{\rho}{d'} \right\rfloor + \Lambda (s) .
\]
At $s = e-1$ this yields $\rho = 0$ and $\dim \Hull ( \CC_{e-1} ) = \Lambda ( e-1 ) = g_\XX$. Subtracting $\Lambda (s)$, the inequality $\dim \Hull ( \CC_s ) \leq g_\XX$ reads $\lfloor \rho / n' \rfloor + \lfloor \rho / d' \rfloor \leq g_\XX - \Lambda (s) = \Theta$, which is \cref{prop-key-inequality} with $r = e-1-s$. For $(n,d) \neq (3,3)$ that proposition supplies the inequality for every $s$, and the maximum is $g_\XX$. For $(n,d) = (3,3)$ one has $n' = d' = 1$, $e = 3$ and $g_\XX = 1$, and the displayed formula reads $\dim \Hull ( \CC_s ) = 2 \rho + \Lambda (s)$. This equals $2$ at $s = 1$ and is at most $1$ at $s \in \{ 0, 2 \}$, while $\dim \Hull ( \CC_s ) \leq g_\XX = 1$ for $s \geq 3$. Hence the maximum is $g_\XX + 1 = 2$.
\end{proof}

The bound $\dim \Hull ( \CC_s ) \leq (nm-t_2)/2$ of \cref{prop-torsion-bound} holds for every $s$ and is attained in \cref{exa-torsion}. Off the locus $m = d$ it is the only bound available: by \cref{cor-hmax-computable} the maximal hull dimension is the covering number of the family $\{ I_\chi \}$, and not a function of $g_\XX$ and $t_2$. On the locus $m = d$, \cref{thm-hmax} and \cref{cor-epsilon-transitive} bound the excess: for every $s$ with $c \, \nu (s) < N$,
\[
\varepsilon ( G_s, A_s ) \; = \; \dim \Hull ( \CC_s ) - \Phi \left( \nu (s) \right) \; \leq \; g_\XX - \Phi \left( \nu (s) \right) \; \leq \; g_\XX
\]
unless $(n,d) = (3,3)$, with equality throughout when $s \in \{ e-1, e \}$ and $\Phi ( \nu (s) ) = 0$. Like the problems left open in \cref{sec-window}, this bounds the dimension of the space of decomposable elements without identifying them.
 %-----------------------------------------------------------------------------------------
%-----------------------------------------------------------------------------------------
\section{Computations: measuring the excess}\label{sec-comput}

By \cref{thm-quasi} the hull of $\CC_s$ is the lattice count $\Phi ( \nu (s) )$ plus the excess, and the first term depends on $\XX$ only through $(n,d,q)$ and the integer $\delta$. The excess is therefore computable from any table of hull dimensions, by \cref{eq-epsilon-comp}, and it is the only quantity in the family that is not determined by the numerical data. This section computes it along the stratification of the moduli space by automorphism group, using the equations of the strata given in \cite{sa-sh}.

Three things are tested: the exactness of \cref{eq-main-dim} inside the window, the size of the excess outside it, and what the excess sees outside it. For the last, \cref{prop-eps-moduli} exhibits over $\F_7$ a hull profile asymmetric about $M/2$; by \cref{cor-asym} that asymmetry is excess, at two degrees which present the same meet, the same join and the same class of $\Pic ( \XX )$. Which classes the family presents at all is settled first, and the answer restricts what \cref{cor-detect} can say here.

All computations are carried out over prime fields $\F_q$ with $n \mid q-1$, so that \textup{(H1)}--\textup{(H4)} are available. For a given $f$ we verify that $f$ is squarefree with no root in $\F_q$, and take $\YY$ to be the set of all affine $\F_q$-rational points, so that $\delta$, $N = n(q-\delta)$ and $\deg E = n \delta$ are as in \cref{lem-pullback} and $M$, $\mu$, $\nu$, $t$, $W ( \XX )$, $B ( \XX )$ and $\Phi$ are determined by $(n,d,q,\delta)$ alone.

The code $\CC_s$ is generated by the rows of the $| B_s | \times N$ matrix $\left( x(P)^i y(P)^j \right)_{(i,j) \in B_s, \, P \in \YY}$, where $B_s$ is the monomial basis of \cref{lem-bridge}; no normalisation is required, since $z = 1$ at every affine point. If $G_s$ is a row-reduced generator matrix of $\CC_s$ of rank $k_s$, then
\begin{equation}\label{eq-hull-rank}
\dim \Hull ( \CC_s ) = k_s - \operatorname{rank} \left( G_s \, G_s^{\, t} \right) ,
\end{equation}
all ranks being computed by Gaussian elimination over $\F_q$. The excess is then read off from \cref{eq-quasi} as
\begin{equation}\label{eq-epsilon-comp}
\varepsilon ( G_s, A_s ) \; = \; \dim \Hull ( \CC_s ) - \Phi \left( \nu (s) \right) ,
\end{equation}
which is the quantity of interest outside the window, where by \cref{thm-hull} it is the only thing left to compute.

The classes of $\Pic ( \XX )$ the family actually presents are far fewer than \cref{sec-window} allows. Every divisor occurring in \cref{eq-meetjoin} is supported on $\Ram$, $E$ and $D_\infty$, and the first two are already equivalent to multiples of the third.

\begin{lem}\label{lem-classes}
Assume \textup{(H1)}--\textup{(H4)} and put $\gamma = \frac{2 g_\XX - 2}{c}$, an integer because $2 g_\XX - 2 = nd - n - d - c$ by \cref{eq-genus} and $c$ divides both $n$ and $d$. Then, in $\Pic ( \XX )$,
\[
\begin{split}
\Ram \; &\sim \; \frac{d}{c} \, D_\infty , \qquad E \; \sim \; \frac{n \delta}{c} \, D_\infty , \qquad D \; \sim \; \frac{N}{c} \, D_\infty , \qquad K \; \sim \; \gamma \, D_\infty , \\
G_s \wedge A_s \; &\sim \; \nu (s) \, D_\infty , \qquad K - G_s \wedge A_s \; \sim \; \left( \gamma - \nu (s) \right) D_\infty \; = \; \frac{t (s)}{c} \, D_\infty ,
\end{split}
\]
the last two for every $s \geq 1$.
\end{lem}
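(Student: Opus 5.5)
Every claim here follows from a principal divisor already recorded in \cref{lem-basic}, \cref{lem-pullback} or the proof of \cref{prop-dual}. The plan is to list one such identity for each of $\Ram$, $E$, $D$ and $K$, and then to combine them linearly using \cref{eq-meetjoin} and \cref{eq-tofs}.

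First I check that $\gamma$ is an integer. Expanding \cref{eq-genus} gives $2 g_\XX - 2 = (n-1)(d-1) + 1 - c - 2 = nd - n - d - c$. Since $c$ divides $n$ and $d$, every term is divisible by $c$.

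The four basic classes come as follows.
\begin{enumerate}
\item The identity $\dv (y) = \Ram - \tfrac{d}{c} D_\infty$ of \cref{lem-basic} gives $\Ram \sim \tfrac{d}{c} D_\infty$.
\item \cref{lem-pullback} gives $\dv ( e_\Delta (x) ) = E - \tfrac{n\delta}{c} D_\infty$, hence $E \sim \tfrac{n\delta}{c} D_\infty$.
\item The proof of \cref{prop-dual} gives $\dv ( x^q - x ) = D + E - \tfrac{qn}{c} D_\infty$. Subtracting the class of $E$ yields $D \sim \tfrac{n(q-\delta)}{c} D_\infty = \tfrac{N}{c} D_\infty$, using $N = n(q-\delta)$.
\item For $K$, I substitute these classes into \cref{eq-eta}:
\[
K \sim \Big( \tfrac{(n-1)d}{c} - \tfrac{qn}{c} + \tfrac{(q-1)n}{c} - 1 \Big) D_\infty = \tfrac{(n-1)d - n - c}{c} D_\infty = \gamma D_\infty .
\]
This is the computation already carried out in the proof of \cref{thm-quasi}(2), so I would simply cite it.
\end{enumerate}

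By \cref{lem-meetjoin}, $G_s \wedge A_s = \mu(s) D_\infty - E$. Its class is therefore $( \mu(s) - \tfrac{n\delta}{c} ) D_\infty = \nu(s) D_\infty$ by \cref{def-nu}. Equivalently, multiplication by $e_\Delta(x)$ realises this, as in \cref{lem-shift}.

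It follows that $K - G_s \wedge A_s \sim ( \gamma - \nu(s) ) D_\infty$. The coefficient is $\gamma - \nu(s) = \tfrac{2g_\XX - 2 - c\mu(s) + n\delta}{c}$, and this equals $t(s)/c$ by \cref{eq-tofs}. That gives the final equality, which is an equality of divisors.

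I expect no step to be a real obstacle. The only point needing care is the bookkeeping of the $D_\infty$ coefficients in $K$, that is, the cancellation $-\tfrac{qn}{c} + \tfrac{(q-1)n}{c} = -\tfrac{n}{c}$. Beyond that, one must note that every coefficient written down is an integer, which holds because $c \mid n$ and $c \mid d$.
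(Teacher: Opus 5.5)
Your proposal is correct and follows essentially the same route as the paper: the principal divisors $\dv(y)$, $\dv(e_\Delta(x))$ and $\dv(x^q - x)$ give the classes of $\Ram$, $E$ and $D$, substituting into \cref{eq-eta} gives $K \sim \gamma D_\infty$, and \cref{eq-meetjoin} together with \cref{def-nu} and \cref{eq-tofs} gives the last two equivalences. The only cosmetic difference is in the step for $K$: you substitute $D + E \sim \tfrac{qn}{c} D_\infty$ in one go, or cite the identical computation in the proof of \cref{thm-quasi}(2), whereas the paper substitutes the classes of $D$ and $E$ separately.
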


\begin{proof}
The first equivalence is \cref{eq-divisors} and the second is \cref{lem-pullback}. For the third, the proof of \cref{prop-dual} gives $\dv ( x^q - x ) = D + E - \frac{qn}{c} D_\infty$, so that $D \sim \frac{qn}{c} D_\infty - E \sim \frac{n (q - \delta)}{c} D_\infty = \frac{N}{c} D_\infty$ by \cref{lem-pullback}. For the fourth, \cref{eq-eta} exhibits $K$ as $(n-1) \Ram - D - E + M D_\infty$, which is therefore equivalent to $\lambda D_\infty$ with
\[
\lambda \; = \; \frac{(n-1)d}{c} - \frac{n (q - \delta)}{c} - \frac{n \delta}{c} + \frac{(q-1)n}{c} - 1 \; = \; \frac{(n-1)d - n}{c} - 1 \; = \; \frac{nd - n - d - c}{c} \; = \; \gamma .
\]
By \cref{eq-meetjoin} and \cref{def-nu} we have $G_s \wedge A_s = \mu (s) D_\infty - E \sim \left( \mu (s) - \frac{n \delta}{c} \right) D_\infty = \nu (s) D_\infty$, and subtracting gives the last equivalence, of degree $2 g_\XX - 2 - c \nu (s) = t (s)$ by \cref{eq-tofs}.
\end{proof}

\begin{cor}\label{cor-vacuous}
Assume \textup{(H1)}--\textup{(H4)} and let $s$ satisfy $t (s) \geq 0$. Then $K - G_s \wedge A_s$ is linearly equivalent to the effective divisor $\frac{t(s)}{c} D_\infty$, and its class lies in the cyclic subgroup of $\Pic ( \XX )$ generated by $[ D_\infty ]$, where it is determined by $(n,d,q,\delta)$ and $s$ alone. In particular the conclusion of \cref{cor-detect} holds on every curve satisfying \textup{(H1)}--\textup{(H4)}, at every such $s$ and whatever the value of $\varepsilon$; and when $t (s) = 0$ one has
\[
G_s \vee A_s \; \sim \; D
\]
on every such curve.
\end{cor}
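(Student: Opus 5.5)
The plan is to read everything off \cref{lem-classes}, which already gives $K - G_s \wedge A_s \sim \frac{t(s)}{c} D_\infty$. First I will check that $\frac{t(s)}{c}$ is a nonnegative integer. By \cref{eq-tofs} and \cref{lem-classes}, $t(s) = c(\gamma - \nu(s))$. Here $\gamma = \frac{2g_\XX - 2}{c}$ is an integer by the remark in \cref{lem-classes}, and $\nu(s)$ is an integer because $c \mid n$, as noted after \cref{def-nu}. Hence $\frac{t(s)}{c} = \gamma - \nu(s) \in \Z$, and the hypothesis $t(s) \geq 0$ makes it nonnegative. Since $D_\infty$ is effective, $\frac{t(s)}{c} D_\infty$ is an effective divisor of degree $t(s)$, and it is linearly equivalent to $K - G_s \wedge A_s$.

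Second, I will locate the class. By the displayed equivalence, $[K - G_s \wedge A_s] = (\gamma - \nu(s))[D_\infty]$, so it lies in the cyclic subgroup generated by $[D_\infty]$. The multiplier $\gamma - \nu(s)$ is computed from \cref{eq-genus}, $\mu(s)$ and $M$. These depend only on $(n,d,q)$ and $s$, together with $\delta$, so the class is determined by $(n,d,q,\delta)$ and $s$.

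Third, the conclusion of \cref{cor-detect} is exactly that the class of $K - G_s \wedge A_s$ lies in $W_{t(s)}(\XX)$, i.e. that it is represented by an effective divisor of degree $t(s)$ defined over $\F_q$. The first step supplies such a divisor unconditionally, namely $\frac{t(s)}{c}D_\infty$, which is $\F_q$-rational because $D_\infty$ is a Frobenius-stable fibre. So the conclusion holds regardless of the value of $\varepsilon$.

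Finally, when $t(s) = 0$ the effective divisor is $0$, so $K \sim G_s \wedge A_s$. By \cref{eq-join-canonical}, $G_s \vee A_s - D = K - G_s \wedge A_s \sim 0$, which gives $G_s \vee A_s \sim D$.

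No step is a real obstacle. The only point needing care is the integrality of $t(s)/c$, since without it the expression $\frac{t(s)}{c}D_\infty$ would not be a divisor, and the integrality of $\gamma$ and $\nu(s)$ settles it.
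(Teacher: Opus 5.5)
Your proposal is correct and follows essentially the same route as the paper. The paper also reads the equivalence $K - G_s \wedge A_s \sim \frac{t(s)}{c} D_\infty$ off \cref{lem-classes}, notes that this divisor is effective when $t(s) \geq 0$, and gets $G_s \vee A_s \sim D$ in the case $t(s) = 0$ (that is, $\nu(s) = \gamma$) from \cref{eq-join-canonical}. Your explicit checks that $t(s)/c = \gamma - \nu(s)$ is an integer and that $D_\infty$ is $\F_q$-rational only spell out what the paper leaves implicit.
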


\begin{proof}
Immediate from \cref{lem-classes}, the divisor $\frac{t(s)}{c} D_\infty$ being effective when $t(s) \geq 0$. The last assertion is the case $\nu (s) = \gamma$, together with $G_s \vee A_s - D = K - G_s \wedge A_s$ from \cref{eq-join-canonical}.
\end{proof}

This is a limitation of the family, not of \cref{sec-window}: the condition that \cref{cor-detect} imposes on the class of $K - G \wedge A$ is met automatically here, because the canonical class and the class of $D$ are both multiples of $[ D_\infty ]$, so the effectivity that a nonvanishing excess certifies is certified already by $\dv (y)$ and $\dv ( x^q - x )$. The excess therefore carries information only through the failure of the converse: a generator of $L ( K - G \wedge A )$ must in addition be decomposable along $G$ and $A$. \cref{lem-classes} also identifies the containing space, making the bound \cref{eq-eps-h1} explicit.

\begin{cor}\label{cor-eps-bound}
Assume \textup{(H1)}--\textup{(H4)}. Then for every $s \geq 1$
\[
\varepsilon ( G_s, A_s ) \; \leq \; \ell \left( \left( \gamma - \nu (s) \right) D_\infty \right) \; = \; \Phi \left( \gamma - \nu (s) \right) ,
\]
and consequently, for every $s$ with $c \, \nu (s) < N$,
\[
\dim \Hull ( \CC_s ) \; \leq \; \Phi ( \nu (s) ) + \Phi \left( \gamma - \nu (s) \right) \; = \; 2 \, \Phi ( \nu (s) ) - c \, \nu (s) - 1 + g_\XX .
\]
\end{cor}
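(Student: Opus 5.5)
The plan is to chain three results already in hand: \cref{thm-canonical}, \cref{lem-classes} and \cref{thm-quasi}. Linear equivalence and Riemann-Roch then turn the abstract bound into the lattice counts.

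For the first inequality, I would start from \cref{eq-eps-h1} in \cref{thm-canonical}(2). That result applies because \cref{prop-dual} identifies $A_s$ with $D - G_s + K$ for $K = \dv(\eta)$, and it gives $\varepsilon(G_s,A_s) \leq \ell(K - G_s \wedge A_s)$. By \cref{lem-classes}, $K - G_s \wedge A_s \sim (\gamma - \nu(s)) D_\infty$. Linearly equivalent divisors have isomorphic Riemann-Roch spaces: if $D_1 = D_2 + \dv(h)$, multiplication by $h$ carries $L(D_1)$ onto $L(D_2)$. So $\ell(K - G_s \wedge A_s) = \ell((\gamma - \nu(s)) D_\infty)$. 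This equals $\Phi(\gamma - \nu(s))$ by the remark after \cref{eq-Phi} that $\Phi(k) = \ell(k D_\infty)$ for every integer $k$, including negative ones. Hence the first displayed bound holds for every $s \geq 1$, with no restriction on $c\,\nu(s)$.

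For the second assertion I would assume $c\,\nu(s) < N$, which is the hypothesis of \cref{thm-quasi}. Then \cref{eq-quasi} gives $\dim \Hull(\CC_s) = \Phi(\nu(s)) + \varepsilon(G_s,A_s)$. Inserting the first bound yields $\dim \Hull(\CC_s) \leq \Phi(\nu(s)) + \Phi(\gamma - \nu(s))$.

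The closing equality is Riemann-Roch for the divisor $\nu(s) D_\infty$, using the canonical class $K \sim \gamma D_\infty$ from \cref{lem-classes}:
\[
\ell(\nu(s) D_\infty) - \ell((\gamma - \nu(s)) D_\infty) = c\,\nu(s) + 1 - g_\XX .
\]
Rewriting both terms via $\Phi$ gives $\Phi(\gamma - \nu(s)) = \Phi(\nu(s)) - c\,\nu(s) - 1 + g_\XX$. Substituting this into the previous bound produces $2\,\Phi(\nu(s)) - c\,\nu(s) - 1 + g_\XX$.

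None of these steps is a real obstacle. The only point needing care is that Riemann-Roch and the identity $\Phi = \ell(\,\cdot\, D_\infty)$ are valid for all integers, including negative $\nu(s)$ or negative $\gamma - \nu(s)$. This holds: Riemann-Roch is unconditional, and \cref{eq-Phi} was checked to vanish exactly where $\ell$ does.
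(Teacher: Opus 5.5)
Your proposal is correct and follows essentially the same route as the paper: the first bound comes from \cref{eq-eps-h1}, with the class of $K - G_s \wedge A_s$ computed in \cref{lem-classes} and the identity $\Phi = \ell(\,\cdot\, D_\infty)$; the second combines \cref{eq-quasi} with Riemann-Roch for $\nu(s) D_\infty$, using $K \sim \gamma D_\infty$. Your remarks on the invariance of $\ell$ under linear equivalence, and on the validity of the argument for negative arguments of $\Phi$, only spell out what the paper's proof leaves implicit.
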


\begin{proof}
The first assertion is \cref{eq-eps-h1} together with \cref{lem-classes} and \cref{eq-ell}, the dimension of a Riemann-Roch space depending only on the class of the divisor. The second follows from \cref{eq-quasi}, and the closing identity is Riemann-Roch for $\nu (s) D_\infty$, whose index of speciality is $\ell ( K - \nu (s) D_\infty ) = \Phi ( \gamma - \nu (s) )$ by \cref{lem-classes}.
\end{proof}

\cref{prop-dichotomy} shows that the degenerate zone (Z0) is nonempty only on the totally split locus, and that off that locus the divisorial criterion for self-orthogonality fails at every $s$. The next theorem shows that off that locus self-orthogonality itself fails, and does so for every $s$ at once.

\begin{thm}\label{thm-so}
Assume \textup{(H1)}--\textup{(H4)} and let $1 \leq s \leq M$. If $\CC_s$ is self-orthogonal, then
\begin{equation}\label{eq-power-sums}
\sum_{a \in \Delta} a^{\, i} \; = \; 0 \quad \text{in } \F_q, \qquad \text{for every } 0 \leq i \leq \left\lfloor \frac{cs}{n} \right\rfloor .
\end{equation}
In particular $p \mid \delta$.
\end{thm}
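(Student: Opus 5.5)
The plan is to test self-orthogonality on just two codewords of $\CC_s$: the evaluation of the constant function $1$ and the evaluation of a pure power $x^i$. Their inner product is a power sum over the affine rational points. We then rewrite that sum over $\F_q\setminus\Delta$ and complement it to a sum over $\Delta$, using the vanishing of complete power sums over $\F_q$.

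First, fix $i$ with $0 \leq i \leq \lfloor cs/n \rfloor$. Then $\tfrac{n}{c} i \leq s$, so $x^i \in B_s$ by \cref{lem-bridge}. Since $s \geq 1 > 0$, also $1 = x^0 y^0 \in B_s$. By \cref{eq-identification}, both $\ev(1)$ and $\ev(x^i)$ lie in $\CC_s$. If $\CC_s$ is self-orthogonal, their standard inner product vanishes:
\[
0 \;=\; \sum_{P \in \YY} x(P)^i .
\]

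Second, evaluate this sum with \cref{lem-pullback}. The points of $\YY$ are exactly the $n$ rational points over each $a \in \F_q \setminus \Delta$, and all $n$ of them have $x$-coordinate $a$. Hence
\[
\sum_{P \in \YY} x(P)^i \;=\; n \sum_{a \in \F_q \setminus \Delta} a^i ,
\]
where we use the convention $0^0 = 1$ in the case $i = 0$. Since $p \nmid n$ by (H1), $n$ is invertible in $\F_q$, and therefore
\[
\sum_{a \notin \Delta} a^i = 0 .
\]

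Third, recall the classical fact that $\sum_{a \in \F_q} a^i = 0$ for $0 \leq i < q-1$. For $i = 0$ the sum is $q = 0$. For $1 \leq i < q-1$, choose $\lambda$ with $\lambda^i \neq 1$; substituting $a \mapsto \lambda a$ multiplies the sum by $\lambda^i$, so the sum must vanish. We must check that our range of $i$ stays below $q-1$. Since $s \leq M = \tfrac{(q-1)n}{c} - 1$,
\[
i \;\leq\; \frac{cs}{n} \;\leq\; (q-1) - \frac{c}{n} \;<\; q-1 .
\]
Subtracting the sum over $\F_q \setminus \Delta$ from the sum over $\F_q$ then gives \cref{eq-power-sums}.

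Finally, taking $i = 0$ gives $\delta \cdot 1 = 0$ in $\F_q$, that is, $p \mid \delta$. There is no real obstacle here. The only points needing care are the bookkeeping of the range of $i$, which must stay strictly below $q-1$ (this is where the hypothesis $s \leq M$ is used), and the convention $0^0 = 1$ when $0 \in \Delta$ or $0 \notin \Delta$.
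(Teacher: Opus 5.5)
Your proof is correct. It differs from the paper's only in how the vanishing inner product $\sum_{P \in \YY} x(P)^i = 0$ is turned into a statement about $\Delta$.

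The paper applies the residue theorem to the differential $\omega = x^i \eta$, with $\eta = -dx/(x^q - x)$ as in \cref{prop-dual}. The hypothesis $s \leq M$ gives $\dv(\omega) \geq -D - E$, so $\omega$ has only simple poles, on $\Supp D \cup \Supp E$, with residues $x(P)^i$. The residue theorem, with traces at the non-rational places of $E$, then reads $\sum_{P \in \Supp D} x(P)^i + n \sum_{a \in \Delta} a^i = 0$.

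You instead count $\YY$ fibre by fibre and complement against the classical identity $\sum_{a \in \F_q} a^i = 0$ for $0 \leq i < q-1$. Here $s \leq M$ enters exactly as the bound $i < q-1$.

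The two arguments are the same identity in different form. The residue theorem for $x^i\,dx/(x^q - x)$ on $\PP^1$ is precisely this power-sum vanishing. Also, $\omega$ has no pole along $D_\infty$ if and only if $\tfrac{n}{c} i \leq M$, which for an integer $i$ is $i < q-1$.

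What each route buys: yours is more elementary and self-contained, needing neither differentials nor traces over places of higher degree. The paper's keeps the argument inside the duality framework of \cref{prop-dual}, so the role of $\eta$ and of the bound $s \leq M$ is visible geometrically.
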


\begin{proof}
Fix $i$ with $\tfrac{n}{c} i \leq s$ and put $v = x^i$, so that $v \in B_s \subseteq L ( s D_\infty )$ and $1 \in B_s$ as well. Self-orthogonality gives $\ev_s (1) \cdot \ev_s (v) = 0$, that is $\sum_{P \in \Supp D} v (P) = 0$. Consider $\omega = v \, \eta$ with $\eta$ as in \cref{prop-dual}. Since $\dv (v) \geq - s D_\infty$ and $\dv ( \eta )$ is given by \cref{eq-eta}, we get $\dv ( \omega ) \geq (n-1) \Ram - D - E + ( M - s ) D_\infty \geq - D - E$ using $s \leq M$. Hence $\omega$ has at worst simple poles, confined to $\Supp D \cup \Supp E$. If $P \in \Supp D \cup \Supp E$ lies over $a \in \F_q$ then $P$ is unramified by (H1), so $x-a$ is a uniformiser at $P$ and the computation in the proof of \cref{prop-dual} gives $\Res_P ( \eta ) = 1$, whence $\Res_P ( \omega ) = v(P)$. The residue theorem therefore reads
\[
\sum_{P \in \Supp D} v (P) \; + \; \sum_{P \in \Supp E} \operatorname{Tr}_{k(P) / \F_q} \left( v(P) \right) \; = \; 0 ,
\]
the first sum having no traces because those points are rational. The second sum is the sum of $v$ over the geometric points of $E$, which by \cref{lem-pullback} are the pairs $(a,b)$ with $a \in \Delta$ and $b^n = f(a)$; for $v = x^i$ it equals $n \sum_{a \in \Delta} a^i$. As the first sum vanishes and $p \nmid n$ by (H1), we obtain \cref{eq-power-sums}. Taking $i = 0$ gives $p \mid \delta$.
\end{proof}

\begin{cor}\label{cor-so-dichotomy}
Assume \textup{(H1)}--\textup{(H4)}, that $q = p$ is prime, and that $\YY \neq \emptyset$. Then $\CC_s$ is self-orthogonal for some $s \geq 1$ if and only if $\XX$ is totally split.
\end{cor}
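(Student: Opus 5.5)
We prove the two directions separately; the nontrivial one is the forward direction, and there we reduce to \cref{thm-so}.

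\emph{The converse.} Suppose $\XX$ is totally split. Then \cref{prop-dichotomy} shows that $G_s \leq A_s$ for every $s \leq \lfloor M/2 \rfloor$, and hence that $\CC_s$ is self-orthogonal. We only need $\lfloor M/2 \rfloor \geq 1$, that is $M \geq 2$. Here $M = (q-1)n/c - 1$ with $c \leq n$. Since $n \geq 2$ and $n \mid q-1$, we have $q \geq 3$. Then $(q-1)n/c \geq q-1 \geq 2$, which only gives $M \geq 1$. So the small cases need a separate check. If $q = 3$ then $n = 2$. If also $c = 2$, then $M = 1$ and $\lfloor M/2 \rfloor = 0$. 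In that case I would instead check self-orthogonality of $\CC_1$ directly via \cref{prop-degenerate}. Alternatively I would note that $c = 2$ means $d$ is even, and observe that this edge case may need \cref{thm-so}-style residue reasoning. I expect to verify it by hand, or to note that $G_s \leq A_s$ at $s = 1$ needs $M - 2 \geq -1$ together with $(n-1)\Ram \geq 0$. Re-examining: for $E = 0$ we have $A_1 - G_1 = (n-1)\Ram + (M-2)D_\infty$. With $M = 1$ this is not effective, so this case is a genuine boundary case that I would treat by direct computation, since $N = 6$ and the code is small.

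\emph{The forward direction.} Suppose $\CC_s$ is self-orthogonal for some $s \geq 1$. First I restrict to $s \leq M$. For $s > M$, \cref{eq-meetjoin} gives $\mu(s) = M - s < 0$. Self-orthogonality means $\Hull(\CC_s) = \CC_s$, so $\dim \CC_s \leq \dim \CC_s^\perp = N - \dim \CC_s$. On the other hand $\CC_s \supseteq \CC_M$ for $s > M$, because $L(sD_\infty) \supseteq L(MD_\infty)$, and subcodes of self-orthogonal codes are self-orthogonal. So it suffices to show that $\CC_M$ is self-orthogonal only when the curve is totally split. This reduces everything to $1 \leq s \leq M$ without loss.

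\emph{The main step.} Apply \cref{thm-so}: self-orthogonality gives $p \mid \delta$. Since $q = p$, we have $0 \leq \delta \leq q$, so $\delta \in \{0, p\}$. If $\delta = p = q$, then every $a \in \F_q$ lies in $\Delta$. Then $N = n(q - \delta) = 0$, so $\YY = \emptyset$, which is excluded by hypothesis. Hence $\delta = 0$, and $\XX$ is totally split by \cref{def-split}.

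I expect the only real obstacle to be the bookkeeping at the ends of the range. Going up, the reduction from $s > M$ to $s = M$ through subcodes is routine. Going down, the converse needs $\lfloor M/2 \rfloor \geq 1$, and I would settle that by checking the tiny boundary case $q = 3$, $n = c = 2$ directly, since there $\CC_1$ is a small explicit code.
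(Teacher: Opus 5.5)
Your forward direction is the paper's argument. You reduce to $s \le M$ because $\CC_M \subseteq \CC_s$ and self-orthogonality passes to subcodes. You then apply \cref{thm-so} to get $p \mid \delta$, and rule out $\delta = p$ because it forces $N = 0$. The side remarks about $\mu(s) < 0$ and $\dim \CC_s \le N - \dim \CC_s$ play no role and can be dropped.

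The gap is in the converse, and it cannot be closed. You are right that \cref{prop-dichotomy} only helps if $\lfloor M/2 \rfloor \ge 1$, i.e.\ $M \ge 2$. You are also right that $M = 1$ happens exactly when $q = 3$ and $n = c = 2$. The paper's own proof misses this: it asserts the range $s \le \lfloor M/2 \rfloor$ is nonempty ``since $M \geq 1$''.

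However, the direct computation you propose for that case gives the opposite of what you need. There $n/c = 1$, so $x \in L(D_\infty)$ and $\ev(x) \in \CC_1 \subseteq \CC_s$ for every $s \ge 1$. On a totally split curve each $a \in \F_3$ contributes exactly $n$ points to $\YY$, so
\[
\ev(x)\cdot\ev(x) \; = \; \sum_{P \in \YY} x(P)^2 \; = \; n \sum_{a \in \F_3} a^2 \; = \; 2 \cdot 2 \; = \; 1 \; \neq \; 0 \quad \text{in } \F_3 .
\]
Hence no $\CC_s$ is self-orthogonal.

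Such curves exist. Take $y^2 = 2x^4 + x^2 + 1$ over $\F_3$:
\begin{itemize}
\item $f(0) = f(1) = f(2) = 1$, so $f$ has no root and every value is a nonzero square;
\item $f' = 2x(x^2+1)$ shares no root with $f$, so $f$ is separable;
\item $n = 2 \mid q-1$, so \textup{(H1)}--\textup{(H4)} hold with $\delta = 0$, $N = 6$, $c = 2$ and $M = 1$.
\end{itemize}
Concretely, $\CC_1$ is spanned by $(1,1,1,1,1,1)$ and $(0,0,1,1,2,2)$, and the second has self-product $1$.

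So the ``if'' direction is false as stated. It needs the extra hypothesis $M \ge 2$, equivalently $(q,n,c) \neq (3,2,2)$. Under that hypothesis your argument, which is then the paper's, is complete.
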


\begin{proof}
Sufficiency is \cref{prop-dichotomy}, the range $s \leq \lfloor M/2 \rfloor$ being nonempty since $M \geq 1$. Conversely, suppose $\CC_s$ is self-orthogonal for some $s \geq 1$; we claim $\delta = 0$. As $\dim \CC_s$ is nondecreasing in $s$ and self-orthogonality is inherited downwards, we may assume $s \leq M$, and \cref{thm-so} gives $p \mid \delta$. If $\delta \geq 1$ then $\delta = q$, since $\delta \leq q = p$; hence $\Delta = \F_q$ and $N = 0$, contradicting $\YY \neq \emptyset$.
\end{proof}

Over non-prime fields \cref{eq-power-sums} is a genuine additional constraint: it says that $\sum_{a \in \Delta} g (a) = 0$ in $\F_q$ for every polynomial $g \in \F_q [x]$ of degree at most $\lfloor cs/n \rfloor$.

The first computations are across the strata of a genus two locus. Let $n = 2$ and $d = 6$, so that $c = 2$, $\bw = (1,1,3)$, $g_\XX = 2$ and $M = q-2$. Representatives of the strata of the corresponding locus are
\begin{equation}\label{eq-strata}
\begin{split}
\Aut ( \XX ) &= \Z_2 : \quad y^2 = f(x) \quad \text{generic}, \\
\Aut ( \XX ) &= V_4 : \quad y^2 = x^6 + a x^4 + b x^2 + 1, \qquad x \mapsto -x, \\
\Aut ( \XX ) &= D_8 : \quad y^2 = x^6 + a x^4 + a x^2 + 1, \qquad x \mapsto -x, \; x \mapsto 1/x, \\
\Aut ( \XX ) &= D_{12} : \quad y^2 = x^6 + a x^3 + 1, \qquad x \mapsto \zeta_3 x, \; x \mapsto 1/x, \\
\Aut ( \XX ) &= 2 D_{12} : \quad y^2 = x^6 + 1 .
\end{split}
\end{equation}
The $D_8$ stratum is the locus on which $f$ is self-reciprocal, $x^6 f (1/x) = f(x)$, studied from the point of view of self-inversive polynomials in \cite{sh-si}; the $D_{12}$ stratum is the locus on which $f$ is a polynomial in $x^3$ up to the reciprocal normalisation.

For $n = 2$, $d = 6$ the criterion \cref{eq-window-nonempty} reads $2 \lfloor M/2 \rfloor - 2 \delta > 2$, that is
\begin{equation}\label{eq-window-genus2}
\delta \; \leq \; \left\lfloor \frac{q-2}{2} \right\rfloor - 2 ,
\end{equation}
and by \cref{prop-window} the detection band is $\{ s : \mu (s) = \delta + 1 \}$, nonempty precisely when $\delta \leq \lfloor M/2 \rfloor - 1$. \cref{tab-strata} records, for $q = 11$ and $q = 19$ and one curve in each stratum, the invariants $N$, $\delta$ and $m(f)$, the largest value of the lattice count, and the largest hull dimension $h_{\max} = \max_s \dim \Hull ( \CC_s )$.

\begin{table}[ht]
\caption{Hull data across the strata of the genus two locus, $n=2$, $d=6$. The window and the detection band are empty in every line, so every degree lies in the saturated zone \textup{(Z3)}.}\label{tab-strata}
\begin{tabular}{llrrrrr}
\toprule
$\Aut ( \XX )$ & equation & $N$ & $\delta$ & $m(f)$ & $\max_s \Phi ( \nu (s) )$ & $h_{\max}$ \\
\midrule
\multicolumn{7}{l}{$q = 11$, \; $M = 9$, \; window nonempty iff $\delta \leq 2$, \; band nonempty iff $\delta \leq 3$} \\
$\Z_2$    & $y^2 = x^6 + 2x^3 + x + 1$    & $12$ & $5$ & $1$ & $0$ & $1$ \\
$V_4$     & $y^2 = x^6 + x^2 + 1$         & $14$ & $4$ & $2$ & $1$ & $1$ \\
$D_8$     & $y^2 = x^6 + x^4 + x^2 + 1$   & $6$  & $8$ & $2$ & $0$ & $0$ \\
$D_{12}$  & $y^2 = x^6 + x^3 + 1$         & $10$ & $6$ & $1$ & $0$ & $1$ \\
$2D_{12}$ & $y^2 = x^6 + 1$               & $10$ & $6$ & $2$ & $0$ & $1$ \\
\midrule
\multicolumn{7}{l}{$q = 19$, \; $M = 17$, \; window nonempty iff $\delta \leq 6$, \; band nonempty iff $\delta \leq 7$} \\
$\Z_2$    & $y^2 = x^6 + x + 1$           & $16$ & $11$ & $1$ & $0$ & $1$ \\
$V_4$     & $y^2 = x^6 + x^2 + 1$         & $18$ & $10$ & $2$ & $0$ & $1$ \\
$D_8$     & $y^2 = x^6 + x^4 + x^2 + 1$   & $22$ & $8$  & $2$ & $1$ & $1$ \\
$D_{12}$  & $y^2 = x^6 + 3x^3 + 1$        & $8$  & $15$ & $3$ & $0$ & $1$ \\
$2D_{12}$ & $y^2 = x^6 + 1$               & $2$  & $18$ & $6$ & $0$ & $0$ \\
\bottomrule
\end{tabular}
\end{table}

Three features of \cref{tab-strata} are accounted for by the theory. First, every degree lies in the saturated zone: since $\mu (s) \leq \lfloor M/2 \rfloor$ and $\delta \geq \lfloor M/2 \rfloor$ in all ten lines, \cref{eq-tofs} gives $t (s) = 2 - 2 \mu (s) + 2 \delta \geq 2 = g_\XX$ throughout. By \cref{cor-vacuous} the effectivity condition of \cref{cor-detect} is therefore vacuous, and what the excess carries on these strata is its value, not its vanishing. On the eight lines with $\delta > \lfloor M/2 \rfloor$ one has $\nu (s) < 0$ for every $s$, so $\Phi \circ \nu$ vanishes identically and $\dim \Hull ( \CC_s ) = \varepsilon ( G_s, A_s )$ for every $s$: the entire hull is excess, which is why the values are small and supported at isolated degrees. On the two remaining lines, $V_4$ at $q=11$ and $D_8$ at $q=19$, one has $\delta = \lfloor M/2 \rfloor$, hence $\nu (s) = 0$ at the peak and $\Phi = 1$ there, and $h_{\max} = 1$ is already accounted for by the lattice count.

Second, none of these codes is self-orthogonal for any $s \geq 1$. All the computations are over prime fields and none of the curves is totally split, so \cref{cor-so-dichotomy} applies in every line, on every stratum and for every $q$. Correspondingly the codes are \textup{LCD} for all but at most four values of $s$, and the family attached to a curve with $E \neq 0$ consists almost entirely of \textup{LCD} codes; this is the phenomenon underlying the constructions of \cite{ca-lo-ma, me-ta-qi}.

Third, the column $m(f)$ measures the part of the reduced automorphism group that survives over $\F_q$ and fixes both $0$ and $\infty$, which by \cref{prop-stabilizer} is the whole of the group acting on the codes. The geometric reduced group of the $D_{12}$ and $2 D_{12}$ strata contains $\zeta_3$ and $\zeta_6$ respectively, but $3 \nmid 10$, so over $\F_{11}$ these do not descend and $m(f) = 1$ and $2$; over $\F_{19}$ one has $6 \mid 18$ and $m(f) = 3$ and $6$. Thus $H = \Z_2 \times \Z_{m(f)}$ jumps from order $2$ to order $12$ between the two rows for $y^2 = x^6+1$ without any change in the curve, purely arithmetically. In the line $2 D_{12}$ at $q=19$ one has $|H| = 12$ and $N = 2$, so $|H| \nmid N$ and the action of $H$ on $\YY$ is not free; this is the failure of the hypothesis $g (0) \notin ( \F_q^* )^n$ of \cref{prop-transitive-criterion}, since $f(0) = 1$ is a square in $\F_{19}$, so the two points of $\YY$ lie over $x=0$ and $\sigma_\lambda$ fixes them both.

The hull profile is not a function of the numerical data.

\begin{exa}\label{exa-sameN}
Over $\F_{11}$ the curves $y^2 = x^6 + x^3 + 1$ and $y^2 = x^6 + 1$, lying in the strata with automorphism groups $D_{12}$ and $2 D_{12}$, both have $N = 10$, $\delta = 6$ and identical dimension sequences $\dim \CC_s = 2,3,5,7,8,9,10, \dots$ for $s = 1, 2, \dots$. Their hull sequences are
\[
\begin{split}
y^2 = x^6 + x^3 + 1 : \quad & 0,\, 1,\, 0,\, 0,\, 0,\, 0,\, 0, \dots \\
y^2 = x^6 + 1 : \quad & 0,\, 0,\, 0,\, 0,\, 1,\, 1,\, 0, \dots
\end{split}
\]
so that $\dim \Hull ( \CC_s )$ is determined neither by $(n,d,q,N)$ nor by $(n,d,q,\delta)$. Since $\delta = 6 > \lfloor M/2 \rfloor = 4$ both sequences are pure excess, taking the value $1$ at $s=2$ for the first curve and at $s = 5,6$ for the second. Both profiles are asymmetric about $M/2 = 4.5$, as \cref{prop-asym} requires of any profile with nonvanishing excess. The two curves are distinguished by $m(f) = 1$ and $m(f) = 2$, so the example does not exclude the finer invariant $(n,d,q,\delta,m(f))$; what it shows is that the excess, and with it the moduli point, is not a function of the numerical data entering \cref{thm-hull}.
\end{exa}

We now examine the totally split curves of \cref{def-split}, for which $\delta = 0$ and $\nu (s) = \mu (s)$. Such curves exist but are rare. Over $\F_7$ with $n=2$, $d=6$ an exhaustive search over the $3^7$ admissible value vectors produces several, for instance
\[
\XX_7 : \quad y^2 = 6x^6 + x^5 + 6x^4 + x^3 + 6x^2 + x + 1, \qquad N = 14 = 7 \cdot 2 ,
\]
while over $\F_{11}$ a random search over value vectors produced
\begin{equation}\label{eq-x11}
\XX_{11} : \quad y^2 = 5x^6 + 9x^5 + 6x^4 + 4x^3 + 7x^2 + 4x + 3, \qquad N = 22 = 11 \cdot 2 .
\end{equation}
Both have $m(f) = 1$, since each involves $x$ to the first power, so $H = \langle \tau \rangle$. The leading coefficient in \cref{eq-x11} is a square in $\F_{11}$, so both points of $D_\infty$ are rational and $\# \XX_{11} ( \F_{11} ) = 24$, which is the Serre bound $11 + 1 + 2 \lfloor 2 \sqrt{11} \rfloor$ for a genus two curve over $\F_{11}$; the curve is optimal. A totally split curve has the largest possible number of affine rational points, so the totally split locus lies inside the locus of curves with many rational points, and for small $q$ it consists of curves at or near the Serre bound.

For both curves $t (s) = 2 - 2 \mu (s)$, so the window is $\{ s : \mu (s) \geq 2 \}$ and the detection band is $\{ s : \mu (s) = 1 \}$. Thus $W ( \XX_7 ) = \{ 2, 3 \}$ and $B ( \XX_7 ) = \{ 1, 4 \}$, while $W ( \XX_{11} ) = \{ 2, \dots , 7 \}$ and $B ( \XX_{11} ) = \{ 1, 8 \}$.

\begin{table}[ht]
\caption{The totally split curves $\XX_7$ and $\XX_{11}$: computed hull dimensions against \cref{thm-quasi}. Degrees in the detection band are those with $t(s) = 0$.}\label{tab-split}
\begin{tabular}{lrrrrrrrrrrr}
\toprule
\multicolumn{12}{l}{$\XX_7$, \; $M = 5$, \; $W = \{ 2,3 \}$, \; $B = \{ 1,4 \}$} \\
$s$ & $1$ & $2$ & $3$ & $4$ & $5$ & $6$ & $7$ & $8$ & & & \\
\midrule
$\mu (s) = \nu (s)$        & $1$ & $2$ & $2$ & $1$ & $0$  & $-1$ & $-2$ & $-3$ & & & \\
$t (s)$                    & $0$ & $-2$ & $-2$ & $0$ & $2$ & $4$ & $6$ & $8$ & & & \\
$\Phi ( \nu (s) )$         & $2$ & $3$ & $3$ & $2$ & $1$ & $0$ & $0$ & $0$ & & & \\
$\dim \Hull ( \CC_s )$     & $2$ & $3$ & $3$ & $3$ & $3$ & $2$ & $1$ & $0$ & & & \\
$\varepsilon ( G_s, A_s )$ & $0$ & $0$ & $0$ & $1$ & $2$ & $2$ & $1$ & $0$ & & & \\
\midrule
\multicolumn{12}{l}{$\XX_{11}$, \; $M = 9$, \; $W = \{ 2, \dots , 7 \}$, \; $B = \{ 1,8 \}$} \\
$s$ & $1$ & $2$ & $3$ & $4$ & $5$ & $6$ & $7$ & $8$ & $9$ & $10$ & $11$ \\
\midrule
$\mu (s) = \nu (s)$        & $1$ & $2$ & $3$ & $4$ & $4$ & $3$ & $2$ & $1$ & $0$ & $-1$ & $-2$ \\
$t (s)$                    & $0$ & $-2$ & $-4$ & $-6$ & $-6$ & $-4$ & $-2$ & $0$ & $2$ & $4$ & $6$ \\
$\Phi ( \nu (s) )$         & $2$ & $3$ & $5$ & $7$ & $7$ & $5$ & $3$ & $2$ & $1$ & $0$ & $0$ \\
$\dim \Hull ( \CC_s )$     & $2$ & $3$ & $5$ & $7$ & $7$ & $5$ & $3$ & $2$ & $1$ & $0$ & $0$ \\
$\varepsilon ( G_s, A_s )$ & $0$ & $0$ & $0$ & $0$ & $0$ & $0$ & $0$ & $0$ & $0$ & $0$ & $0$ \\
\bottomrule
\end{tabular}
\end{table}

\begin{prop}\label{prop-eps-moduli}
For the curve $\XX_7$ the degrees $s = 1$ and $s = 4$ satisfy
\[
G_1 \wedge A_1 = G_4 \wedge A_4 = D_\infty , \qquad G_1 \vee A_1 = G_4 \vee A_4 = \Ram + 4 D_\infty , \qquad t (1) = t (4) = 0 ,
\]
while $\varepsilon ( G_1, A_1 ) = 0$ and $\varepsilon ( G_4, A_4 ) = 1$. Hence the excess is a function neither of the pair $( G_s \wedge A_s, \, G_s \vee A_s )$ nor of the class $[ K - G_s \wedge A_s ] \in \Pic^0 ( \XX_7 )$, both of which the two degrees present identically.
\end{prop}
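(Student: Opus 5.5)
The divisorial identities come first and need only the general formulas already established. For $\XX_7$ we have $n=2$, $d=6$, $c=2$, $q=7$, hence $M = (q-1)n/c - 1 = 5$, $g_\XX = 2$, and $\delta = 0$ because the curve is totally split, so $E = 0$. By \cref{lem-meetjoin},
\[
G_s \wedge A_s = \mu(s) D_\infty , \qquad G_s \vee A_s = \Ram + \max\{s, 5-s\} D_\infty .
\]
At $s=1$ and $s=4$ we have $\mu = 1$ and $\max = 4$, which gives the two displayed equalities. \cref{prop-asym} also gives them directly, since $4 = M-1$. Then \cref{eq-tofs} gives $t(1) = t(4) = 2\cdot 2 - 2 - 2\cdot 1 + 0 = 0$.

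Next we compute the two excesses. At $s=1 \leq \lfloor M/2 \rfloor = 2$, \cref{prop-dichotomy} gives $G_1 \leq A_1$, so $\varepsilon(G_1,A_1) = 0$ with no computation. For $s=4$ I plan to use \cref{cor-asym}:
\[
\varepsilon(G_4,A_4) = \dim \Hull(\CC_4) - \dim \Hull(\CC_1) .
\]
Since $\CC_1$ is self-orthogonal, $\dim \Hull(\CC_1) = \dim \CC_1 = \ell(D_\infty) = \Phi(1) = 2$, using \cref{eq-ell} and $N = 14 > 2$. It then remains to show $\dim \Hull(\CC_4) = 3$. Here $\CC_4 = C_L(D, 4D_\infty)$ has dimension $\ell(4D_\infty) = 8 + 1 - 2 = 7$ by Riemann--Roch, as $8 > 2g_\XX - 2$ and $8 < N$. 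I would write down the $7 \times 14$ generator matrix with rows $1, x, x^2, x^3, x^4, y, xy$ (the set $B_4$) evaluated at the $14$ affine points, and apply \cref{eq-hull-rank}, checking that $\operatorname{rank}(G_4 G_4^t) = 4$. This is a finite computation over $\F_7$.

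To cut down that computation, I would use the $\tau$-eigenspace splitting. Because $\sum_P y(P)\,(\text{even in } y)$ vanishes under $y \mapsto -y$ on each fibre, the Gram matrix is block diagonal, with one block on $\{1, x, \dots, x^4\}$ and one on $\{y, xy\}$. The $x$-block has entries $2\sum_{a\in\F_7} a^{i+j}$, and these vanish unless $6 \mid i+j$ with $i+j > 0$, where the value is $2 \cdot 6 = -2$. Its nonzero entries therefore sit at $(i,j) \in \{(2,4),(3,3),(4,2)\}$, which gives rank $3$. The $y$-block has entries $2\sum_a a^{i+j} f(a)$ for $i,j \in \{0,1\}$, and these depend only on the coefficients of $x^{6}, x^{5}, x^{4}$ in $f$: the $(0,0)$ entry is $2\cdot(-1)\cdot 6$, the off-diagonal entries are $2\cdot(-1)\cdot 1$ (from $x^5$), and the $(1,1)$ entry is $2\cdot(-1)\cdot 6$ (from $x^4$). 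In $\F_7$ these are $2$, $5$, $5$, $2$. The determinant is $4 - 25 = -21 \equiv 0$, so this block has rank $1$. The total rank is $4$, so the hull has dimension $7 - 4 = 3$ and $\varepsilon(G_4,A_4) = 1$.

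The final assertion is formal. Both degrees present the same meet and join, and by \cref{lem-classes} the same class $[K - G_s \wedge A_s] = [(t(s)/c) D_\infty] = 0$, yet the excesses differ. The main obstacle is the rank computation, specifically the evaluation of the small $y$-block. I would double-check the power sums $\sum_{a\in\F_7} a^k = -1$ for $6 \mid k$, $k > 0$, and $0$ otherwise. The conclusion depends on the fact that the determinant vanishes, which is what produces exactly one extra hull vector.
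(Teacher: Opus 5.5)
Your proposal is correct and follows the paper's proof step for step. The meet and join come from \cref{lem-meetjoin} or \cref{prop-asym}, $\varepsilon(G_1,A_1)=0$ comes from \cref{prop-dichotomy}, $\varepsilon(G_4,A_4)$ comes from \cref{cor-asym}, and the triviality of the common class from \cref{lem-classes}. The only difference is that the paper reads $\dim\Hull(\CC_4)=3$ and $\dim\Hull(\CC_1)=2$ off the machine computation of \cref{tab-split}, whereas you verify them by hand. Your hand computation checks out: the $x$-block has rank $3$, and the $y$-block $\left(\begin{smallmatrix} 2 & 5 \\ 5 & 2 \end{smallmatrix}\right)$ is nonzero and singular over $\F_7$. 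This makes the proposition independent of the table.
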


\begin{proof}
The three equalities are \cref{prop-asym} with $M = 5$, together with \cref{eq-meetjoin} and $\mu (1) = \mu (4) = 1$, $\max \{ 1, 4 \} = 4$ and $E = 0$. Since $\XX_7$ is totally split and $1 \leq \lfloor M/2 \rfloor = 2$, \cref{prop-dichotomy} gives $\varepsilon ( G_1, A_1 ) = 0$, and \cref{cor-asym} then gives $\varepsilon ( G_4, A_4 ) = \dim \Hull ( \CC_4 ) - \dim \Hull ( \CC_1 ) = 3 - 2 = 1$, in agreement with \cref{tab-split}. The common class is trivial by \cref{cor-vacuous}, since $t (1) = t (4) = 0$.
\end{proof}

\cref{cor-detect} attaches to the nonvanishing $\varepsilon ( G_4, A_4 ) = 1$ the equivalence $\Ram + 4 D_\infty \sim D$ in $\Pic ( \XX_7 )$. The equivalence holds, but it is not detected: by \cref{cor-vacuous} it holds on every curve satisfying \textup{(H1)}--\textup{(H4)} at a degree with $t (s) = 0$, being for $\XX_7$ the combination $\Ram \sim 3 D_\infty$, $D \sim 7 D_\infty$ of $\dv (y)$ and $\dv ( x^7 - x )$. On $\XX_{11}$ the corresponding degrees are $s = 1$ and $s = 8$, the class they present is trivial by \cref{lem-classes}, and the excess vanishes at both; the profile of \cref{tab-split} is symmetric about $M/2 = 4.5$, as \cref{prop-asym} requires whenever the excess vanishes identically. The asymmetry of the profile of $\XX_7$ is therefore a certificate not of a linear equivalence but of the decomposability of a generator of $L ( K - G_s \wedge A_s )$ along the two divisors, obtainable from \cref{eq-hull-rank} alone; a symmetric profile certifies nothing.

The remaining degrees of $\XX_7$ lie in the saturated zone and show how far the excess is from its bound \cref{eq-eps-h1}, which by \cref{cor-eps-bound} is here $\Phi ( \gamma - \nu (s) )$ with $\gamma = 1$. At $s = 5$ one has $G_5 \wedge A_5 = 0$, so $K - G_5 \wedge A_5 = K$ and $\ell (K) = g_\XX = 2$; the computed $\varepsilon = 2$ says that the whole canonical space is decomposable. At $s = 7$ one has $K - G_7 \wedge A_7 = K + 2 D_\infty$ of degree $6$, so $\ell = 5$, while $\varepsilon = 1$: only a line of a five-dimensional space is decomposable. Determining which is the content of the problem left open in \cref{sec-window}.

Where the totally split curves sit inside the family is shown by a scan. For $q = 11$, $n = 2$, $d = 6$ we sampled squarefree polynomials $f$ of degree $6$ with no root in $\F_{11}$ and computed $h_{\max}$ for each curve; \cref{tab-scan} records the results. By \cref{eq-window-genus2} the window is nonempty exactly for $\delta \leq 2$, that is for $N \geq 18$, and there \cref{thm-hull} gives
\begin{equation}\label{eq-hmax-genus2}
\max_{s \in W ( \XX )} \dim \Hull ( \CC_s ) \; = \; c \left\lfloor M/2 \right\rfloor - n \delta + 1 - g_\XX \; = \; 7 - 2 \delta .
\end{equation}

\begin{table}[ht]
\caption{Maximal hull dimension against the number of affine rational points, $q = 11$, $n=2$, $d=6$. The gap between the last two rows is a lower bound for $\max_s \varepsilon ( G_s, A_s )$.}\label{tab-scan}
\begin{tabular}{lrrrrrrrrrrr}
\toprule
$N$ & $2$ & $4$ & $6$ & $8$ & $10$ & $12$ & $14$ & $16$ & $18$ & $20$ & $22$ \\
$\delta$ & $10$ & $9$ & $8$ & $7$ & $6$ & $5$ & $4$ & $3$ & $2$ & $1$ & $0$ \\
\midrule
curves sampled & $1$ & $7$ & $32$ & $74$ & $99$ & $84$ & $60$ & $39$ & $10$ & $1$ & --- \\
$\max_s \Phi ( \nu (s) )$ & $0$ & $0$ & $0$ & $0$ & $0$ & $0$ & $1$ & $2$ & $3$ & $5$ & $7$ \\
$h_{\max}$ & $0$ & $0$ & $1$ & $1$ & $2$ & $2$ & $2$ & $3$ & $3$ & $5$ & $7$ \\
\bottomrule
\end{tabular}
\end{table}

For $N \geq 18$ the window is nonempty and the two lower rows agree, the computed maximum $3, 5, 7$ being exactly \cref{eq-hmax-genus2}; the last column is the curve $\XX_{11}$ of \cref{eq-x11}, and no totally split curve occurred in the random sample, which is consistent with the rarity of such curves. For $N \leq 16$ the window is empty and the two rows separate, the gap being carried by the excess. The maximal hull dimension is non-decreasing in $N$ over the whole range and grows sharply as $\delta$ approaches zero, from $2$ at the generic value $\delta = 5$ to $7$ at $\delta = 0$. The lower bound implicit in \cref{eq-hmax-genus2} is a theorem.

\begin{prop}\label{prop-monotone-bound}
Fix $(n,d,q)$ and let $\XX$ satisfy \textup{(H1)}--\textup{(H4)}. If $W ( \XX ) \neq \emptyset$ then
\[
h_{\max} ( \XX ) \; \geq \; c \left\lfloor M/2 \right\rfloor - n \delta + 1 - g_\XX ,
\]
and the right hand side is strictly decreasing in $\delta$, hence strictly increasing in $N$. The window is nonempty for every $\XX$ with $n \delta < c \lfloor M/2 \rfloor - 2 g_\XX + 2$, in particular for every totally split curve with $n (q-1) > 2 (n-1)(d-1)$.
\end{prop}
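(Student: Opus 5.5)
The plan is to evaluate the formula of \cref{thm-hull} at the peak of the tent function $\mu$, and then to read the nonemptiness criterion directly off \cref{prop-window}(2).

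For the lower bound, assume $W ( \XX ) \neq \emptyset$. By \cref{prop-window}(1) the window is the symmetric interval $\{ s : \mu_0 \leq s \leq M - \mu_0 \}$. Being nonempty, it contains $s_0 = \lfloor M/2 \rfloor$: nonemptiness means $\mu_0 \leq \lfloor M/2 \rfloor$, and $\lfloor M/2 \rfloor \leq M - \mu_0$ because $M - \lfloor M/2 \rfloor = \lceil M/2 \rceil \geq \lfloor M/2 \rfloor \geq \mu_0$. In particular $s_0 \geq \mu_0 \geq 1$, since $\mu_0 \geq 1$ follows from $g_\XX \geq 1$ in \cref{lem-standing}. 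At $s_0$ we have $\mu ( s_0 ) = \lfloor M/2 \rfloor$, and \cref{eq-main-dim} gives $\dim \Hull ( \CC_{s_0} ) = c \lfloor M/2 \rfloor - n \delta + 1 - g_\XX$. Hence $h_{\max} ( \XX )$ is at least this value.

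Monotonicity is immediate. With $(n,d,q)$ fixed, the quantities $c$, $M$ and $g_\XX$ are fixed by \cref{eq-genus}, so the right hand side is affine in $\delta$ with slope $-n < 0$. Since $N = n ( q - \delta )$ by \cref{lem-pullback}, the map $N \mapsto \delta$ is strictly decreasing, and so the bound is strictly increasing in $N$.

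For nonemptiness, the condition $n \delta < c \lfloor M/2 \rfloor - 2 g_\XX + 2$ is a rearrangement of \cref{eq-window-nonempty}, so \cref{prop-window}(2) applies verbatim. For a totally split curve we have $\delta = 0$, and it suffices to check $c \lfloor M/2 \rfloor > 2 g_\XX - 2$. I would first use $c \lfloor M/2 \rfloor \geq c ( M - 1 ) / 2 = \left( ( q - 1 ) n - 2c \right) / 2$. Next, \cref{eq-genus} gives $2 g_\XX - 2 = ( n - 1 )( d - 1 ) - 1 - c$. The required inequality then follows from
\[
( q - 1 ) n - 2c \; > \; 2 ( n - 1 )( d - 1 ) - 2 - 2c ,
\]
that is, from $n ( q - 1 ) > 2 ( n - 1 )( d - 1 ) - 2$. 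This is implied, with room to spare, by the stated hypothesis $n ( q - 1 ) > 2 ( n - 1 )( d - 1 )$.

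There is no real obstacle here. The only point needing care is the parity of $M$ in bounding $\lfloor M/2 \rfloor$ from below, and the estimate $\lfloor M/2 \rfloor \geq ( M - 1 ) / 2$ handles both parities at once.
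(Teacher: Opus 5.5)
Your proposal is correct and follows the paper's own route: you evaluate \cref{thm-hull} at $s = \lfloor M/2 \rfloor$, which lies in the window by \cref{prop-window}, read monotonicity off $N = n(q-\delta)$, and reduce the totally split case to \cref{eq-window-nonempty} via $\lfloor M/2 \rfloor \geq (M-1)/2$. Your remark that the resulting condition $n(q-1) > 2(n-1)(d-1) - 2$ is strictly weaker than the stated hypothesis is slightly more accurate than the paper's phrasing, which calls it ``the stated inequality''.
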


\begin{proof}
The inequality is \cref{thm-hull} evaluated at $s = \lfloor M/2 \rfloor$, which lies in the window whenever the window is nonempty by \cref{prop-window}, together with $N = n (q - \delta)$. The criterion for nonemptiness is \cref{eq-window-nonempty}. For the last assertion put $\delta = 0$: one has $c \lfloor M/2 \rfloor \geq c ( M-1 )/2 = ( n (q-1) - 2c )/2$ and $2 g_\XX - 2 = (n-1)(d-1) - 1 - c$, so the criterion is implied by $n(q-1)/2 - c > (n-1)(d-1) - 1 - c$, which is the stated inequality.
\end{proof}

That $h_{\max} \geq c \lfloor M/2 \rfloor + 1 - g_\XX$ on the totally split locus is \cref{prop-monotone-bound}. The reverse inequality is the assertion that no curve with $\delta > 0$ reaches that value through the excess, and it is not automatic: \cref{thm-quasi} bounds the excess only by $\ell ( K - G_s \wedge A_s )$, which is not small in the saturated zone, and \cref{sec-equiv} exhibits a second mechanism, operating precisely where the window is empty, which produces hulls of dimension $g_\XX$ on curves with $\delta > 0$. It is a theorem under one explicit inequality separating the two, and \cref{exa-star-sharp,exa-star-fails} show that some such hypothesis is needed.

Write
\[
h^{*} \; = \; c \left\lfloor M/2 \right\rfloor + 1 - g_\XX
\]
for the value in question. Two bounds carry the proof, one on each side of the midpoint, and only the second uses the hypothesis: below $M/2$ the hull is bounded by the code itself, hence by a Riemann-Roch space monotone in $s$ which saturates at $s = \lfloor M/2 \rfloor$; above $M/2$ it is bounded by the dual code, whose divisor loses degree as $s$ grows.

\begin{lem}\label{lem-half-degree}
Assume \textup{(H1)}--\textup{(H4)} and $c \lfloor M/2 \rfloor > 2 g_\XX - 2$. Then $\ell ( \lfloor M/2 \rfloor D_\infty ) = h^{*}$, and for every $s$ with $1 \leq s \leq \lfloor M/2 \rfloor$,
\[
\dim \Hull ( \CC_s ) \; \leq \; \dim \CC_s \; \leq \; \ell ( s D_\infty ) \; \leq \; h^{*} .
\]
If $\dim \Hull ( \CC_s ) = h^{*}$ for such an $s$, then $\CC_s$ is self-orthogonal.
\end{lem}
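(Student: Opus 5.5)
The first assertion is Riemann-Roch for $\lfloor M/2 \rfloor D_\infty$. That divisor has degree $c \lfloor M/2 \rfloor$, which by hypothesis exceeds $2 g_\XX - 2$, so it is non-special and
\[
\ell ( \lfloor M/2 \rfloor D_\infty ) = c \lfloor M/2 \rfloor + 1 - g_\XX = h^{*}.
\]
Equivalently, by \cref{thm-quasi}(2) with $\delta = 0$ this is $\Phi ( \lfloor M/2 \rfloor )$, but the direct Riemann-Roch argument suffices.

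For the chain of inequalities, fix $1 \leq s \leq \lfloor M/2 \rfloor$. The first inequality is trivial, since $\Hull ( \CC_s ) \subseteq \CC_s$. The second holds because $\CC_s = \ev_s ( L ( s D_\infty ) )$ by \cref{eq-identification}, so its dimension is $\ell ( s D_\infty ) - \ell ( s D_\infty - D ) \leq \ell ( s D_\infty )$. The third follows from $s D_\infty \leq \lfloor M/2 \rfloor D_\infty$, which gives $L ( s D_\infty ) \subseteq L ( \lfloor M/2 \rfloor D_\infty )$, together with the first assertion. Nothing here uses (H1)--(H4) beyond the identification of $\CC_s$ with $C_L ( D, s D_\infty )$.

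For the last assertion, suppose $\dim \Hull ( \CC_s ) = h^{*}$. Then every inequality in the chain is an equality; in particular $\dim \Hull ( \CC_s ) = \dim \CC_s$. Since $\Hull ( \CC_s ) \subseteq \CC_s$ and the dimensions agree, $\Hull ( \CC_s ) = \CC_s$, which is self-orthogonality by \cref{def-hull}. There is no real obstacle. The only point needing care is that the equality $\ell ( \lfloor M/2 \rfloor D_\infty ) = h^{*}$ requires non-speciality, and the lemma's hypothesis provides exactly that. As a byproduct, equality also forces $\ell ( s D_\infty - D ) = 0$ and $\ell ( s D_\infty ) = \ell ( \lfloor M/2 \rfloor D_\infty )$, but these are not needed for the statement.
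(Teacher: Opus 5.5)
Your proof is correct and follows the paper's argument step for step: Riemann-Roch for the non-special divisor $\lfloor M/2 \rfloor D_\infty$, the three inequalities from $\Hull ( \CC_s ) \subseteq \CC_s$, $\dim \CC_s = \ell ( s D_\infty ) - \ell ( s D_\infty - D )$ and $s D_\infty \leq \lfloor M/2 \rfloor D_\infty$, and equality of the outer terms forcing $\Hull ( \CC_s ) = \CC_s$. The parenthetical appeal to \cref{thm-quasi}(2) ``with $\delta = 0$'' is unnecessary, and $\delta$ need not vanish under this lemma's hypotheses. It does no harm, however, because you rest the claim on the direct Riemann-Roch argument.
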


\begin{proof}
The divisor $\lfloor M/2 \rfloor D_\infty$ has degree $c \lfloor M/2 \rfloor > 2 g_\XX - 2$, so it is non-special and $\ell ( \lfloor M/2 \rfloor D_\infty ) = h^{*}$. The first inequality is $\Hull ( \CC_s ) \subseteq \CC_s$, the second is $\dim \CC_s = \ell ( s D_\infty ) - \ell ( s D_\infty - D )$, and the third is $s D_\infty \leq \lfloor M/2 \rfloor D_\infty$. If the outer terms agree then all three are equalities, so $\dim \Hull ( \CC_s ) = \dim \CC_s$ and the containment $\Hull ( \CC_s ) \subseteq \CC_s$ is an equality, that is $\CC_s \subseteq \CC_s^\perp$.
\end{proof}

\begin{thm}\label{tthm-split-max}
Assume \textup{(H1)}--\textup{(H4)}, that $q = p$ is prime, that $\YY \neq \emptyset$, and that
\begin{equation}\label{eq-star}
c \left\lfloor M/2 \right\rfloor \; \geq \; (n-1) d .
\end{equation}
Then
\[
h_{\max} ( \XX ) \; \leq \; h^{*} \; = \; c \left\lfloor M/2 \right\rfloor + 1 - g_\XX \; = \; \ell \left( \left\lfloor M/2 \right\rfloor D_\infty \right) ,
\]
with equality if and only if $\XX$ is totally split, in which case the value is attained at $s = \lfloor M/2 \rfloor$ and at $s = M - \lfloor M/2 \rfloor$; and if $\XX$ is not totally split then $h_{\max} ( \XX ) \leq h^{*} - 1$. For fixed $(n,d,q)$ subject to \cref{eq-star} the function $h_{\max}$ therefore attains its maximum exactly on the totally split locus, where it equals $\ell ( \lfloor M/2 \rfloor D_\infty )$.
\end{thm}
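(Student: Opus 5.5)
The plan is to split the range of degrees at the midpoint and bound the hull separately on each side, as the remark before \cref{lem-half-degree} suggests. First I will check that \cref{eq-star} implies the hypothesis of \cref{lem-half-degree}. By \cref{eq-genus} one has $2 g_\XX - 2 = nd - n - d - c$, so $(n-1)d - (2 g_\XX - 2) = n + c > 0$, and \cref{eq-star} gives $c \lfloor M/2 \rfloor > 2 g_\XX - 2$. Then \cref{lem-half-degree} yields $h^{*} = \ell ( \lfloor M/2 \rfloor D_\infty )$, which is the last equality in the statement. It also yields $\dim \Hull ( \CC_s ) \leq h^{*}$ for $1 \leq s \leq \lfloor M/2 \rfloor$, with equality only if $\CC_s$ is self-orthogonal. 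Since $q = p$ and $\YY \neq \emptyset$, \cref{cor-so-dichotomy} says that self-orthogonality forces $\XX$ to be totally split. So on the lower half the hull is at most $h^{*}$ always, and at most $h^{*} - 1$ off the totally split locus.

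For $s \geq M - \lfloor M/2 \rfloor$, so that $\mu (s) = M - s \leq \lfloor M/2 \rfloor$, I will split according to whether $s$ lies in the window. If $s \in W ( \XX )$, then \cref{thm-hull} gives $\dim \Hull ( \CC_s ) = c \mu (s) - n \delta + 1 - g_\XX \leq h^{*} - n \delta$, which is at most $h^{*} - 1$ when $\delta > 0$. If $s \notin W ( \XX )$, so that $t (s) \geq 0$ and $\deg ( G_s \wedge A_s ) \leq 2 g_\XX - 2$, there are two subcases.

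If $\deg ( G_s \wedge A_s ) \geq 0$, \cref{cor-clifford}(2) gives $\dim \Hull ( \CC_s ) \leq g_\XX + 1$. Using \cref{eq-star} and the identity above, $h^{*} - 1 = c \lfloor M/2 \rfloor - g_\XX \geq (n-1)d - g_\XX = g_\XX + n + c - 2 \geq g_\XX + 1$.

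If $\deg ( G_s \wedge A_s ) < 0$, that is $c \mu (s) < n \delta$, I will not try to control the excess directly. Instead I use $\Hull ( \CC_s ) \subseteq \CC_s^\perp = C_L ( D, A_s )$, so $\dim \Hull ( \CC_s ) \leq \ell ( A_s )$. By \cref{eq-degA}, $\deg A_s = (n-1) d - n \delta + c \mu (s) \leq (n-1)d - 1$. If $\deg A_s > 2 g_\XX - 2$, Riemann-Roch gives $\ell ( A_s ) \leq (n-1)d - g_\XX \leq h^{*} - 1$ by \cref{eq-star}. Otherwise Clifford's theorem, or $\ell = 0$ in negative degree, gives $\ell ( A_s ) \leq g_\XX \leq h^{*} - 1$. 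For $s \leq M$ this covers every upper degree. Degrees beyond $M$ fall into the same subcase or have $\deg A_s < 0$, and I will check this with the same degree bookkeeping.

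Combining the two halves gives $h_{\max} \leq h^{*}$ always, and $h_{\max} \leq h^{*} - 1$ when $\XX$ is not totally split. Conversely, if $\XX$ is totally split then $\delta = 0$, and \cref{eq-star} gives \cref{eq-window-nonempty}, so $s = \lfloor M/2 \rfloor$ and $s = M - \lfloor M/2 \rfloor$ lie in $W ( \XX )$ by \cref{prop-window}. There \cref{thm-hull} gives exactly $h^{*}$, since $\mu = \lfloor M/2 \rfloor$ at both degrees. This settles the equality case and the final sentence of the statement.

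I expect the main obstacle to be the upper half outside the window. There the excess is only bounded by $\ell ( K - G_s \wedge A_s )$, which can be large in the saturated zone. The key choice is to abandon the meet/excess decomposition in that regime and bound the hull by the dual code, whose divisor $A_s$ has degree below $(n-1)d$ exactly when the meet has negative degree. This is the one place where \cref{eq-star} is genuinely used rather than only for nonemptiness of the window. The step needing the most care is the boundary bookkeeping: $t(s) = 0$, the transition at $M/2$ when $M$ is odd, and degrees $s > M$.
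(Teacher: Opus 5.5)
Your proposal is correct and follows the paper's proof essentially step for step. It uses the same split at the midpoint, and on the lower half the same combination of \cref{lem-half-degree} with the self-orthogonality obstruction: you route it through \cref{cor-so-dichotomy}, while the paper applies \cref{thm-so} directly using $1 \leq \delta \leq p-1$. On the upper half it uses the same three cases, including the switch to the bound $\dim \Hull ( \CC_s ) \leq \ell ( A_s )$ when the meet has negative degree. The degrees $s > M$ need no separate bookkeeping: there $\mu (s) = M - s < 0$, which places them automatically in that last case with $\deg A_s \leq (n-1)d - 1$.
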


\begin{proof}
Since $2 g_\XX - 2 = nd - n - d - c$ by \cref{eq-genus}, the hypothesis \cref{eq-star} reads $c \lfloor M/2 \rfloor \geq 2 g_\XX + n + c - 2$, that is
\[
h^{*} \; \geq \; g_\XX + n + c - 1 ,
\]
and in particular $c \lfloor M/2 \rfloor > 2 g_\XX - 2$ and $\lfloor M/2 \rfloor \geq 1$, so \cref{lem-half-degree} is available. Write $h (s) = \dim \Hull ( \CC_s )$.

Let first $1 \leq s \leq \lfloor M/2 \rfloor$. \cref{lem-half-degree} gives $h (s) \leq h^{*}$. If $\delta \geq 1$ then equality is impossible: it would make $\CC_s$ self-orthogonal, and \cref{thm-so}, applicable because $s \leq M$, would give $p \mid \delta$, whereas $1 \leq \delta \leq p-1$ since $N = n (q - \delta) \geq 1$. Hence $h (s) \leq h^{*} - 1$ for every such $s$ when $\delta \geq 1$.

Let now $s > \lfloor M/2 \rfloor$, so that $\mu (s) = M - s$ and, by \cref{eq-degA},
\[
\deg A_s \; = \; (n-1) d - n \delta + c \, \mu (s) .
\]
We distinguish three cases according to $\deg ( G_s \wedge A_s ) = c \, \mu (s) - n \delta$, given by \cref{eq-degrees}.

If $c \, \mu (s) - n \delta > 2 g_\XX - 2$ then $t (s) < 0$ by \cref{eq-tofs}, so $s \in W ( \XX )$ and \cref{thm-hull} gives $h (s) = c \, \mu (s) - n \delta + 1 - g_\XX \leq h^{*} - n \delta$, because $\mu (s) \leq \lfloor M/2 \rfloor$.

If $0 \leq c \, \mu (s) - n \delta \leq 2 g_\XX - 2$ then \cref{cor-clifford} gives $h (s) \leq g_\XX + 1 \leq h^{*} - ( n + c - 2 ) \leq h^{*} - 1$, since $n \geq 2$ and $c \geq 1$.

If $c \, \mu (s) - n \delta < 0$ then the display above gives $\deg A_s \leq (n-1) d - 1$, and therefore
\[
\ell ( A_s ) \; \leq \; \max \left\{ \, g_\XX, \; (n-1) d - g_\XX \, \right\} \; = \; (n-1) d - g_\XX :
\]
for $\deg A_s < 0$ the space vanishes, for $0 \leq \deg A_s \leq 2 g_\XX - 2$ Clifford's theorem gives $\ell ( A_s ) \leq \frac{1}{2} \deg A_s + 1 \leq g_\XX$, and for $\deg A_s > 2 g_\XX - 2$ Riemann-Roch gives $\ell ( A_s ) = \deg A_s + 1 - g_\XX \leq (n-1) d - g_\XX$, while $(n-1) d \geq 2 g_\XX$ is the inequality $n + c \geq 2$. Since $\Hull ( \CC_s ) \subseteq \CC_s^\perp = C_L ( D, A_s )$ by \cref{prop-dual}, and since $(n-1) d - g_\XX = g_\XX + n + c - 2$ by \cref{eq-genus}, we get
\[
h (s) \; \leq \; \ell ( A_s ) - \ell ( A_s - D ) \; \leq \; g_\XX + n + c - 2 \; \leq \; h^{*} - 1 .
\]

Collecting the four cases, $h (s) \leq h^{*}$ for every $s \geq 1$, and $h (s) \leq h^{*} - 1$ for every $s \geq 1$ as soon as $\delta \geq 1$. Finally let $\delta = 0$. Then $c \lfloor M/2 \rfloor - n \delta > 2 g_\XX - 2$, so $W ( \XX ) \neq \emptyset$ by \cref{eq-window-nonempty} and $\lfloor M/2 \rfloor \in W ( \XX )$ by \cref{prop-window}, as does $M - \lfloor M/2 \rfloor$, both having $\mu (s) = \lfloor M/2 \rfloor$; \cref{thm-hull} evaluates the hull there as $c \lfloor M/2 \rfloor + 1 - g_\XX = h^{*}$.
\end{proof}

\begin{rem}\label{rem-star}
By \cref{eq-genus} the hypothesis \cref{eq-star} has the three equivalent forms $c \lfloor M/2 \rfloor \geq (n-1) d$, $c \lfloor M/2 \rfloor \geq 2 g_\XX + n + c - 2$ and $h^{*} \geq g_\XX + n + c - 1$. Since $c \lfloor M/2 \rfloor \geq ( cM - c )/2 = \left( n (q-1) - 2c \right)/2$, it is implied by
\[
n (q-1) \; \geq \; 2 \left( (n-1) d + c \right) ,
\]
so it holds for every $q$ large with respect to $d$, and it is only slightly stronger than the criterion $n (q-1) > 2 (n-1)(d-1)$ of \cref{prop-monotone-bound} for the window of a totally split curve to be nonempty. For $n = 2$, $d = 6$ it reads $q \geq 9$, so it holds for the fields of \cref{tab-strata,tab-scan} and for $\XX_{11}$, and fails for $\XX_7$; for $n = 3$, $d = 4$ it reads $q \geq 7$, so it holds in \cref{tab-picard}. The third form says what \cref{eq-star} excludes. On a stratum with a transitive action the window is empty by \cref{prop-window-empty}, and when $m = d$ the hull reaches $g_\XX$ by \cref{thm-hmax}; \cref{eq-star} forces $h^{*} > g_\XX$, so that mechanism cannot overtake the one of \cref{sec-hull}.
\end{rem}

\begin{exa}\label{exa-star-sharp}
Let $q = 7$, $n = 2$ and $f (x) = x^6 + x^2 + 6$, so that $d = 6$, $c = 2$, $g_\XX = 2$, $M = 5$ and $h^{*} = 3$. For $a \neq 0$ one has $a^6 = 1$ and hence $f (a) = a^2$, a nonzero square, while $f (0) = 6$ is not a square; so $f$ is separable with no root in $\F_7$, \textup{(H1)}--\textup{(H4)} hold, $\Delta = \{ 0 \}$, $\delta = 1$ and $N = 12$. The profile computed from \cref{eq-hull-rank} is
\[
\dim \Hull ( \CC_s ) \; = \; 1, \, 2, \, 3, \, 3, \, 2, \, 1, \, 0, \, 0, \, \dots \qquad s = 1, 2, \dots ,
\]
so $h_{\max} = 3 = h^{*}$ although $\delta = 1$. The window is empty, $\nu (s) = \mu (s) - 1$, and \cref{eq-epsilon-comp} gives $\varepsilon = 1$ at $s = 3$ and $\varepsilon = 2$ at $s = 4$, where $\Phi ( \nu (s) ) = 2$ and $1$: the maximum is reached entirely through the excess, and at both degrees the excess equals its bound $\Phi ( \gamma - \nu (s) )$ of \cref{cor-eps-bound}. Here $c \lfloor M/2 \rfloor = 4 < 6 = (n-1) d$, so \cref{eq-star} fails, and so does the second half of \cref{tthm-split-max}: the maximum is attained off the totally split locus as well as on it. It is not exceeded, and since $t (3) = 0$ the degree $s = 3$ lies in the detection band, where by \cref{cor-vacuous} the equivalence certified is one that holds on every curve of the family.
\end{exa}

\begin{exa}\label{exa-star-fails}
Let $q = 11$, $n = 5$ and $f (x) = x^{10} + 9$, so that $d = 10$, $c = 5$, $g_\XX = 16$, $M = 9$ and $h^{*} = 5$. The fifth powers in $\F_{11}^*$ are $\{ 1, 10 \}$; for $a \neq 0$ one has $a^{10} = 1$ and $f (a) = 10$, a fifth power, while $f (0) = 9$ is not, so \textup{(H1)}--\textup{(H4)} hold with $\Delta = \{ 0 \}$, $\delta = 1$ and $N = 50$. Here $m (f) = 10 = d$ and the criterion of \cref{prop-transitive-criterion} is met with $m = 10$, so \cref{thm-hmax} applies and gives $h_{\max} = g_\XX = 16$, attained at $s = e-1 = 9$ and $s = e = 10$. Thus $h_{\max} = 16 > 5 = h^{*}$ and the inequality of \cref{tthm-split-max} itself fails, \cref{eq-star} failing by a wide margin since $c \lfloor M/2 \rfloor = 20 < 40 = (n-1) d$. The window is empty, as \cref{prop-window-empty} requires of a transitive stratum, and the large hull is produced by an unsymmetric defining set and not by split fibres: it is the second mechanism, on a curve on which the first is switched off.
\end{exa}

The same behaviour persists off the hyperelliptic locus. For $n = 3$ and $d = 4$ one has $c = 1$, $\bw = (1,3,4)$, $g_\XX = 3$, and $\XX$ is a Picard curve; the divisor at infinity is a single rational point and $M = 3(q-1) - 1$. Over $\F_{13}$ one has $M = 35$, $\delta = 13 - N/3$, the window is nonempty precisely when $\delta \leq 4$, that is $N \geq 27$, and by \cref{prop-window} the detection band consists of $g_\XX = 3$ degrees on each side of the window, nonempty precisely when $\delta \leq 5$. \cref{tab-picard} gives $h_{\max}$ for a random sample of squarefree $f$ with no root in $\F_{13}$, computed over $1 \leq s \leq 19$.

\begin{table}[ht]
\caption{Maximal hull dimension for Picard curves over $\F_{13}$, $n=3$, $d=4$.}\label{tab-picard}
\begin{tabular}{lrrrrrrrrr}
\toprule
$N$ & $3$ & $6$ & $9$ & $12$ & $15$ & $18$ & $21$ & $24$ & $27$ \\
$\delta$ & $12$ & $11$ & $10$ & $9$ & $8$ & $7$ & $6$ & $5$ & $4$ \\
$\max_s \Phi ( \nu (s) )$ & $0$ & $0$ & $0$ & $0$ & $0$ & $0$ & $0$ & $1$ & $3$ \\
$h_{\max}$ & $1$ & $2$ & $2$ & $3$ & $3$ & $4$ & $3$ & $5$ & $4$ \\
\bottomrule
\end{tabular}
\end{table}

Only the last column has a nonempty window, namely $W ( \XX ) = \{ 17, 18 \}$, where \cref{prop-monotone-bound} guarantees $\lfloor M/2 \rfloor - 3 \delta + 1 - g_\XX = 3$; the computed $h_{\max} = 4$ exceeds it, so the excess is nonzero at some $s \leq 19$ outside the window. Two cautions apply to the remaining columns. The window being empty there, \cref{eq-main-dim} predicts nothing and the gap between the last two rows is pure excess, so the non-monotonicity between $N = 24$ and $N = 27$ carries no information about $h_{\max}$ as a function of the curve; and the range $1 \leq s \leq 19$ covers only $s \leq M/2 = 17.5$, so by \cref{prop-asym} the symmetric upper tail, which is where an asymmetry would be visible, is not sampled at all.

The section closes with what the computations say for two applications, the equivalence problem and the codes built from the family. The cost of the support splitting algorithm on a code with hull of dimension $h$ grows like $q^h$, and the algorithm separates coordinates only up to the orbits of $\PAut$ \cite{se-ssa, se-hull}, so the pair $( \dim \Hull ( \CC_s ), \Aut ( \XX ) )$ controls the difficulty of deciding equivalence in this family. The hard instances lie on two disjoint loci. On the totally split locus the window is nonempty and by \cref{thm-hull} the hull is nonzero on an interval of degrees with dimension growing linearly in $\mu (s)$, reaching $c \lfloor M/2 \rfloor + 1 - g_\XX$; for $\XX_{11}$ at $s = 4$ one has $h = 7$ and $q^h > 1.9 \cdot 10^7$, against $q^h \leq 11$ for every curve of \cref{tab-strata}. On a stratum satisfying \cref{prop-transitive-criterion} the window is empty and the hull is instead given by \cref{thm-transitive-hull}, nonzero over an entire interval whenever it is nonzero at all and bounded by $(nm-t_2)/2$. Such a stratum is never totally split, so the two sources are supported on disjoint loci, and both are thin in the moduli space; this is the geometric counterpart of the fact that a random code has hull of bounded dimension \cite{se-hull}. Everywhere else, by \cref{tab-strata} and \cref{cor-so-dichotomy}, no code of the family is self-orthogonal at any $s$ and the codes are \textup{LCD} for all but at most four degrees.

If $C$ is an $[N,k]_q$ code with $\dim \Hull (C) = h$, the entanglement-assisted quantum code obtained from $C$ has parameters $[[ N, \, 2k - N + h, \, \geq d ( C^\perp ) ; \, h ]]_q$ \cite{gu-ji-gu}. For $s$ in the window all three of $k$, $h$ and the designed distance of the dual are known in closed form, by \cref{thm-hull} and \cref{cor-distances}: there $s D_\infty$ is non-special and $cs < N$, so $k = cs + 1 - g_\XX$ and the construction returns
\[
\left[ \left[ \, N, \; 2cs + c \, \mu (s) - n \delta - N + 3 - 3 g_\XX , \; \geq cs - 2 g_\XX + 2 \, ; \; c \, \mu (s) - n \delta + 1 - g_\XX \, \right] \right]_q .
\]
The entanglement cost is prescribed by $s$ and by the number of rational points, and ranges over an arithmetic progression of common difference $c$ as $s$ runs through the window. The dimension is positive only in the upper half, where $\mu (s) = M - s$; in the lower half, on a totally split curve, $\CC_s$ is self-orthogonal by \cref{prop-dichotomy} and the appropriate construction is the CSS one, returning $[[ N, N - 2k, \geq cs - 2 g_\XX + 2 ]]_q$ with no entanglement. This is the regime in which quantum codes from superelliptic curves have been built \cite{el-sh}, and by \cref{prop-degenerate} it is the degenerate zone: whichever of the two divisors dominates, the excess vanishes and the hull carries no information about the moduli point. The entanglement-assisted construction instead tolerates a nonzero hull, and the degrees at which it does so are exactly those at which the excess is visible. For $\XX_{11}$ the two ranges give, at $s = 4$ and $s = 6$, the codes $[[22, 8, \geq 6]]_{11}$ and $[[22, 5, \geq 10; 5]]_{11}$. On the transitive strata the cost is instead $|T_s| - |T_s \cap ( - T_s )|$, so codes of prescribed entanglement cost are produced by choosing the defining set, subject to \cref{prop-torsion-bound}, and by \cref{cor-hmax-computable} the largest cost available on such a stratum depends only on $(n,d,m)$.

The \textup{LCD} members are the $\CC_s$ with $\Hull ( \CC_s ) = 0$, and by \cref{cor-thresholds} these are all $s \geq s^+$ together with the degrees outside the window at which the excess vanishes. Those with $s \geq s^+$ are degenerate, since there $\CC_s^\perp = 0$ and $d ( \CC_s ) = 1$; the useful ones are those with $s < s^+$, which by \cref{tab-strata} is all but at most four degrees on a curve with empty window, and for them \cref{cor-distances} supplies $d ( \CC_s ) \geq N - cs$ and $d ( \CC_s^\perp ) \geq cs - 2 g_\XX + 2$. As these have constant sum $N - 2 g_\XX + 2$, a curve of small genus with many rational points is the favourable case and $s$ selects the trade-off. \textup{LCD} codes over $\F_q$ are of interest as countermeasures to side channel and fault injection attacks, where the relevant parameter is exactly this pair \cite{ca-gu}.

Finally, since $\Aut ( \XX )$ acts monomially on the weighted forms, the appropriate question for this family is that of linear rather than permutation equivalence, and $\dim \Hull$ is not preserved by monomial equivalence when $q \geq 4$ \cite{ca-me-ta-qi-pe}. What is preserved is enough to recover the arithmetic of the curve.

\begin{prop}\label{prop-perm-invariant}
Let $\XX, \XX'$ be superelliptic curves of the same type $(n,d)$ over the same $\F_q$, both satisfying \textup{(H1)}--\textup{(H4)} and both with nonempty window. If $\CC_s ( \XX, \YY )$ and $\CC_s ( \XX', \YY' )$ are permutation equivalent for some degree $s \in W ( \XX ) \cap W ( \XX' )$, then $\delta = \delta'$; equivalently $\# \XX ( \F_q )$ and $\# \XX' ( \F_q )$ have the same affine part.
\end{prop}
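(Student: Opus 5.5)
Permutation equivalence preserves the length and the hull dimension, so the plan is to extract $\delta$ from one of these two invariants.

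The length alone already suffices. Permutation equivalent codes have the same length. The length of $\CC_s(\XX,\YY)$ is $N = n(q-\delta)$ by \cref{lem-pullback}, and likewise $N' = n(q-\delta')$ for $\XX'$. Since $n$ and $q$ are common to both curves, $N = N'$ forces $\delta = \delta'$. This also gives the "equivalently" clause: under (H2) the affine rational points are exactly $\YY$, and $|\YY| = N$.

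If one prefers an argument that uses the window hypothesis, as the statement invites, there is a second route through the hull. Permutation equivalence preserves $\dim \Hull$, as recalled at the end of \cref{sec-prelim}. For $s \in W(\XX) \cap W(\XX')$, \cref{thm-hull} gives
\[
\dim \Hull(\CC_s(\XX,\YY)) = c\,\mu(s) - n\delta + 1 - g_\XX .
\]
The same formula holds for $\XX'$ with $\delta'$ in place of $\delta$. Here $c$, $M$, $\mu(s)$ and $g_\XX = g_{\XX'}$ depend only on $(n,d,q)$ and $s$. Equating the two hull dimensions therefore gives $n\delta = n\delta'$, and hence $\delta = \delta'$.

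Neither route has a real obstacle. The only points to check are that $g_\XX$, $c$ and $M$ are determined by the type $(n,d)$ and by $q$, which is \cref{eq-genus} together with the definition of $M$, and that permutation equivalence preserves both the length and the hull dimension. I would present the hull argument as the main proof, since it is where the window hypothesis is used, and remark that the length argument gives the same conclusion.
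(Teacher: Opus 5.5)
Your hull argument is exactly the paper's proof: permutation equivalence preserves $\dim \Hull$, and equating the window formula $c\,\mu(s) - n\delta + 1 - g_\XX$ for the two curves gives $\delta = \delta'$, since its other ingredients depend only on $(n,d,q)$ and $s$. Your length remark is also correct and is the sharper observation: permutation equivalent codes have the same length and $N = n(q-\delta)$, so the conclusion holds without the window hypothesis or any hull computation.
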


\begin{proof}
The integers $M$, and hence $\mu (s)$, and the genus depend only on $(n,d,q)$, by \cref{eq-genus}. Permutation equivalence preserves $\dim \Hull$, so \cref{eq-main-dim} gives $c \mu (s) - n \delta + 1 - g_\XX = c \mu (s) - n \delta' + 1 - g_\XX$, whence $\delta = \delta'$, and $N = n ( q - \delta )$.
\end{proof}

\begin{conj}\label{tthm-moduli-invariant}
Let $\XX, \XX'$ be as in \cref{prop-perm-invariant}. If $\CC_s ( \XX, \YY )$ and $\CC_s ( \XX', \YY' )$ are monomially equivalent for every $s$, then $\XX$ and $\XX'$ have the same weighted moduli point, that is $\XX \cong \XX'$ over $\overline{\F}_q$.
\end{conj}

The numerical invariants $\delta$, and hence $N$, are recovered inside the window by \cref{prop-perm-invariant}. The moduli point is carried by the excess and, by \cref{cor-vacuous}, by nothing coarser: \cref{prop-eps-moduli} separates two degrees presenting the same meet, the same join and the same class, and \cref{exa-sameN} separates curves agreeing in all the numerical data, which no invariant of $\Pic ( \XX )$ visible to \cref{cor-detect} can do. What is not proved is that the whole profile $s \mapsto \varepsilon ( G_s, A_s )$ suffices to reconstruct the point. The statement was verified for all pairs of curves in \cref{tab-strata} and for the strata of \cref{tab-transitive}; by \cite{ca-me-ta-qi-pe} it cannot be strengthened to an assertion about $\dim \Hull$ alone, since every code over $\F_q$ with $q \geq 4$ is monomially equivalent to an \textup{LCD} one.

 %-----------------------------------------------------------------------------------------
%-----------------------------------------------------------------------------------------
\section{Split ramification and maximal curves}\label{sec-splitram}

Hypothesis \textup{(H1)} asks that $f$ have no root in $\F_q$. It is used twice, to make $\pi$ unramified over $\A^1 ( \F_q )$ and to keep $\Ram$ away from $\Supp D$, and by \cref{rem-hermitian} it is exactly what excludes the Fermat and Hermitian curves from \cref{sec-hull}. Those are the curves on which the window is widest and \cref{thm-hull} sharpest, so the exclusion is expensive. This section removes it at the opposite extreme, where $f$ splits completely over $\F_q$ and the ramification is as rational as it can be, and the whole of \cref{sec-hull} survives under a single substitution.

The reason is one sign. When no root of $f$ is rational the ramification enters the canonical divisor \cref{eq-eta} with coefficient $n-1$, and $\Ram$ contributes to the join. When every root is rational the fibre over each of them is $n \, Q_a$ rather than a reduced divisor, the extra $n \, Q_a$ cancels most of the $(n-1) \Ram$ coming from $\dv (dx)$, and $\Ram$ enters with coefficient $-1$ instead. It therefore migrates from the join to the meet, where it behaves exactly as $E$ does, and the effect on every count in \cref{sec-hull} is to replace $n \delta$ by $d + n \delta$. The effect on the order relation between $G_s$ and $A_s$ is not a substitution but a reversal: no code of the family is forced self-orthogonal, and instead every code of degree $s \geq M/2$ contains its dual.

\subsection{The split hypotheses}\label{ssec-splitstanding}

Throughout this section we replace \textup{(H1)}--\textup{(H3)} by the following, and retain \textup{(H4)}.

\begin{itemize}
\item[\textup{(H1$'$)}] $p \nmid n$, the polynomial $f \in \F_q [x]$ is separable of degree $d \geq 3$, and $f$ splits into distinct linear factors over $\F_q$.
\item[\textup{(H2$'$)}] $\YY$ is the set of all $\F_q$-rational points of $\XX$ lying over $\A^1 ( \F_q )$ and not lying on $\Ram$, and $D = \sum_{P \in \YY} P$, $N = \deg D = | \YY |$.
\item[\textup{(H3$'$)}] $\Delta = \left\{ \, a \in \F_q \; : \; f (a) \neq 0, \; f (a) \notin ( \F_q^* )^n \, \right\}$, $\delta = | \Delta |$, and $E = \pi^* ( \Delta )$.
\end{itemize}

Write $R = \{ a \in \F_q : f (a) = 0 \}$, a set of $d$ elements by \textup{(H1$'$)}, so that $\Ram = \sum_{a \in R} Q_a$ and each $Q_a$ is $\F_q$-rational, being the unique point of $\XX$ over $a$.

\begin{lem}\label{lem-splitbasic}
Assume \textup{(H1$'$)}--\textup{(H3$'$)}. Then $\Ram$, $E$ and $D_\infty$ are reduced and have pairwise disjoint supports, all three are disjoint from $\Supp D$, and
\[
\deg \Ram = d, \qquad \deg D_\infty = c, \qquad \deg E = n \delta, \qquad N = n \left( q - d - \delta \right) .
\]
Moreover $e_\Delta (P) \neq 0$ and $y (P) \neq 0$ for every $P \in \YY$, and $g_\XX \geq 1$.
\end{lem}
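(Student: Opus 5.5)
The proof follows \cref{lem-standing} and \cref{lem-pullback}, adapted to the split hypotheses; I will go fibre by fibre over $\PP^1$.

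First, by \textup{(H1$'$)} each root $a \in R$ is rational and $\pi$ is totally ramified there, so $\pi^*(a) = n Q_a$ with $Q_a$ a single rational point. Hence $\Ram = \sum_{a \in R} Q_a$ is reduced of degree $d$. For $a \in \F_q \setminus R$ one has $f(a) \neq 0$, so $\pi$ is unramified over $a$ and $\pi^*(a)$ is reduced of degree $n$, with geometric points the pairs $(a,b)$, $b^n = f(a)$. Exactly as in the proof of \cref{lem-pullback}, \textup{(H4)} gives a dichotomy: either all $n$ points are rational, when $f(a) \in (\F_q^*)^n$, or none is, when $a \in \Delta$. Thus $E = \pi^*(\Delta)$ is reduced of degree $n\delta$. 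Also $D = \sum_{a \in \F_q \setminus (R \cup \Delta)} \pi^*(a)$, since \textup{(H2$'$)} removes exactly the points $Q_a$ and the rational points over $\A^1$ are the $Q_a$ together with the split fibres. Therefore $N = n(q - d - \delta)$.

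Disjointness of supports comes from the fact that the three divisors and $D$ lie over pairwise disjoint subsets of $\PP^1$, namely $R$, $\Delta$, $\{\infty\}$ and $\F_q \setminus (R \cup \Delta)$. The divisor $D_\infty$ is reduced of degree $c$ by the ramification data recorded before \cref{lem-basic}.

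For the nonvanishing, take $P \in \YY$ over $a$ with $a \notin \Delta$. Then $e_\Delta(P) = \prod_{b \in \Delta}(a-b) \neq 0$. Moreover $y(P)^n = f(a) \neq 0$ because $a \notin R$, so $y(P) \neq 0$.

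Finally, $g_\XX \geq 1$ depends only on $(n,d,c)$ through \cref{eq-genus}, so the argument at the end of \cref{lem-standing} applies verbatim. That argument uses $c \leq \min\{n,d\}$ and splits into the cases $c \leq 2$ and $c \geq 3$.

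No step is a real obstacle. The only point needing care is that, unlike under \textup{(H1)}, the fibres over $R$ are not reduced. So the degree count for $N$ must subtract the $d$ roots from $q$ rather than use $\deg D + \deg E = qn$ as in \cref{lem-standing}.
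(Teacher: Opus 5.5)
Your proof is correct and follows the paper's argument essentially step by step: you go fibre by fibre over $R$, over $\Delta$, over the remaining affine values and over $\infty$; (H4) gives the all-or-nothing rationality of the unramified fibres; and the genus bound is quoted from \cref{lem-standing}. The only cosmetic difference is that you get $y(P) \neq 0$ from $y(P)^n = f(a) \neq 0$, whereas the paper reads it off $\dv(y) = \Ram - \tfrac{d}{c} D_\infty$.
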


\begin{proof}
For $a \in R$ the fibre is $\pi^* (a) = n \, Q_a$ with $Q_a$ rational, and $Q_a \notin \YY$ by \textup{(H2$'$)}; for $a \notin R$ the fibre is reduced of degree $n$ and consists of the pairs $(a,b)$ with $b^n = f (a) \neq 0$, all rational if $a \notin \Delta$ and none rational if $a \in \Delta$, exactly as in \cref{lem-pullback}, since \textup{(H4)} places $\zeta_n$ in $\F_q$. Hence $\Ram$ and $E$ are reduced with disjoint supports lying over $R$ and over $\Delta$ respectively, $\Supp D$ lies over $\F_q \setminus ( R \cup \Delta )$, and $\Supp D_\infty$ lies over $x = \infty$; the degrees and the count $N = n ( q - d - \delta )$ follow. The function $e_\Delta$ vanishes only over $\Delta$ and, by \cref{eq-divisors}, $y$ vanishes only on $\Ram$, so neither vanishes on $\YY$. The last assertion is the argument of \cref{lem-standing}, which uses only \cref{eq-genus}.
\end{proof}

%\subsection{The canonical divisor and the dictionary}\label{ssec-splitdict}

\begin{prop}\label{prop-splitdual}
Assume \textup{(H1$'$)}--\textup{(H3$'$)} and \textup{(H4)}. Then the differential $\eta = - dx / ( x^q - x )$ satisfies $v_P ( \eta ) = -1$ and $\Res_P ( \eta ) = 1$ for every $P \in \Supp D$, and
\begin{equation}\label{eq-splitK}
K \; = \; \dv ( \eta ) \; = \; - \Ram - D - E + M \, D_\infty , \qquad M = \frac{(q-1)n}{c} - 1 .
\end{equation}
Consequently, for every $s \geq 1$,
\begin{equation}\label{eq-splitdual}
\CC_s^\perp \; = \; C_L ( D, A_s ) , \qquad A_s \; = \; - \Ram - E + ( M - s ) D_\infty .
\end{equation}
\end{prop}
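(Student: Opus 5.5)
The plan is to follow the proof of \cref{prop-dual} line by line; the only changes are the fibres over the rational roots of $f$.

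First, the residues. Let $P \in \Supp D$ lie over $a \in \F_q$. By \cref{lem-splitbasic} we have $a \notin R$, so $P$ is unramified and $x - a$ is a uniformiser at $P$. Hence $x^q - x$ vanishes to order exactly one at $P$, and the same computation as in \cref{prop-dual} gives $v_P(\eta) = -1$ and $\Res_P(\eta) = -1/(x^q-x)'(a) = 1$. This uses only that $(x^q - x)' = -1$.

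Next, the divisor of $x^q - x$. We have $\dv(x^q-x) = \sum_{a \in \F_q} \pi^*(a) - \tfrac{qn}{c} D_\infty$, and we split the sum over $\F_q$ into three parts.
\begin{itemize}
\item[(i)] For $a \in R$ the fibre is $\pi^*(a) = n Q_a$, so these terms contribute $n \Ram$.
\item[(ii)] For $a \in \Delta$ the terms contribute $E$, by \textup{(H3$'$)}.
\item[(iii)] For the remaining $a$ the fibres consist of rational points not on $\Ram$, and these terms contribute $D$, by \textup{(H2$'$)} and the proof of \cref{lem-splitbasic}.
\end{itemize}
Therefore
\[
\dv(x^q-x) = n\Ram + E + D - \tfrac{qn}{c} D_\infty .
\]
The formula $\dv(dx) = (n-1)\Ram - (\tfrac{n}{c}+1) D_\infty$ of \cref{lem-basic} is a statement about the ramification of $\pi$ alone. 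It does not depend on whether the roots of $f$ are rational, so it still holds.

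Subtracting the two divisors gives
\[
\dv(\eta) = (n-1)\Ram - n\Ram - D - E + \left( \tfrac{qn}{c} - \tfrac{n}{c} - 1 \right) D_\infty ,
\]
and the coefficient of $D_\infty$ is $\tfrac{(q-1)n}{c} - 1 = M$. This is \cref{eq-splitK}. The sign of $\Ram$ flips to $-1$ because of the extra $n\Ram$ in $\dv(x^q - x)$, exactly as announced at the start of the section.

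Finally, we apply the duality \cref{eq-duality} with $G = G_s = s D_\infty$. Its hypotheses hold because $\Supp G_s \cap \Supp D = \emptyset$ (by \cref{lem-splitbasic}) and because of the residue computation above. The identification $\CC_s = C_L(D, sD_\infty)$ of \cref{eq-identification} still holds, since $z \neq 0$ on $\YY$. Then $D - G_s + K = -\Ram - E + (M-s) D_\infty = A_s$, which is \cref{eq-splitdual}.

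I expect no real obstacle. The one point to check with care is the bookkeeping in the fibre decomposition: the three sets $R$, $\Delta$ and the set of remaining $a$ must partition $\F_q$, with no double counting. That is exactly the content of \cref{lem-splitbasic}.
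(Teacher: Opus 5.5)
Your proposal is correct and is essentially the paper's own proof. You inherit the residue computation verbatim from \cref{prop-dual}, decompose $\dv(x^q-x)$ fibrewise to pick up the extra term $n\Ram$ from the rational totally ramified roots, subtract this from $\dv(dx)$, and apply \cref{eq-duality} with $G = G_s$; the paper adds only the consistency check $\deg K = 2g_\XX - 2$, which the argument does not need.
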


\begin{proof}
A point $P \in \Supp D$ lies over some $a \in \F_q \setminus ( R \cup \Delta )$ and is unramified, so the computation of \cref{prop-dual} applies verbatim and gives $v_P ( \eta ) = -1$ and $\Res_P ( \eta ) = 1$. For \cref{eq-splitK}, the fibres computed in \cref{lem-splitbasic} give
\[
\dv ( x^q - x ) \; = \; \sum_{a \in \F_q} \pi^* (a) - \frac{qn}{c} D_\infty \; = \; D + E + n \Ram - \frac{qn}{c} D_\infty ,
\]
the term $n \Ram$ being the only change from \cref{prop-dual}, and subtracting this from $\dv (dx) = (n-1) \Ram - \left( \frac{n}{c} + 1 \right) D_\infty$ of \cref{eq-divisors} yields
\[
\dv ( \eta ) \; = \; (n-1) \Ram - n \Ram - D - E + \left( \frac{qn}{c} - \frac{n}{c} - 1 \right) D_\infty ,
\]
which is \cref{eq-splitK}. Then \cref{eq-splitdual} is \cref{eq-duality} with $G = G_s = s D_\infty$, since $D - G_s + \dv ( \eta ) = A_s$. As a check, $\deg K = -d - N - n \delta + cM = nd - n - d - c = 2 g_\XX - 2$ by \cref{eq-genus} and \cref{lem-splitbasic}.
\end{proof}

\begin{lem}\label{lem-splitmeetjoin}
Assume \textup{(H1$'$)}--\textup{(H3$'$)} and \textup{(H4)}. For every $s \geq 1$,
\begin{equation}\label{eq-splitmeetjoin}
\begin{split}
G_s \wedge A_s \; &= \; \mu (s) \, D_\infty - \Ram - E , \\
G_s \vee A_s \; &= \; \max \{ s, M-s \} \, D_\infty ,
\end{split}
\end{equation}
and therefore
\begin{equation}\label{eq-splitdegrees}
\begin{split}
\deg ( G_s \wedge A_s ) \; &= \; c \, \mu (s) - d - n \delta , \\
\deg ( G_s \vee A_s ) \; &= \; c \max \{ s, M - s \} .
\end{split}
\end{equation}
\end{lem}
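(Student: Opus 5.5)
The plan mirrors the proof of \cref{lem-meetjoin}: compute the coefficientwise minimum and maximum one support at a time, then take degrees.

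First, by \cref{lem-splitbasic} the divisors $\Ram$, $E$ and $D_\infty$ are reduced with pairwise disjoint supports. Hence $G_s = s D_\infty$ and $A_s = -\Ram - E + (M-s) D_\infty$ from \cref{eq-splitdual} are both supported on $\Supp \Ram \cup \Supp E \cup \Supp D_\infty$, with constant coefficients along each of the three pieces. The coefficient pairs $(G_s, A_s)$ are as follows.
\begin{itemize}
\item Along $\Supp \Ram$ they are $0$ and $-1$, so the minimum is $-1$ and the maximum is $0$.
\item Along $\Supp E$ they are likewise $0$ and $-1$, giving $-1$ and $0$.
\item Along $\Supp D_\infty$ they are $s$ and $M-s$, giving $\mu(s)$ and $\max\{s, M-s\}$.
\item Elsewhere both coefficients vanish.
\end{itemize}
Assembling these gives \cref{eq-splitmeetjoin}. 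The only difference from \cref{lem-meetjoin} is that the coefficient of $\Ram$ in $A_s$ is now $-1$ rather than $n-1$, so $\Ram$ moves from the join to the meet.

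For \cref{eq-splitdegrees}, substitute $\deg \Ram = d$, $\deg E = n\delta$ and $\deg D_\infty = c$ from \cref{lem-splitbasic}. As a consistency check, the two degrees should sum to $\deg G_s + \deg A_s = N + 2g_\XX - 2$; this follows from \cref{eq-splitK} and the identity $G \wedge A + G \vee A = G + A$.

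There is no real obstacle. The only point needing care is the disjointness of the supports, which lets the minimum and maximum be computed piecewise, and that is exactly what \cref{lem-splitbasic} supplies.
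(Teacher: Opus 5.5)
Your proposal is correct and matches the paper's proof: you use the disjoint supports from \cref{lem-splitbasic} to compute the minimum and maximum one support at a time, with coefficient pairs $(0,-1)$ on $\Supp \Ram$ and $\Supp E$ and $(s, M-s)$ on $\Supp D_\infty$, and then read off the degrees. Your degree-sum consistency check against $N + 2 g_\XX - 2$ is also correct, though the proof does not need it.
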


\begin{proof}
By \cref{lem-splitbasic} the divisors $\Ram$, $E$ and $D_\infty$ have pairwise disjoint supports, so the coefficientwise minimum and maximum may be computed one support at a time. Along $\Supp \Ram$ the coefficients of $G_s$ and $A_s$ are $0$ and $-1$, along $\Supp E$ they are $0$ and $-1$, along $\Supp D_\infty$ they are $s$ and $M-s$, and elsewhere both vanish. This gives \cref{eq-splitmeetjoin}, and \cref{eq-splitdegrees} follows from \cref{lem-splitbasic}.
\end{proof}

%The comparison with \cref{eq-meetjoin} is the whole of this section. There $\Ram$ appeared in the join with coefficient $n-1$; here it appears in the meet with coefficient $-1$, alongside $E$ and with the same effect on every degree. Accordingly we redefine the shifted argument.

\begin{defn}\label{def-splitnu}
Under \textup{(H1$'$)}--\textup{(H3$'$)} and \textup{(H4)} put
\[
\nu (s) \; = \; \mu (s) - \frac{d + n \delta}{c} \; \in \; \Z ,
\]
an integer because $c$ divides $d$ and $c$ divides $n$.
\end{defn}

\begin{lem}\label{lem-splitshift}
Assume \textup{(H1$'$)}--\textup{(H3$'$)} and \textup{(H4)}, let $\mu \in \Z$ and put $\nu = \mu - \frac{d + n \delta}{c}$. Then multiplication by $y \, e_\Delta (x)$ is an isomorphism
\[
L ( \nu D_\infty ) \; \xrightarrow{\ \sim\ } \; L \left( \mu D_\infty - \Ram - E \right) ,
\]
and the diagonal matrix $\operatorname{diag} \left( y (P) e_\Delta (P) \right)_{P \in \YY}$ carries $C_L ( D, \nu D_\infty )$ onto $C_L \left( D, \mu D_\infty - \Ram - E \right)$. In particular the two codes are monomially equivalent and
\[
\ell \left( \mu D_\infty - \Ram - E \right) \; = \; \Phi ( \nu ) \; = \; \sum_{j=0}^{n-1} \max \left\{ 0, \; \left\lfloor \frac{c \nu - dj}{n} \right\rfloor + 1 \right\} .
\]
\end{lem}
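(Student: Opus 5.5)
The proof will follow that of \cref{lem-shift}, with the single change that the multiplier $e_\Delta (x)$ is replaced by $y \, e_\Delta (x)$, so that both $\Ram$ and $E$ are absorbed at once. The first step is to compute the divisor of the multiplier. By \cref{eq-divisors} one has $\dv (y) = \Ram - \tfrac{d}{c} D_\infty$. Under \textup{(H1$'$)}--\textup{(H3$'$)} the argument of \cref{lem-pullback} applies verbatim to the points of $\Delta$, which are not roots of $f$, and gives $\dv ( e_\Delta (x) ) = E - \tfrac{n \delta}{c} D_\infty$. Here one uses $E = \pi^* ( \Delta )$ and the fact that the fibres over $\Delta$ are reduced, both recorded in \cref{lem-splitbasic}. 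Adding the two,
\[
\dv \left( y \, e_\Delta (x) \right) \; = \; \Ram + E - \frac{d + n \delta}{c} \, D_\infty .
\]

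Next, for $h \in \F_q ( \XX )^*$ put $h' = h / ( y \, e_\Delta (x) )$. Then
\[
\dv (h') + \nu D_\infty \; = \; \dv (h) - \Ram - E + \tfrac{d + n\delta}{c} D_\infty + \nu D_\infty \; = \; \dv (h) + \mu D_\infty - \Ram - E .
\]
Hence $h \mapsto h'$ is an $\F_q$-linear bijection from $L ( \mu D_\infty - \Ram - E )$ onto $L ( \nu D_\infty )$, and its inverse is multiplication by $y \, e_\Delta (x)$. This gives the isomorphism of Riemann-Roch spaces, and therefore $\ell ( \mu D_\infty - \Ram - E ) = \ell ( \nu D_\infty ) = \Phi ( \nu )$ by \cref{eq-Phi}. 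That last identity holds for every integer $\nu$: for $\nu \geq 0$ it is \cref{lem-bridge}, and for $\nu < 0$ both sides vanish.

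For the codes, note that $\Supp D$ is disjoint from $\Supp \Ram$, $\Supp E$ and $D_\infty$ by \cref{lem-splitbasic}, so both evaluation maps are defined. Evaluating $h = h' \, y \, e_\Delta (x)$ at $P \in \YY$ gives $h(P) = y(P) e_\Delta (P) h'(P)$. The diagonal entries $y(P) e_\Delta (P)$ are nonzero by \cref{lem-splitbasic}, so the diagonal matrix is invertible and carries $C_L ( D, \nu D_\infty )$ onto $C_L ( D, \mu D_\infty - \Ram - E )$. This is a monomial equivalence.

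There is no real obstacle here. The only point needing care is that $y$ must not vanish on $\YY$. This is exactly why \textup{(H2$'$)} removes the rational ramification points from the evaluation set, and it has already been established in \cref{lem-splitbasic}.
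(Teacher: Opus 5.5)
Your proposal is correct and follows the paper's proof. You compute $\dv ( y \, e_\Delta (x) ) = \Ram + E - \frac{d + n \delta}{c} D_\infty$ from \cref{eq-divisors} and the argument of \cref{lem-pullback}, deduce the bijection of Riemann--Roch spaces, and use the nonvanishing of $y (P) e_\Delta (P)$ on $\YY$ from \cref{lem-splitbasic} for the monomial equivalence; the only difference is cosmetic, in that you write the map as $h \mapsto h / ( y \, e_\Delta (x) )$ as in \cref{lem-shift}, while the paper writes it as multiplication in the other direction.
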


\begin{proof}
By \cref{eq-divisors} and \cref{lem-pullback},
\[
\dv \left( y \, e_\Delta (x) \right) \; = \; \Ram + E - \frac{d + n \delta}{c} D_\infty ,
\]
so for $h \in \F_q ( \XX )^*$ the function $h' = h \, y \, e_\Delta (x)$ satisfies $\dv ( h' ) + \mu D_\infty - \Ram - E = \dv (h) + \nu D_\infty$, and $h \mapsto h'$ is an $\F_q$-linear bijection $L ( \nu D_\infty ) \to L ( \mu D_\infty - \Ram - E )$. It is compatible with evaluation up to the scalars $y (P) e_\Delta (P)$, which are nonzero by \cref{lem-splitbasic}. The last formula is \cref{eq-ell} applied to $\nu D_\infty$, with the convention that the right hand side vanishes for $\nu < 0$.
\end{proof}

\begin{thm}\label{thm-splitdict}
Assume \textup{(H1$'$)}--\textup{(H3$'$)} and \textup{(H4)}. Then every statement of \cref{sec-hull} from \cref{prop-dual} onwards that involves the pair $( G_s, A_s )$ only through the meet, the join and their degrees holds verbatim, with $n \delta$ replaced throughout by $d + n \delta$ and with $\nu (s)$ as in \cref{def-splitnu}. The statements that use the order relation between $G_s$ and $A_s$, or the expression of $\deg A_s$ in the data of $f$, do not, and they are the following. \cref{prop-dichotomy} and \cref{cor-asym} fail, the order relation being reversed, and are replaced by \cref{thm-dualcontaining} below. In \cref{cor-distances} the first expression becomes $\deg A_s = c ( M - s ) - d - n \delta$, the second expression and the two designed distance bounds surviving unchanged. In \cref{cor-thresholds} the threshold becomes $s^{+} = M - \frac{d + n \delta}{c} + 1$. Explicitly, writing
\[
t (s) \; = \; 2 g_\XX - 2 - c \, \mu (s) + d + n \delta \; = \; 2 g_\XX - 2 - c \, \nu (s) ,
\]
one has $\deg A_s = N + 2 g_\XX - 2 - cs$, the window and the detection band of \cref{defn-window} are the intervals of \cref{prop-window} with $\mu_0 = \left\lfloor \frac{2 g_\XX - 2 + d + n \delta}{c} \right\rfloor + 1$ and $\mu_1 = \left\lceil \frac{g_\XX - 1 + d + n \delta}{c} \right\rceil$, and for every $s$ with $c \, \nu (s) < N$
\begin{equation}\label{eq-splitquasi}
\dim \Hull ( \CC_s ) \; = \; \Phi \left( \nu (s) \right) + \varepsilon ( G_s, A_s ) .
\end{equation}
On the window one has $\varepsilon ( G_s, A_s ) = 0$ and
\begin{equation}\label{eq-splitmain}
\Hull ( \CC_s ) \; = \; C_L \left( D, \mu (s) D_\infty - \Ram - E \right) , \qquad \dim \Hull ( \CC_s ) \; = \; c \, \mu (s) - d - n \delta + 1 - g_\XX ,
\end{equation}
and $s \mapsto \dim \Hull ( \CC_s )$ is symmetric about $M/2$ whenever the excess vanishes identically, any failure of that symmetry being carried by the excess alone.
\end{thm}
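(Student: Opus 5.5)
The plan is to audit the proofs of \cref{sec-hull} and locate exactly where the identity of the meet and join entered, replacing \cref{prop-dual}, \cref{lem-meetjoin} and \cref{lem-shift} by \cref{prop-splitdual}, \cref{lem-splitmeetjoin} and \cref{lem-splitshift}. The general results of \cref{sec-window} (\cref{thm-meet}, \cref{thm-canonical}, \cref{cor-window}, \cref{cor-detect}, \cref{cor-clifford}) assume only that $D$ is a reduced sum of rational points disjoint from $\Supp G_s$ and $\Supp A_s$, and that $\eta$ has residue one on $D$. \cref{lem-splitbasic} and \cref{prop-splitdual} supply both facts, so those results apply unchanged to $(G_s,A_s)$ with $A_s=-\Ram-E+(M-s)D_\infty$.

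First I recompute the derived quantities. From \cref{eq-splitdegrees},
\[
t(s)=2g_\XX-2-\deg(G_s\wedge A_s)=2g_\XX-2-c\,\mu(s)+d+n\delta .
\]
With \cref{def-splitnu} this equals $2g_\XX-2-c\,\nu(s)$. Summing the coefficients in \cref{eq-splitdual} gives $\deg A_s=c(M-s)-d-n\delta$. The relation $G_s+A_s=D+K$ with $\deg K=2g_\XX-2$ gives $\deg A_s=N+2g_\XX-2-cs$.

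I then go through the statements one at a time.

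\cref{prop-window}: the proof uses only $t(s)$ as a function of $\mu(s)$, so it carries over with the stated $\mu_0,\mu_1$.

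\cref{thm-quasi}: the proof uses \cref{lem-shift} (now \cref{lem-splitshift}) and the vanishing of $\ell(G_s\wedge A_s-D)$, which has degree $c\nu(s)-N<0$, giving \cref{eq-splitquasi}. Part (2) needs the class of $K$. By \cref{eq-splitK}, together with $\Ram\sim\frac dc D_\infty$, $E\sim\frac{n\delta}{c}D_\infty$ and $D\sim\frac{qn}{c}D_\infty-E-n\Ram$, one again gets $K\sim\gamma D_\infty$. The non-speciality criterion $c\nu(s)>2g_\XX-2$ is therefore unchanged.

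\cref{thm-hull}: on the window it follows from \cref{cor-window}, giving \cref{eq-splitmain}.

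\cref{prop-asym}: the symmetry uses only that \cref{eq-splitmeetjoin} depends on $s$ through $\mu(s)$ and $\max\{s,M-s\}$.

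\cref{cor-distances} and \cref{cor-thresholds}: these are redone from the new $\deg A_s$. Solving $\deg A_s<0$, i.e. $c(s-M)>-(d+n\delta)$, gives $s\ge M-\frac{d+n\delta}{c}+1=s^+$. The designed-distance bounds used only $\deg G_s=cs$ and $\deg A_s=N+2g_\XX-2-cs$, so they survive.

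\cref{lem-classes}, \cref{cor-vacuous}, \cref{cor-eps-bound} and \cref{prop-monotone-bound} are routine reruns with $n\delta\to d+n\delta$.

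For the failing statements, I compute $A_s-G_s=-\Ram-E+(M-2s)D_\infty$. Its coefficient on $\Ram$ is $-1$, so $G_s\le A_s$ never holds, and \cref{prop-dichotomy} and \cref{cor-asym} fail. Instead $A_s\le G_s$ exactly when $2s\ge M$, which is the dual-containing relation deferred to \cref{thm-dualcontaining}.

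The main obstacle is making the word ``verbatim'' honest, that is, checking that no proof in \cref{sec-hull} silently used (H1), or $\Ram$ sitting in the join, other than through the listed items. Two places need care.

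\cref{thm-so} uses $\dv(\omega)\ge -D-E$. Here $\omega=x^i\eta$ acquires simple poles at the $Q_a$, with residue $x^i(Q_a)\Res_{Q_a}\eta$. So it is not a meet–join statement, and I would classify it among those not claimed.

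\cref{prop-window-empty} and the transitive strata depend on $f(0)\ne0$. I would check these and exclude them explicitly, since the theorem only claims statements expressed through the meet, the join and their degrees.
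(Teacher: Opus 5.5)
Your proposal is correct and follows essentially the paper's route. You substitute \cref{lem-splitbasic}, \cref{prop-splitdual}, \cref{lem-splitmeetjoin} and \cref{lem-splitshift} for their \cref{sec-hull} counterparts, invoke the unconditional results of \cref{sec-window}, recompute $\deg A_s$, $t(s)$ and $s^{+}$, and isolate \cref{prop-dichotomy} and \cref{cor-asym} through the coefficient $-1$ of $A_s - G_s$ along $\Supp \Ram$.

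Two small comparisons. Your explicit check that $K \sim \gamma D_\infty$ survives is needed for \cref{thm-quasi}(2), whose original proof used the (H1) form of $\dv(x^q - x)$; the paper's proof leaves this step implicit and only carries it out later, in \cref{lem-splitclasses}. On the other hand, \cref{thm-so}, \cref{prop-window-empty}, \cref{lem-classes} and \cref{prop-monotone-bound} lie outside \cref{sec-hull}, so they need no audit here. The last of these would not transfer verbatim in any case: its clause on curves with $\delta = 0$ would require a different inequality.
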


\begin{proof}
\cref{lem-splitbasic} replaces \cref{lem-standing} and \cref{lem-pullback}, \cref{prop-splitdual} replaces \cref{prop-dual}, \cref{lem-splitmeetjoin} replaces \cref{lem-meetjoin} and \cref{lem-splitshift} replaces \cref{lem-shift}; in each case the statement is the same with $n \delta$ replaced by $d + n \delta$, because $\Ram$ and $E$ enter \cref{eq-splitmeetjoin} with the same coefficient and their degrees add. The identity $\deg A_s = N + 2 g_\XX - 2 - cs$ follows from $G_s + A_s = D + K$ and $\deg K = 2 g_\XX - 2$, and $t(s) = 2 g_\XX - 2 - \deg ( G_s \wedge A_s )$ is \cref{eq-splitdegrees}. Everything downstream in \cref{sec-hull} except \cref{prop-dichotomy} and \cref{cor-asym} is deduced from these four statements together with results of \cref{sec-window}, which are unconditional; those two use the sign of the coefficient of $A_s - G_s$ along $\Supp \Ram$, which is now negative, and \cref{ssec-dualcontaining} treats them. The modified expressions are immediate from \cref{eq-splitdual}: $\deg A_s = c ( M - s ) - d - n \delta$, negative precisely when $s \geq M - \frac{d + n \delta}{c} + 1$, which is the new $s^{+}$. In particular \cref{eq-splitquasi} is \cref{thm-meet} with \cref{lem-splitshift}, the vanishing of the excess on the window is \cref{cor-window}, \cref{eq-splitmain} is \cref{thm-hull}, and the symmetry statement is \cref{prop-asym}, whose proof uses only the invariance of \cref{eq-splitmeetjoin} under $s \mapsto M-s$.
\end{proof}

%\subsection{The classes in play}\label{ssec-splitclasses}

The collapse of \cref{lem-classes} persists, with the same proof and the same consequence, so that \cref{cor-detect} is no more informative here than there.

\begin{lem}\label{lem-splitclasses}
Assume \textup{(H1$'$)}--\textup{(H3$'$)} and \textup{(H4)}, and put $\gamma = \frac{2 g_\XX - 2}{c}$. Then, in $\Pic ( \XX )$,
\[
\begin{split}
\Ram \; &\sim \; \frac{d}{c} \, D_\infty , \qquad E \; \sim \; \frac{n \delta}{c} \, D_\infty , \qquad D \; \sim \; \frac{N}{c} \, D_\infty , \qquad K \; \sim \; \gamma \, D_\infty , \\
G_s \wedge A_s \; &\sim \; \nu (s) \, D_\infty , \qquad K - G_s \wedge A_s \; \sim \; \frac{t (s)}{c} \, D_\infty ,
\end{split}
\]
and consequently $\varepsilon ( G_s, A_s ) \leq \Phi \left( \gamma - \nu (s) \right)$ for every $s \geq 1$.
\end{lem}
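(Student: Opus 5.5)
The plan is to follow the proof of \cref{lem-classes} line by line, replacing the two inputs that change under the split hypotheses. The equivalence $\Ram \sim \frac{d}{c} D_\infty$ is the second line of \cref{eq-divisors}, $\dv (y) = \Ram - \frac{d}{c} D_\infty$, which does not depend on (H1). The equivalence $E \sim \frac{n\delta}{c} D_\infty$ comes from the divisor of $e_\Delta (x)$. Under (H3$'$) every $a \in \Delta$ is not a root of $f$, so $\dv (x-a) = \pi^*(a) - \frac{n}{c} D_\infty$ by \cref{lem-basic}. Summing over $\Delta$ gives $\dv (e_\Delta) = E - \frac{n\delta}{c} D_\infty$, exactly as in \cref{lem-pullback}.

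For $D$, I will use the split form of $\dv(x^q - x)$ computed in the proof of \cref{prop-splitdual}, namely $D + E + n\Ram - \frac{qn}{c} D_\infty$. This gives
\[
D \sim \frac{qn}{c} D_\infty - E - n \Ram \sim \frac{qn - n\delta - nd}{c} D_\infty = \frac{N}{c} D_\infty ,
\]
using $N = n(q - d - \delta)$ from \cref{lem-splitbasic}. For $K$, I substitute these classes into \cref{eq-splitK}, $K = -\Ram - D - E + M D_\infty$. The coefficient of $D_\infty$ is then
\[
-\frac{d}{c} - \frac{n(q-d-\delta)}{c} - \frac{n\delta}{c} + \frac{(q-1)n}{c} - 1 = \frac{nd - d - n - c}{c} = \gamma
\]
by \cref{eq-genus}. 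As a cross-check, this class has degree $2g_\XX - 2$.

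For the meet, \cref{lem-splitmeetjoin} gives $G_s \wedge A_s = \mu(s) D_\infty - \Ram - E$. Its class is therefore $(\mu(s) - \frac{d + n\delta}{c}) D_\infty = \nu(s) D_\infty$ by \cref{def-splitnu}. Subtracting from $K$ gives $(\gamma - \nu(s)) D_\infty$, and $c(\gamma - \nu(s)) = 2g_\XX - 2 - c\nu(s) = t(s)$ by the expression for $t$ in \cref{thm-splitdict}.

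The final bound follows \cref{cor-eps-bound}. \cref{eq-eps-h1} from \cref{thm-canonical} is unconditional, so it gives $\varepsilon(G_s, A_s) \leq \ell(K - G_s \wedge A_s)$. Since $\ell$ depends only on the class, this equals $\ell((\gamma - \nu(s)) D_\infty)$, which is $\Phi(\gamma - \nu(s))$ by \cref{eq-Phi}; when the argument is negative both sides are $0$. The only points needing care are hypothesis checks rather than a real obstacle. First, \cref{thm-canonical} applies because \cref{prop-splitdual} supplies the residue normalisation and the disjointness of supports from \cref{lem-splitbasic}. Second, the bookkeeping of the $n\Ram$ term in $\dv(x^q - x)$ must come out right: it is what turns $N$ into $n(q - d - \delta)$ while $\gamma$ stays the same as in \cref{lem-classes}.
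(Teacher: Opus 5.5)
Your proposal is correct and follows the paper's proof step for step. The first two classes come from $\dv(y)$ and $\dv(e_\Delta)$, the class of $D$ from the split form of $\dv(x^q - x)$, the class of $K$ by substitution into \cref{eq-splitK}, the meet from \cref{lem-splitmeetjoin} with \cref{def-splitnu}, and the bound from \cref{eq-eps-h1} together with $\Phi$. One small difference is in your favour: you derive $\dv(e_\Delta)$ directly under \textup{(H3$'$)}, whereas the paper simply cites \cref{lem-pullback}, whose stated hypotheses are \textup{(H1)}--\textup{(H4)}.
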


\begin{proof}
The first two are \cref{eq-divisors} and \cref{lem-pullback}. For the third, the proof of \cref{prop-splitdual} gives $D \sim \frac{qn}{c} D_\infty - E - n \Ram \sim \frac{n ( q - \delta - d )}{c} D_\infty = \frac{N}{c} D_\infty$. For the fourth, \cref{eq-splitK} makes $K$ equivalent to $\lambda D_\infty$ with
\[
\lambda \; = \; - \frac{d}{c} - \frac{n ( q - d - \delta )}{c} - \frac{n \delta}{c} + \frac{(q-1)n}{c} - 1 \; = \; \frac{nd - n - d}{c} - 1 \; = \; \gamma .
\]
The last two follow from \cref{eq-splitmeetjoin} and \cref{lem-splitshift}, and the bound on the excess is \cref{eq-eps-h1} together with \cref{eq-ell}, exactly as in \cref{cor-eps-bound}.
\end{proof}

%\subsection{Dual-containing codes}\label{ssec-dualcontaining}

\cref{prop-dichotomy} said that under \textup{(H1)} the relation $G_s \leq A_s$ holds precisely on the totally split locus and precisely in the lower half of the degree range, so that the self-orthogonal members are exactly there. Under \textup{(H1$'$)} the inequality is reversed, and it holds on the whole locus with no arithmetic condition whatever.

\begin{thm}\label{thm-dualcontaining}
Assume \textup{(H1$'$)}--\textup{(H3$'$)} and \textup{(H4)} and let $s \geq 1$. Then $G_s \not\leq A_s$ for every $s$, so no $\CC_s$ is self-orthogonal by the divisorial criterion, while
\[
A_s \; \leq \; G_s \quad \Longleftrightarrow \quad 2s \; \geq \; M .
\]
When $2s \geq M$ the code $\CC_s$ contains its dual, $\varepsilon ( G_s, A_s ) = 0$, and
\[
\Hull ( \CC_s ) \; = \; \CC_s^\perp , \qquad \dim \Hull ( \CC_s ) \; = \; N - \dim \CC_s .
\]
\end{thm}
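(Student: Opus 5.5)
The plan is to read everything off the explicit difference $A_s - G_s$, exactly as in the proof of \cref{prop-dichotomy}, and then let \cref{prop-degenerate} carry the conclusions. By \cref{eq-splitdual},
\[
A_s - G_s \; = \; - \Ram - E + ( M - 2s ) \, D_\infty ,
\]
and by \cref{lem-splitbasic} the three divisors $\Ram$, $E$, $D_\infty$ are reduced with pairwise disjoint supports. So the coefficient of $A_s - G_s$ is $-1$ at every point of $\Supp \Ram$, $-1$ at every point of $\Supp E$, $M - 2s$ at every point of $\Supp D_\infty$, and $0$ elsewhere.

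For the first assertion, $\Ram \neq 0$ because $\deg \Ram = d \geq 3$, so $A_s - G_s$ has a negative coefficient and $G_s \not\leq A_s$ for every $s$. This holds whatever $\delta$ is, which is the contrast with \cref{prop-dichotomy}. For the equivalence, $G_s - A_s = \Ram + E + (2s - M) D_\infty$ has nonnegative coefficients along $\Ram$ and $E$. Hence it is effective if and only if $2s - M \geq 0$, and $D_\infty \neq 0$ makes this condition necessary as well as sufficient.

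When $2s \geq M$, \cref{prop-degenerate} applied to the pair with $A_s \leq G_s$ gives $\varepsilon(G_s, A_s) = \varepsilon(A_s, G_s) = 0$. It also gives $C_L(D, A_s) \subseteq C_L(D, G_s)$, which by \cref{prop-splitdual} reads $\CC_s^\perp \subseteq \CC_s$. Then $\Hull(\CC_s) = \CC_s \cap \CC_s^\perp = \CC_s^\perp$, and since the standard form is nondegenerate, $\dim \CC_s^\perp = N - \dim \CC_s$. As a consistency check, the meet is then $A_s$ and \cref{thm-meet}(4) returns $\ell(A_s) - \ell(A_s - D)$, which is $\dim C_L(D, A_s)$.

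No step is a real obstacle. The only care needed is to justify the coefficient bookkeeping through the disjointness of supports in \cref{lem-splitbasic}, and to note that the divisorial criterion $G_s \leq A_s$ is only sufficient. The statement says "by the divisorial criterion" for exactly this reason, so I will not claim that no $\CC_s$ is self-orthogonal in the code-theoretic sense.
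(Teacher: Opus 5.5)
Your proposal is correct and follows the paper's proof essentially line for line. You read off the coefficients of $A_s - G_s = -\Ram - E + (M-2s)D_\infty$ one support at a time using the disjointness from \cref{lem-splitbasic}, use $\Ram \neq 0$ to get $G_s \not\leq A_s$, and then apply \cref{prop-degenerate} with the two divisors exchanged to obtain $\varepsilon(G_s,A_s)=0$ and $\Hull(\CC_s) = \CC_s^\perp$. Your remark that $D_\infty \neq 0$ is needed for the converse direction, and your caution that the divisorial criterion is only sufficient, make explicit two points the paper leaves implicit.
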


\begin{proof}
By \cref{eq-splitdual}, $A_s - G_s = - \Ram - E + ( M - 2s ) D_\infty$. Its coefficient along $\Supp \Ram$ is $-1$, and $\Ram \neq 0$ because $d \geq 3$, so $A_s - G_s \geq 0$ fails for every $s$ and $G_s \not\leq A_s$. Its coefficients along $\Supp \Ram$ and $\Supp E$ are at most $0$, so $A_s - G_s \leq 0$ if and only if $M - 2s \leq 0$. In that case $G_s \wedge A_s = A_s$ and $G_s \vee A_s = G_s$, so \cref{prop-degenerate} applies with the two divisors exchanged and gives $\varepsilon ( G_s, A_s ) = 0$ together with $\Hull ( \CC_s ) = C_L ( D, A_s ) = \CC_s^\perp$; the dimension is then $N - \dim \CC_s$.
\end{proof}

This is the degenerate zone $\textup{(Z0)}$ of \cref{defn-zones}, entered from the other side. It occupies the upper half of the range unconditionally, so on this locus the hull is large for a reason that has nothing to do with the number of rational points, and by \cref{prop-degenerate} it carries no information about the moduli point there; what remains of interest is the lower half, where the excess is what \cref{eq-splitquasi} leaves undetermined. There it is again read from the profile alone: \cref{prop-asym} holds verbatim and $\varepsilon ( G_{M-s}, A_{M-s} ) = 0$ for $s \leq \lfloor M/2 \rfloor$ by \cref{thm-dualcontaining}, so $\varepsilon ( G_s, A_s ) = \dim \Hull ( \CC_s ) - \dim \Hull ( \CC_{M-s} )$ for every such $s$, the mirror of \cref{cor-asym}; on both curves of \cref{tab-hermitian} this gives $1, 1, 0$ at $s = 1, 2, 3$. The construction that the upper half does supply is the one that \cref{prop-dichotomy} supplied in the lower half under \textup{(H1)}, with the two ingredients exchanged.

\begin{cor}\label{cor-splitcss}
Assume \textup{(H1$'$)}--\textup{(H3$'$)} and \textup{(H4)} and let $s$ satisfy $M \leq 2s$ and $2 g_\XX - 2 < cs < N$. Then $\dim \CC_s = cs + 1 - g_\XX$, the code $\CC_s$ contains its dual, and the \textup{CSS} construction applied to $\CC_s^\perp \subseteq \CC_s$ returns a quantum code with parameters
\[
\left[\left[ \, N, \; 2cs + 2 - 2 g_\XX - N, \; \geq N - cs \, \right]\right]_q ,
\]
requiring no entanglement. Its designed distance decreases and its dimension increases with $s$, their sum being constant.
\end{cor}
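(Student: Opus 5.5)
Three things are needed for \cref{cor-splitcss}: the dimension of $\CC_s$, the containment $\CC_s^\perp \subseteq \CC_s$, and the CSS parameters.

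\textbf{Dimension.} By \cref{eq-identification}, $\CC_s = C_L(D, sD_\infty)$, so
\[
\dim \CC_s = \ell(sD_\infty) - \ell(sD_\infty - D).
\]
The hypothesis $cs > 2g_\XX - 2$ makes $sD_\infty$ non-special, so Riemann-Roch gives $\ell(sD_\infty) = cs + 1 - g_\XX$. The hypothesis $cs < N$ makes $\deg(sD_\infty - D) < 0$, so $\ell(sD_\infty - D) = 0$. Together these give $\dim \CC_s = cs + 1 - g_\XX$.

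\textbf{Containment.} The condition $M \leq 2s$ is the hypothesis of \cref{thm-dualcontaining}. That theorem gives $A_s \leq G_s$, hence $\CC_s^\perp = C_L(D, A_s) \subseteq \CC_s$.

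\textbf{CSS parameters.} For a dual-containing code $C$ of length $N$ and dimension $k$, the CSS construction yields $[[N, 2k - N, \geq d(C)]]_q$. A minimum distance bound of $d(C)$ suffices here, since $C^\perp \subseteq C$ makes the relevant distance at least $d(C)$. Substituting $k = cs + 1 - g_\XX$ gives the dimension $2cs + 2 - 2g_\XX - N$.

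For the distance, \cref{cor-distances} gives $d(\CC_s) \geq N - cs$ when $cs < N$. Its proof uses only that $\deg G_s = cs$, so it carries over unchanged under the split hypotheses, as \cref{thm-splitdict} records. No entanglement is consumed because the hull equals $\CC_s^\perp$.

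\textbf{Monotonicity.} As $s$ increases by one, the dimension $2cs + 2 - 2g_\XX - N$ increases by $2c$ and the designed distance $N - cs$ decreases by $c$. So the literal sum is not constant. What is constant is $(\text{dimension}) + 2\cdot(\text{designed distance}) = N + 2 - 2g_\XX$, which is how I would read "their sum being constant". I would note this normalization when writing the proof, or else state the trade-off in that weighted form.

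The one step I expect to need care is the convention for the CSS distance, specifically whether the bound should be $d(\CC_s)$ or the weight of $\CC_s \setminus \CC_s^\perp$. Taking $d(\CC_s)$ as a lower bound is safe, since every nonzero codeword of $\CC_s$ has weight at least $d(\CC_s)$.
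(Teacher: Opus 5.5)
Your proposal is correct and follows the paper's proof step for step: Riemann--Roch together with $cs < N$ for the dimension, \cref{thm-dualcontaining} for $\CC_s^\perp \subseteq \CC_s$, and the CSS parameters $[[N, 2k-N, \geq d(\CC_s)]]_q$ with the distance bound taken from the argument of \cref{cor-distances}. Your reading of ``their sum being constant'' also matches the paper, whose proof verifies $(2cs+2-2g_\XX-N) + 2(N-cs) = N+2-2g_\XX$. So the constant quantity is the dimension plus twice the designed distance, and the literal wording of the statement is loose in exactly the way you noticed.
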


\begin{proof}
Since $cs > 2 g_\XX - 2$ the divisor $s D_\infty$ is non-special and $\ell ( s D_\infty ) = cs + 1 - g_\XX$, while $cs < N$ gives $\ell ( s D_\infty - D ) = 0$, so $\dim \CC_s = cs + 1 - g_\XX$. The containment is \cref{thm-dualcontaining}. For a pair $\CC^\perp \subseteq \CC$ with $\dim \CC = k$ the \textup{CSS} construction returns $[[ N, 2k - N, \geq d ( \CC ) ]]_q$, and $d ( \CC_s ) \geq N - cs$ by the argument of \cref{cor-distances}. Substituting $k$ gives the parameters, and $( 2cs + 2 - 2 g_\XX - N ) + 2 ( N - cs ) = N + 2 - 2 g_\XX$ is independent of $s$.
\end{proof}

%\subsection{Hermitian curves and their quotients}\label{ssec-hermitian}

The hypotheses \textup{(H1$'$)}--\textup{(H3$'$)} are met, with $\delta = 0$, by the Hermitian curve in its Kummer model and by every quotient of it by a subgroup of $\langle \tau \rangle$.

\begin{thm}\label{thm-hermitian}
Let $q = q_0^2$, let $n \geq 2$ divide $q_0 + 1$, and let
\[
\XX_n \; : \; y^n \; = \; - \left( x^{q_0 + 1} + 1 \right)
\]
over $\F_q$. Then $d = q_0 + 1$, $c = n$, \textup{(H1$'$)}--\textup{(H3$'$)} and \textup{(H4)} hold with $\delta = 0$, and
\[
g_\XX = \frac{(n-1)(q_0-1)}{2} , \qquad N = n \left( q_0^2 - q_0 - 1 \right) , \qquad M = q_0^2 - 2 .
\]
The curve $\XX_n$ is the quotient of the Hermitian curve $\XX_{q_0+1}$ by the subgroup of order $\frac{q_0+1}{n}$ of $\langle \tau \rangle$, all $d$ points of $\Ram$ and all $c$ points of $D_\infty$ are $\F_q$-rational, and $\XX_n$ is $\F_q$-maximal. Moreover
\[
t (s) \; = \; n \left( q_0 - 1 - \mu (s) \right) , \qquad W ( \XX_n ) \; = \; \left\{ \, s \; : \; q_0 \leq s \leq q_0^2 - 2 - q_0 \, \right\} ,
\]
nonempty for $q_0 \geq 3$, and on the window
\[
\dim \Hull ( \CC_s ) \; = \; n \, \mu (s) - q_0 - \frac{(n-1)(q_0-1)}{2} .
\]
\end{thm}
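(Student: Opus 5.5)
The plan is to verify the hypotheses \textup{(H1$'$)}--\textup{(H3$'$)} and \textup{(H4)} directly, compute the invariants from \cref{eq-genus}, and then read off $t(s)$, the window and the hull dimension from \cref{thm-splitdict}.

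\textbf{Hypotheses and invariants.} Since $n \mid q_0+1$ and $d = q_0+1$, we get $c = \gcd(n, q_0+1) = n$. Also $p \nmid q_0+1$ gives $p \nmid n$, and $n \mid q_0+1 \mid q_0^2-1$ gives \textup{(H4)}. For \textup{(H1$'$)}:
\begin{itemize}
\item The map $x \mapsto x^{q_0+1}$ is the norm $\F_q^* \to \F_{q_0}^*$, surjective with fibres of size $q_0+1$, as in \cref{rem-hermitian}.
\item Since $-1 \in \F_{q_0}^*$, the polynomial $f$ has exactly $q_0+1 = d$ roots in $\F_q$.
\item They are distinct, because $f' = -(q_0+1)x^{q_0}$ vanishes only at $0$, which is not a root.
\end{itemize}
For $\delta = 0$, take $a$ with $f(a) \neq 0$. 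Then $f(a) = -(\mathrm{N}(a)+1) \in \F_{q_0}^* = (\F_q^*)^{q_0+1}$, and this lies in $(\F_q^*)^n$ because $n \mid q_0+1$. Hence $\Delta = \emptyset$ and $E = 0$. The formulas then follow directly:
\begin{itemize}
\item \cref{eq-genus} gives $g_\XX = \frac{(n-1)q_0 + 1 - n}{2} = \frac{(n-1)(q_0-1)}{2}$.
\item \cref{lem-splitbasic} gives $N = n(q - d) = n(q_0^2 - q_0 - 1)$.
\item The definition gives $M = (q-1)n/c - 1 = q_0^2 - 2$.
\end{itemize}

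\textbf{Geometry: quotient, rationality, maximality.} Write the Hermitian curve as $Y^{q_0+1} = -(x^{q_0+1}+1)$. Setting $y = Y^{(q_0+1)/n}$ embeds $\F_q(\XX_n)$ in $\F_q(\XX_{q_0+1})$ as the fixed field of $\tau^n$, the subgroup of $\langle \tau \rangle$ of order $(q_0+1)/n$. The points $Q_a$ of $\Ram$ are rational, since they are the unique points over the rational roots. For $D_\infty$: here $n/c = 1$, so near $x = \infty$ the function $w = y/x^{d/c}$ satisfies $w^n = -1 + O(1/x)$. The $c = n$ points at infinity therefore correspond to the $n$-th roots of $-1$, and these lie in $\F_q$ because $-1 \in \F_{q_0}^* \subseteq (\F_q^*)^n$ and $\zeta_n \in \F_q$. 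Maximality follows from the standard fact that a curve covered by an $\F_q$-maximal curve is $\F_q$-maximal: the $L$-polynomial of the quotient divides that of the cover. The Hermitian curve in this Fermat form $x^{q_0+1} + Y^{q_0+1} + 1 = 0$ is maximal over $\F_{q_0^2}$.

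\textbf{Window and hull dimension.} By \cref{thm-splitdict} with $\delta = 0$,
\[
t(s) = 2g_\XX - 2 + d - n\mu(s) = (n-1)(q_0-1) + (q_0 - 1) - n\mu(s) = n(q_0 - 1 - \mu(s)).
\]
So $t(s) < 0$ exactly when $\mu(s) \geq q_0$, that is $q_0 \leq s \leq M - q_0 = q_0^2 - 2 - q_0$. This interval is nonempty iff $q_0 \leq \lfloor (q_0^2-2)/2 \rfloor$, which holds iff $q_0 \geq 3$. Finally \cref{eq-splitmain} gives
\[
\dim \Hull(\CC_s) = n\mu(s) - (q_0+1) + 1 - g_\XX,
\]
which is the stated formula.

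The only step that is not a direct substitution is the maximality claim. It relies on the descent of maximality to quotients (Lachaud/Serre) and on identifying the Kummer model with the Hermitian curve; I would cite these rather than reprove them. The rationality of $D_\infty$ is the other point that needs care.
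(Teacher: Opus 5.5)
Your proposal is correct and follows the paper's proof step for step, with one exception: the maximality of $\XX_n$. The paper does not use any descent result there. Having shown $\delta = 0$ and that all $d$ points of $\Ram$ and all $c = n$ points of $D_\infty$ are $\F_q$-rational, it simply counts
\[
\# \XX_n ( \F_q ) \; = \; N + d + c \; = \; n q_0^2 - n q_0 + q_0 + 1 \; = \; q + 1 + 2 g_\XX \, q_0 ,
\]
which is the Hasse--Weil upper bound.

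You instead invoke Lachaud's theorem, that a curve covered over $\F_q$ by an $\F_q$-maximal curve is itself maximal. You combine it with the maximality of the Fermat-form Hermitian curve $x^{q_0+1} + Y^{q_0+1} + 1 = 0$ over $\F_{q_0^2}$. Both arguments are valid, and they buy different things:
\begin{itemize}
\item Your route is more conceptual. It would apply to any $\F_q$-quotient of a maximal curve and needs no bookkeeping of rational points. But it imports two external results, and it leaves the rationality of $\Ram$ and $D_\infty$ as free-standing assertions.
\item The paper's route is self-contained. It also explains why those rationality statements appear in the theorem at all: they are exactly the inputs the point count needs.
\end{itemize}
Since you already prove every ingredient of that count ($\delta = 0$, $N = n ( q_0^2 - q_0 - 1 )$, the value of $g_\XX$, and the rationality of $\Ram$ and $D_\infty$), the direct check is a one-line replacement for the citation.
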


\begin{proof}
Write $\mathrm{N}_{q_0} (x) = x^{q_0+1}$ for the norm $\F_{q}^* \to \F_{q_0}^*$, which is surjective with kernel of order $q_0+1$. The roots of $f (x) = - ( x^{q_0+1} + 1 )$ are the solutions of $\mathrm{N}_{q_0} (x) = -1$, of which there are exactly $q_0+1$ in $\F_q$ and all are distinct, so $f$ is separable of degree $d = q_0+1$ and splits over $\F_q$: this is \textup{(H1$'$)}, and $p \nmid n$ because $n \mid q_0 + 1$ and $q_0$ is a power of $p$. As $n \mid q_0+1 \mid q_0^2 - 1$ we have \textup{(H4)}, and $c = \gcd (n, q_0+1) = n$. For $a \in \F_q$ with $f (a) \neq 0$ one has $f (a) \in \F_{q_0}^* = ( \F_q^* )^{q_0+1} \subseteq ( \F_q^* )^n$, the containment because $n \mid q_0+1$; hence $\Delta = \emptyset$ and $\delta = 0$. The genus is \cref{eq-genus} with $c = n$, and $N$ and $M$ are \cref{lem-splitbasic} and the definition of $M$. The map $(x,y) \mapsto \left( x, y^{(q_0+1)/n} \right)$ carries $\XX_{q_0+1}$ onto $\XX_n$ and identifies the latter with the quotient by the subgroup of $\langle \tau \rangle$ of order $\frac{q_0+1}{n}$. The points of $\Ram$ are rational because the roots of $f$ are; the $c = n$ points of $D_\infty$ correspond to the $n$-th roots of $-1$, which lie in $\F_q$ because $-1 \in \F_{q_0}^* = ( \F_q^* )^{q_0+1} \subseteq ( \F_q^* )^n$. Hence
\[
\# \XX_n ( \F_q ) \; = \; N + d + c \; = \; n q_0^2 - n q_0 + q_0 + 1 \; = \; q + 1 + 2 g_\XX \, q_0 ,
\]
so $\XX_n$ is maximal. Finally $t (s) = 2 g_\XX - 2 - n \mu (s) + q_0 + 1 = n ( q_0 - 1 ) - n \mu (s)$ by \cref{thm-splitdict} and the value of $g_\XX$, so $t (s) < 0$ if and only if $\mu (s) \geq q_0$, which is the stated interval; it is nonempty exactly when $q_0 \leq q_0^2 - 2 - q_0$, that is $q_0 \geq 3$. The value of the hull on the window is \cref{eq-splitmain} with $d + n \delta = q_0 + 1$.
\end{proof}

For $q_0 = 3$ the theorem gives two curves, $n = 2$ and $n = 4$, the second being the Hermitian curve itself. \cref{tab-hermitian} records their hull profiles, computed by \cref{eq-hull-rank}, against \cref{thm-splitdict}.

\begin{table}[ht]
\caption{The maximal curves $\XX_n : y^n = - ( x^4 + 1 )$ over $\F_9$, $M = 7$. Degrees with $2s \geq M$ are dual-containing by \cref{thm-dualcontaining}, and there $\dim \Hull ( \CC_s ) = N - \dim \CC_s$.}\label{tab-hermitian}
\begin{tabular}{lrrrrrrr}
\toprule
\multicolumn{8}{l}{$\XX_4$ Hermitian, \; $g_\XX = 3$, \; $N = 20$, \; $W = \{ 3,4 \}$} \\
$s$ & $1$ & $2$ & $3$ & $4$ & $5$ & $6$ & $7$ \\
\midrule
$\nu (s)$                  & $0$ & $1$ & $2$ & $2$ & $1$ & $0$ & $-1$ \\
$t (s)$                    & $4$ & $0$ & $-4$ & $-4$ & $0$ & $4$ & $8$ \\
$\Phi ( \nu (s) )$         & $1$ & $3$ & $6$ & $6$ & $3$ & $1$ & $0$ \\
$\dim \CC_s$               & $3$ & $6$ & $10$ & $14$ & $17$ & $19$ & $20$ \\
$\dim \Hull ( \CC_s )$     & $2$ & $4$ & $6$ & $6$ & $3$ & $1$ & $0$ \\
$\varepsilon ( G_s, A_s )$ & $1$ & $1$ & $0$ & $0$ & $0$ & $0$ & $0$ \\
\midrule
\multicolumn{8}{l}{$\XX_2$, \; $g_\XX = 1$, \; $N = 10$, \; $W = \{ 3,4 \}$} \\
$s$ & $1$ & $2$ & $3$ & $4$ & $5$ & $6$ & $7$ \\
\midrule
$\nu (s)$                  & $-1$ & $0$ & $1$ & $1$ & $0$ & $-1$ & $-2$ \\
$t (s)$                    & $2$ & $0$ & $-2$ & $-2$ & $0$ & $2$ & $4$ \\
$\Phi ( \nu (s) )$         & $0$ & $1$ & $2$ & $2$ & $1$ & $0$ & $0$ \\
$\dim \CC_s$               & $2$ & $4$ & $6$ & $8$ & $9$ & $10$ & $10$ \\
$\dim \Hull ( \CC_s )$     & $1$ & $2$ & $2$ & $2$ & $1$ & $0$ & $0$ \\
$\varepsilon ( G_s, A_s )$ & $1$ & $1$ & $0$ & $0$ & $0$ & $0$ & $0$ \\
\bottomrule
\end{tabular}
\end{table}

Both profiles are asymmetric about $M/2 = 7/2$, and by \cref{thm-splitdict} the asymmetry is excess: the meet, the join and, by \cref{lem-splitclasses}, the class of $K - G_s \wedge A_s$ agree at $s$ and at $M-s$, while $\varepsilon = 1$ at $s = 1, 2$ and $\varepsilon = 0$ at $s = 5, 6$. The degrees $s = 2$ and $s = 5$ have $t (s) = 0$ and lie in the detection band; the class they present is trivial on both curves by \cref{lem-splitclasses}, and what separates the two degrees is again decomposability alone. The bound $\varepsilon ( G_s, A_s ) \leq \Phi ( \gamma - \nu (s) )$ of \cref{lem-splitclasses} has $\gamma = 1$ for $\XX_4$ and $\gamma = 0$ for $\XX_2$, and is attained at $s = 2$ on both.

By \cref{cor-splitcss} the useful dual-containing degrees for $\XX_4$ are those with $4 \leq s < 5$, giving the single code $[[ 20, 8, \geq 4 ]]_9$. The construction is more productive as $q_0$ grows: for $q_0 = 4$ and $n = 5$ one has $g_\XX = 6$, $N = 55$, $M = 14$, and the degrees $s = 7, \dots, 10$ return
\[
[[ 55, 5, \geq 20 ]]_{16} , \qquad [[ 55, 15, \geq 15 ]]_{16} , \qquad [[ 55, 25, \geq 10 ]]_{16} , \qquad [[ 55, 35, \geq 5 ]]_{16} ,
\]
a family on a maximal curve, of constant $k + 2d$, and with no entanglement. This is the mirror of the regime described at the end of \cref{sec-comput}, where the \textup{CSS} construction was available in the lower half of the range and only on the totally split locus.

%\subsection{The split locus}\label{ssec-splitlocus}

\cref{thm-hermitian} produces curves satisfying \textup{(H1$'$)} with $\delta = 0$, which is the analogue here of \cref{def-split} and is again the condition under which \cref{eq-splitmain} is sharpest. We do not know how far the list extends.

\begin{prob}\label{prob-splitlocus}
Determine the pairs $(n, f)$ over $\F_q$ satisfying \textup{(H1$'$)} and \textup{(H4)} for which $\delta = 0$, that is, for which $f$ splits into $d$ distinct linear factors over $\F_q$ and $f (a) \in ( \F_q^* )^n$ for every $a \in \F_q$ outside the roots of $f$. Are they all quotients of Hermitian curves, and are they all maximal?
\end{prob}

A necessary condition is immediate: the values of $f$ on $\F_q$ lie in $\{ 0 \} \cup ( \F_q^* )^n$, a set of $1 + \frac{q-1}{n}$ elements, and $f$ takes each value at most $d$ times, so $q \leq d \left( 1 + \frac{q-1}{n} \right)$, that is
\[
d \; \geq \; \frac{qn}{q + n - 1} .
\]
For the curves of \cref{thm-hermitian} this reads $q_0 + 1 \geq q_0$, so they sit just above the bound; a curve meeting it with equality would have $f$ taking every value of $( \F_q^* )^n$ exactly $d$ times, and $f$ would be a planar-type map onto the $n$-th powers. Whether that forces the Hermitian shape is the content of \cref{prob-splitlocus}.

\section*{Acknowledgments}
The authors used large language models (Claude and ChatGPT) for assistance with drafting, editing, and checking the exposition. All mathematical content, proofs, and final wording are the responsibility of the authors.

%--------------------------------------------------------------------------
\bibliographystyle{amsplain}
\bibliography{sh-140}

\end{document}